\tikzset{cross/.style={cross out, draw, 
minimum size=2*(#1-\pgflinewidth), 
inner sep=0pt, outer sep=0pt}}
\newtheorem{theorem}{Theorem}[section]
\newtheorem{thmx}{Theorem}
\newtheorem{proposition}[theorem]{Proposition}
\newtheorem{lemma}[theorem]{Lemma}
\newtheorem*{namedthm}{\namedthmname}
\newtheorem*{Lemma-drift-union}{Lemma \ref{drift-union} b)}
\theoremstyle{definition}
\newtheorem{definition}[theorem]{Definition}
\newtheorem*{thA'}{Theorem $\bf A'$}
\newtheorem*{thB'}{Theorem $\bf B'$}
\newtheorem{example}{Example}
\newtheorem*{remark}{Remark}
\DeclareMathSymbol{\lsb@l}{\mathalpha}{letters}{`l}
\DeclareMathOperator{\GL}{\mathrm{GL}}
\DeclareMathOperator{\SL}{\mathrm{SL}}
\DeclareMathOperator{\Aut}{Aut}
\DeclareMathOperator{\Stab}{Stab}
\DeclareMathOperator{\Ad}{Ad}
\DeclareMathOperator{\card}{card}
\newcommand{\bracket}[1]{\langle#1\rangle}
\newcommand{\dd}{\mathrm{d}}
\newcommand{\R}{\mathbb{R}}
\newcommand{\Z}{\mathbb{Z}}
\newcommand{\N}{\mathbb{N}}
\newcommand{\Q}{\mathbb{Q}}
\newcommand{\PP}{\mathbb{P}}
\newcommand{\E}{\mathbb{E}}
\newcommand{\1}{\mathbbm{1}}
\newcommand{\cS}{\mathcal{S}}
\newcommand{\cA}{\mathcal{A}}
\newcommand{\cF}{\mathcal{F}}
\newcommand{\cT}{\mathcal{T}}
\newcommand{\ka}{\mathfrak{a}}
\newcommand{\kl}{\mathfrak{l}}
\newcommand{\kt}{\mathfrak{t}}
\newcommand{\ks}{\mathfrak{s}}
\newcommand{\kr}{\mathfrak{r}}
\newcommand{\g}{\mathfrak{g}}
\newcommand{\h}{\mathfrak{h}}
\newcommand{\eps}{\varepsilon}
\newcommand{\leb}{\text{leb}}
\newcommand{\abs}[1]{\lvert#1\rvert}    % valeur absolue
\newcommand{\Abs}[1]{\left\lvert#1\right\rvert}    % valeur absolue
\newcommand{\norm}[1]{\lVert#1\rVert}   % norme
\newcommand{\cSS}{\cS_{\Omega}(\Gamma)}
\newcounter{namedthm}
\newenvironment{named}[1]
  {\def\namedthmname{#1}%
   \refstepcounter{namedthm}%
   \namedthm\def\@currentlabel{#1}}
  {\endnamedthm}
\title{Random walks with bounded first moment on finite-volume spaces}
\author{Timothée Bénard\thanks{The first author has received funding from the European Research
Council (ERC) under the European Union’s Horizon 2020 research and
innovation programme (grant agreement No. 803711).}
\hspace{1pt}
and Nicolas de Saxcé
}
\date{}
\begin{document}

\maketitle

\large
\begin{abstract} 
Let $G$ be a real Lie group, $\Lambda\leq G$ a lattice,  and $\Omega=G/\Lambda$. We study the equidistribution properties of the left random walk on $\Omega$ induced by a probability measure $\mu$ on $G$. It is assumed that $\mu$ has a finite first moment,  and that the Zariski closure of the group generated by the support of $\mu$ in the adjoint representation is semisimple without compact factors.  We show that for every starting point $x\in \Omega$, the $\mu$-walk with origin $x$ has no escape of mass, and equidistributes in Cesàro averages toward some homogeneous measure. This  extends several fundamental results due to Benoist-Quint and Eskin-Margulis for walks with finite exponential moment. 
\end{abstract}

\tableofcontents

\bigskip

\bigskip

\pagebreak

\section{Introduction}

A homogeneous Markov chain on a finite-volume real homogeneous space is formally incarnated by  a triple  $(G, \Lambda, \mu)$ where $G$ is a real Lie group, $\Lambda$ a discrete subgroup of finite covolume  in $G$, and $\mu$ a Borel probability measure on $G$. The chain in question then corresponds to the left $\mu$-random walk on the quotient $\Omega=G/\Lambda$. In other words, the transition law at a point $x\in \Omega$ is given by the convolution $\mu\ast \delta_{x}$, image of $\mu$ under $g\mapsto gx$. 

In the last 20 years, the study of walks on finite-volume spaces has known spectacular advances, which were achieved in analogy with Ratner's theorems describing the dynamics of Ad-unipotent flows on $\Omega$.
The first milestone was set by Eskin-Margulis who proved that the $n$-th step distribution of a homogeneous chain essentially remains in a compact set \cite{EskMar04}. A few years later, Benoist-Quint  managed to classify all the $\mu$-invariant probability measures \cite{bq1, bq2}, and extrapolated  in  \cite{bq3}  that almost every trajectory of the random walk equidistributes in some finite-volume  homogeneous subspace.
However,  all these statements require stringent moment assumptions on the measure $\mu$, such as \emph{compact support} or \emph{finite exponential moment}, the latter meaning that for some $\alpha>0$, we have
\[
\int_{G} \norm{\Ad g}^\alpha \, \dd\mu(g) <\infty.
\]
In a recent paper, Eskin and Lindenstrauss \cite{el_rw} extended the techniques developed by Benoist and Quint, and showed that their measure classification was still valid in the case where $\mu$ only has a \emph{finite first moment}.
Our goal is to weaken in the same way the moment assumptions in the Eskin-Margulis recurrence theorem and in the Benoist-Quint equidistribution theorems.
This answers a question formulated by Benoist-Quint in the $10^{\text{th}}$ Takagi Lectures \cite[Question~2]{bq_intro}.

\subsection{Main results}

Let $G$ be a real Lie group, $\Lambda$ a lattice in $G$, and set $\Omega=G/\Lambda$. We fix a Borel probability measure $\mu$  on $G$ and denote by $\Gamma$ the semigroup generated by the support of $\mu$. The algebraic group  generated by its adjoint representation is denoted by $H=\overline{\Ad \Gamma}^Z$, and we call $H^{nc}$ its non-compact part, defined as the smallest normal algebraic subgroup  of $H$ such that $H/H^{nc}$ is compact.
All the theorems to follow  are presented under the next two assumptions.

\begin{enumerate}

\item The non-compact part $H^{nc}$ of $H$ is semisimple. 

\item The measure $\mu$ has a \emph{finite first moment}: $$\int_{G}\log \norm{\Ad g} \, \dd\mu(g) <\infty$$

\end{enumerate}

To state our results, we need to introduce the notion of homogeneous subspace. 

\begin{definition}
 A closed subset $Y$ of $\Omega$ is \emph{homogeneous} if its stabilizer $G_{Y}=\{g\in G, \,gY=Y\}$ acts transitively on $Y$. We add that $Y$ has \emph{finite volume} if the action of $G_{Y}$ on $Y$ preserves a Borel probability measure on $Y$. Such a measure is then unique and denoted by $\nu_{Y}$. If the semigroup $\Gamma$ is included in $G_{Y}$ (and acts ergodically on $(Y,\nu_{Y})$), we say that  $Y$ is \emph{$\Gamma$-invariant} (and $\Gamma$-\emph{ergodic}). 

\end{definition}

We denote by $\cS_\Omega(\Gamma)$  the set of $\Gamma$-invariant ergodic finite-volume closed homogeneous subsets of $\Omega$.

\bigskip

Our first result states that the $\mu$-walk on $\Omega$ essentially evolves in a compact subset.
Such statements originate in the work of Eskin-Margulis-Mozes~\cite{EskMarMoz98} on the quantitative Oppenheim conjecture and can be seen as an analog for random walks of the Dani-Margulis recurrence theorem on Ad-unipotent flows \cite{Dan}.

\begin{thmx}[Non-escape at infinity]
\label{thA}
For every compact set $K\subset\Omega$ and every $\eps>0$, there exists a compact set $K'\subset \Omega$ such that for every $x\in K$,
\begin{enumerate}[label=(\roman*)]
\item for every $n\geq 0$, $(\mu^{*n}*\delta_x)(K') >1-\eps$;
\item for $\mu^{\otimes\N^*}$-almost every instructions $(g_i)_{i\geq 1}$,
\[
\liminf_{n\to+\infty}\frac{1}{n}\card\{k\in\{1,\dots,n\}\ |\ g_k\dots g_1x \in K'\} >1- \eps.
\]
\end{enumerate}
\end{thmx}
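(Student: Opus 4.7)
The plan is to establish a Foster-Lyapunov drift inequality for a proper function on $\Omega$ and then deduce both statements from it by standard Markov chain arguments. Let $P$ denote the Markov operator $(Pf)(x) = \int f(gx)\,\dd\mu(g)$ of the walk. The aim is to construct a lower-semicontinuous proper function $u : \Omega \to [0,+\infty]$, together with constants $a \in (0,1)$, $b \ge 0$ and an integer $N \ge 1$, such that
\[
(P^N u)(x) \le a\, u(x) + b \qquad \text{for every } x \in \Omega,
\]
together with a one-step bound $(Pu)(x) \le C(u(x)+1)$, which follows directly from the finite first moment assumption. Iterating these two inequalities yields $\sup_{n\ge 0}(P^n u)(x) \le M(x)$ with $M$ bounded on every compact set, whence part (i) via Markov's inequality applied to $K' = \{u \le R\}$ for $R$ large enough. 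For (ii), the same bound combined with Fatou's lemma forces $\liminf_n \frac{1}{n}\sum_{k=1}^n u(g_k\dots g_1 x)$ to be $\mu^{\otimes \N^*}$-almost surely finite and dominated by $M(x)$; properness of $u$ converts this, via a further Markov inequality, into the sought lower bound for the frequency of visits to $K'$.

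The core of the proof is therefore the construction of $u$. Following the blueprint of Dani-Margulis and Eskin-Margulis, I would first reduce (using assumption (1) on $H^{nc}$) to the case where $G$ is semisimple and $\Lambda$ is an irreducible arithmetic lattice, then describe the cusps of $\Omega$ via the reduction theory of $\Lambda$ and work with the inverse of the shortest integral vector in the $\Ad\Gamma$-invariant rational subrepresentations of $\Ad G$ attached to this reduction theory. With an exponential moment, Eskin and Margulis use a polynomial height $\alpha^{-\delta}$ for small $\delta>0$; under the sole finite first moment, one is forced to work with a \emph{logarithmic} Margulis function
\[
u(x) = \max\!\Bigl(0,\, \log \tfrac{1}{\alpha(x)}\Bigr) + \text{const},
\]
where $\alpha(x)$ denotes the minimum of the norms of nonzero integral vectors in a bounded family of such invariant subrepresentations. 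The reason only a logarithmic height is feasible is that $\bigl|\log \norm{gv} - \log \norm{v}\bigr| \le \log\norm{\Ad g}$, which is $\mu$-integrable by assumption (2); any polynomial growth in $\alpha^{-1}$ would require a corresponding exponential moment on $\norm{\Ad g}$.

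The main obstacle is to prove the drift inequality for this $u$. Deep in a cusp, $\alpha(x)$ is realised by a unique short vector $v$ in some $\Ad\Gamma$-invariant subspace $V$, so that $u(g_N\dots g_1 x) \approx -\log\norm{g_N\dots g_1 v}$. By assumption (1), $H^{nc}$ is semisimple without compact factor, so on every nontrivial $\Ad\Gamma$-irreducible subquotient the top Lyapunov exponent $\lambda_1$ is strictly positive; the finite first moment ensures that the Furstenberg-Kifer-Hennion-Raugi theory applies and that the almost sure convergence $\tfrac{1}{n}\log\norm{g_n\dots g_1 v} \to \lambda_1$ upgrades to $L^1$ convergence, uniformly over unit vectors $v$. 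For $N$ large one then has $\E\log\norm{g_N\dots g_1 v} \ge \tfrac{1}{2}\lambda_1 N$ uniformly in $v$, which produces the contracting factor $a<1$ in the drift. Two delicate points remain. The first is the classical double-counting issue: when several candidate short vectors from different invariant subspaces compete to realise $\alpha(x)$, one must control the evolution of their \emph{minimum} rather than of any single one. Following Eskin-Margulis, this is resolved by replacing $\alpha^{-1}$ by a sum over the boundedly many competitors (here, with a logarithmic height, by a soft-maximum), using that the family of rational subspaces to be considered is uniformly bounded under the adjoint action. The second, specific to the first-moment regime, is obtaining the uniformity in $v$ of the $L^1$ convergence above; this is the point where the recent techniques of Eskin-Lindenstrauss, based on truncating $\log\norm{\Ad g}$ at a slowly growing scale and applying an ergodic theorem along the walk, are expected to enter crucially.
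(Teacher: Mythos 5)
Your sketch correctly identifies the logarithmic Benoist-Quint height as the right object, the role of the positive top Lyapunov exponent, and the double-counting issue. However, the analytical backbone you propose is broken, and fixing it is precisely the main abstract content of the paper.

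The geometric Foster inequality $(P^Nu)(x)\le a\,u(x)+b$ with $a<1$ \emph{cannot hold} for a logarithmic height. With $u(x)\approx\log\frac{1}{\alpha(x)}$ one has only the additive control $|u(gx)-u(x)|\le C\log\|\Ad g\|$, which is $L^1(\mu)$. Taking expectations gives $\E[u(g_N\cdots g_1x)]\ge u(x)-CN\int\log\|\Ad g\|\,\dd\mu(g)$, and for $u(x)=R$ large this contradicts $aR+b$ as soon as $(1-a)R>b+CN\int\log\|\Ad g\|\,\dd\mu$. Geometric contraction is the regime of Eskin--Margulis's polynomial height $\alpha^{-\delta}$ under an exponential moment; with a logarithmic height and finite first moment the correct drift is \emph{additive}: $\E_x[u(X_N)]\le u(x)-\lambda$ for $u(x)$ large. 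Once the drift is additive, the claim that ``iterating these two inequalities yields $\sup_n(P^nu)(x)\le M(x)$'' no longer follows; one needs a renewal argument over excursions away from the sublevel set.

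This brings up the second, deeper gap. Additive drift together with increments bounded uniformly in $L^1$ is \emph{provably insufficient} for non-escape of mass or of empirical measures: the paper constructs explicit Markov chains on $[0,\infty)$ satisfying $(\mathbf D)$ and $(\mathbf M_0)$ for which both conclusions fail (Examples \ref{counterex-L1} and \ref{counterex2-L1}). So one cannot run the Harris/Foster machinery on $L^1$-bounded increments as you implicitly propose. The paper's fix is the \emph{stochastic dominance} condition of Theorem \ref{thD}: the positive increments from every state are dominated by one fixed integrable law. This holds here precisely because the bound $u(gx)-u(x)\le C\log\|\Ad g\|$ is uniform in $x$ (not merely $L^1$-uniform in norm). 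From this the paper derives uniformly summable tail bounds on the return time to the sublevel set (Lemma \ref{return-times}), which is what actually powers the renewal estimate and the two conclusions. You mention the uniform bound $|\log\|gv\|-\log\|v\||\le\log\|\Ad g\|$, so the raw material is there, but you do not exploit its stochastic-domination content and instead fall back on a moment-based drift criterion that the paper shows to be false in this generality. Finally, the paper packages the homogeneous verification in a ``controlled drift'' framework (Definition \ref{cd}, Proposition \ref{cd-rec}) and then uses the corrected quasi-norm $|\cdot|_A$ and the weak submodularity Lemma \ref{smfaible} to handle the double-counting issue; your appeal to a ``soft-maximum'' points in roughly the right direction but would need to be made precise along those lines.
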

 Conclusion (i) means the mass of the $n$-th step distribution of the walk does not escape at infinity. Conclusion (ii) expresses some positive recurrence of the walk's trajectories. 
The result actually holds under the slightly weaker assumption that the image measure $(\Ad_{\ks})_{*}\mu$ of $\mu$ under the adjoint representation on the largest semisimple  quotient  of $\g$ without compact factors generates an algebraic group with semisimple non-compact part (see \ref{thA'} in Section \ref{Sec4}). This theorem generalizes \cite{bq_fv} and \cite{EskMar04}, which assume that $\mu$ has a finite exponential moment. 

\bigskip

Our second result states that the $\mu$-walk  on $\Omega$ does not accumulate on  a $\mu$-invariant homogeneous subspace, unless it is trapped inside it.

\begin{thmx}[Unstability of  invariant homogeneous subspaces]
  \label{thB}

Let $Y\in\cS_\Omega(\Gamma)$ and consider  a compact subset $K\subseteq \Omega\smallsetminus Y$.
For every $\eps>0$, there exists a neighborhood $O'$ of $Y$ in $\Omega$  such that for all $x\in K$, 
\begin{enumerate}[label=(\roman*)]
\item \label{fer} for every $n\geq 0$, $(\mu^{*n}*\delta_x)(O')<\eps$ ;
\item\label{ces} for $\mu^{\otimes\N^*}$-almost every instructions $(g_i)_{i\geq 1}$,
\[
\limsup_{n\to+\infty}\frac{1}{n}\card\{k\in\{1,\dots,n\}\ |\ g_k\dots g_1x\in O'\}<\eps.
\]
\end{enumerate}
\end{thmx}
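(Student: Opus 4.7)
The plan is to construct a Margulis function: a proper lower semi-continuous $u: \Omega\setminus Y \to [0,+\infty)$ that blows up as $x\to Y$, is bounded on every compact subset of $\Omega\setminus Y$, and satisfies a drift inequality of the form
\[
(P_\mu u)(x) \;=\; \int_G u(gx)\, \dd\mu(g) \;\leq\; a\, u(x) + b
\]
with $a < 1$. Once such a $u$ is available, both conclusions of Theorem~\ref{thB} follow by standard Foster--Lyapunov arguments: iteration yields $\E[u(g_n\cdots g_1 x)] \leq a^n u(x) + b/(1-a)$, hence a bound uniform in $n$ for $x$ in the compact set $K$. Any small enough neighborhood $O'$ of $Y$ is contained in a super-level set $\{u > M\}$, so Markov's inequality produces (i); and a maximal inequality for the resulting positive super-martingale, combined with the recurrence to compact sets provided by Theorem~\ref{thA}, yields the almost-sure frequency bound (ii).

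The construction of $u$ combines a local analysis near $Y$ with the recurrence provided by Theorem~\ref{thA}. Near $Y$, exploit the homogeneity: fix a linear complement $\mathfrak{m}\subset\g$ to the Lie algebra of $G_Y$, so that $(v,y)\mapsto \exp(v)y$ is a diffeomorphism from a neighborhood of $\{0\}\times Y$ in $\mathfrak{m}\times Y$ onto a tubular neighborhood of $Y$ in $\Omega$. Since $\Gamma\subseteq G_Y$, for $g\in\Gamma$ we have
\[
g\cdot\exp(v)\,y \;=\; \exp\bigl(\Ad(g)v\bigr)\cdot g y \;+\; O(\|v\|^2),
\]
so to first order the transverse coordinate $v$ evolves by the adjoint cocycle $\Ad(g)|_\mathfrak{m}$ while $y$ performs the $\mu$-walk on $(Y,\nu_Y)$. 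The assumption that $H^{nc}$ is semisimple without compact factors, together with the maximality of $Y$ as a $\Gamma$-invariant homogeneous subset (any sub-representation of $\Ad|_\mathfrak{m}$ on which $\Gamma$ acted trivially would enlarge $Y$), combined with Furstenberg--Guivarc'h theory under the first-moment hypothesis, produces a continuous norm-like function $\psi$ on $\mathfrak{m}$ and a constant $\lambda>0$ such that
\[
\int_G \log\psi\bigl(\Ad(g)v\bigr)\,\dd\mu(g) \;\geq\; \log\psi(v) + \lambda
\]
for every nonzero $v$. Setting $u(x) := -\log\psi(v)$ when $x=\exp(v)y$ lies in this tubular neighborhood, and extending by a suitable globalization, gives the required drift inequality near $Y$; far from $Y$ the function $u$ is bounded and the defect is absorbed into $b$.

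The main obstacle will be carrying out this construction under only the finite first-moment hypothesis, rather than a finite exponential moment. The earlier works of Benoist--Quint and Eskin--Margulis use polynomial Margulis functions $\sim d(x,Y)^{-\alpha}$, whose drift analysis relies on controlling $\E[\|\Ad g\|^\alpha]$, which may be infinite here. The remedy, in the spirit of Eskin--Lindenstrauss~\cite{el_rw}, is precisely to use the logarithmic gauge $\psi$ above, so that the required moment $\E[\log\|\Ad g\|]$ is finite by assumption. Establishing the expansion estimate then becomes a delicate application of the law of large numbers for the adjoint cocycle under only a first moment; the hardest step will be to quantify and absorb the $O(\|v\|^2)$ linearization error uniformly in $y\in Y$, for which one exploits the ergodicity of $(Y,\nu_Y,\mu)$ and the positive recurrence of Theorem~\ref{thA} to ensure that the base point $y$ spends a definite fraction of its time in a compact region of $Y$ where this error term is controlled.
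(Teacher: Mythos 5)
There is a genuine gap at the structural level. Your plan rests on a Foster--Lyapunov contraction inequality $P_\mu u\leq a\,u+b$ with $a<1$, and on gluing a local Margulis function near $Y$ with ``far from $Y$ the function $u$ is bounded and the defect is absorbed into $b$.'' This is essentially the strategy of \cite[Section~6]{bq2}, and the paper explicitly explains why it cannot prove item~(i) when $\Omega$ is non-compact: if the walk leaves the tubular neighbourhood, nothing in the local drift controls how close to $Y$ it re-enters, so the ``defect'' one would absorb into $b$ is unbounded, and there is no way to read off a statement about the $n$-th step distribution from an argument that is controlled only inside a compact set. In addition, the contraction form $P_\mu u\leq a\,u+b$ with $a<1$ is incompatible with the logarithmic gauge you correctly identify as the only one permitted by a first-moment hypothesis: with $u\approx-\log\psi$, the drift you derive is additive, $\E[u(g\cdot)]\leq u-\lambda$, and under a mere first moment an additive drift is \emph{not} enough for (i) and (ii) (see the counterexamples in Section~\ref{Sec2} of the paper); one needs the stochastic-dominance condition $(\mathbf{SD})$ and Theorem~\ref{thD}.

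What the paper actually does is: (a) construct a \emph{local} drift function near $Y$ (your transverse coordinate idea is close, but the transverse direction must be a complement $\kt$ to $\kl+\ks$, not to $\ks$ alone, because the centraliser direction $\kl$ has zero Lyapunov exponent and is not expanded; accordingly the function drifts away from $UY$ for a small $L$-neighbourhood $U$, not from $Y$ itself); (b) construct, independently, the Benoist--Quint drift function $f_0$ controlling escape to infinity (Theorem~\ref{thA}); and (c) \emph{glue} the two into a single global drift function $f=c_x f_0 + f_1$ with a carefully designed interpolation coefficient $c_x$, so that the ``controlled drift'' conditions (probable decrease and log-Lipschitz control of increments) hold on all of $\Omega$. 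Only then do the recurrence results of Section~\ref{Sec2}, via stochastic dominance, give both (i) and (ii). Your sketch is missing this gluing step entirely, and it is precisely the new ingredient that makes (i) possible in the non-compact case. Finally, the paper avoids the $O(\|v\|^2)$ error you worry about by using the transverse-coordinate diffeomorphism of Lemma~\ref{transverse} directly rather than a first-order linearisation, so that the cocycle acting on the transverse coordinate is exactly $\Ad(g)|_{\kt}$ inside the tubular neighbourhood, with no error term to absorb.
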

For the purpose of studying the equidistribution of $\mu$-trajectories,  we prove a more general version (\ref{thB'}) where $Y$ is replaced by its orbit $L_{0}Y$ under a compact subset $L_{0}$ of the centralizer of $H^{nc}$ in $G$.
Conclusion \ref{ces} is obtained in \cite{bq3} under the assumption that $\mu$ has a finite exponential moment. If the space $\Omega=G/\Lambda$ is compact, item~\ref{fer} can be readily deduced from the arguments given in \cite[Section~6]{bq2}, but it appears to be new when $\Omega$ is not compact, even under exponential moments assumptions. 

\bigskip
Our third result states that  each $\mu$-walk trajectory on $\Omega$ equidistributes in its closure toward some homogeneous measure. It can be seen as an analog for random walks  of Ratner's equidistribution theorem  \cite{Rat} for Ad-unipotent flows on $\Omega$. As observed in \cite{bq2, el_rw}, the assumption on $H$ must be strengthened in order to guarantee homogeneity.
We ask that $H$ be Zariski connected semisimple without compact factors, or equivalently $H=H^{nc}$ semisimple.

\begin{thmx}[Equidistribution in Cesàro-averages]
\label{thC}
Suppose as well $H=H^{nc}$. For every $x\in\Omega$, we have:
\begin{enumerate}[label=(\roman*)]
\item \label{oc} The orbit closure $Y=\overline{\Gamma\cdot x}\subset\Omega$ is a $\Gamma$-invariant ergodic finite-volume closed homogeneous subset.
\item \label{ce} The sequence of measures $(\frac{1}{n}\sum_{k=0}^{n-1}\mu^{*k}*\delta_x)_{n\geq 1}$ converges to $\nu_Y$ in the weak-$\ast$  topology.
\item \label{tr} For $\mu^{\otimes \N^*}$-almost every instructions $(g_i)_{i\geq 1}$, the sequence of empirical measures $(\frac{1}{n}\sum_{k=0}^{n-1}\delta_{g_k\dots g_1 x})_{n\geq 1}$ converges to $\nu_Y$ in the weak-$\ast$  topology.
\end{enumerate}
\end{thmx}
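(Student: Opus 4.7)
The plan is to combine Theorems~\ref{thA} and~\ref{thB} with the Eskin--Lindenstrauss measure classification from \cite{el_rw} to identify the Cesàro limits of the walk from $x$ as $\nu_Y$ with $Y=\overline{\Gamma\cdot x}$, and then deduce the almost-sure statement (iii) via a Breiman/skew-product argument.

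Fix $x\in\Omega$ and set $\nu_n=\frac{1}{n}\sum_{k=0}^{n-1}\mu^{*k}*\delta_x$. Each $\mu^{*k}*\delta_x$ is supported in $(\Gamma\cdot x)\cup\{x\}\subseteq Y$, hence so is $\nu_n$. Theorem~\ref{thA}(i) applied to $K=\{x\}$ yields, for every $\eps>0$, a compact $K'\subset\Omega$ with $\nu_n(K')>1-\eps$; the sequence $(\nu_n)$ is therefore tight and every weak-$*$ accumulation point $\nu_\infty$ is a probability measure supported in $Y$. The telescoping identity $\mu*\nu_n-\nu_n=\frac{1}{n}(\mu^{*n}*\delta_x-\delta_x)\to 0$ shows $\nu_\infty$ is $\mu$-stationary. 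By the Eskin--Lindenstrauss classification \cite{el_rw}, $\nu_\infty=\sum_{Z\in\cS_\Omega(\Gamma)}c_Z\,\nu_Z$ over the (at most countable) set $\cS_\Omega(\Gamma)$. For any $Z_0\in\cS_\Omega(\Gamma)$ with $x\notin Z_0$, Theorem~\ref{thB}(i) applied to $K=\{x\}$ gives, for every $\eps>0$, an open neighborhood $O'$ of $Z_0$ with $\mu^{*k}*\delta_x(O')<\eps$ for all $k$; averaging and using Portmanteau yields $\nu_\infty(Z_0)\leq\nu_\infty(O')\leq\eps$, hence $c_{Z_0}=0$. Thus $c_Z>0$ forces $x\in Z$, so by $\Gamma$-invariance $Y\subseteq Z$; conversely $\Supp\nu_Z=Z\subseteq\Supp\nu_\infty\subseteq Y$, forcing $Z=Y$. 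We conclude $\nu_\infty=\nu_Y$, which proves $Y\in\cS_\Omega(\Gamma)$ (establishing (i)), and since every accumulation point of $(\nu_n)$ equals $\nu_Y$, the sequence converges weakly to $\nu_Y$ (proving (ii)).

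For (iii), the skew-product $T\colon(\bg,y)\mapsto(\sigma\bg,g_1y)$ on $G^{\N^*}\times Y$ preserves the $T$-ergodic probability $\mu^{\otimes\N^*}\otimes\nu_Y$ (ergodicity coming from $\Gamma$-ergodicity of $\nu_Y$), so Birkhoff's theorem yields (iii) for $\nu_Y$-almost every starting point $y\in Y$. The main obstacle is to propagate this from $\nu_Y$-almost every $y$ to \emph{every} $x\in\Omega$: Theorem~\ref{thA}(ii) ensures that the empirical measures along the walk from $x$ are $\mu^{\otimes\N^*}$-a.s.~tight, so their only accumulation points are probability measures; one must then identify any such accumulation point with $\nu_Y$ by a martingale or Breiman/Kakutani-type argument, comparing the trajectory from $x$ to those from generic points it visits, and exploiting the positive Cesàro-density recurrence from Theorem~\ref{thA}(ii) in place of the exponential-moment tools used in \cite{bq3}.
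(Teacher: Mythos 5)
Your argument for (ii) has a real gap at the identification step. You write the stationary limit as a countable sum $\nu_\infty=\sum_{Z\in\cS_\Omega(\Gamma)}c_Z\,\nu_Z$ ``over the (at most countable) set $\cS_\Omega(\Gamma)$,'' but $\cS_\Omega(\Gamma)$ is not countable in general. What the paper actually proves (Lemma~\ref{countable}, adapted from \cite[Proposition~2.1]{bq3}) is that $\cS_\Omega(\Gamma)$ is a \emph{countable union of $L$-orbits}, where $L$ is the centralizer of $H^{nc}$ in $G$. Consequently the ergodic decomposition of $\nu_\infty$ only regroups into countably many pieces $\nu_i$ each supported on some $L$-saturated set $LY_i$; within one $L$-orbit it may charge uncountably many $Z$'s. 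Applying Theorem~\ref{thB} to one $Z_0$ at a time gives $\nu_\infty(Z_0)=0$ for each fixed $Z_0$ with $x\notin Z_0$, but this does not let you conclude $\nu_\infty(\{x\notin Z\})=0$ when the set of such $Z$ is uncountable. This is precisely why the paper needs the stronger statement \ref{thB'}, which controls mass near a whole compact $L$-orbit piece $L_0Y$ at once: it shows $\nu_\infty(LY_i)=0$ whenever $x\notin LY_i$, and then countability of the family $\{LY_i\}$ closes the argument. Your support-containment step ($x\in Z$ forces $Z=\overline{\Gamma\cdot x}$) is correct and is a slightly cleaner way to finish than the paper's reduction to minimal dimension, but it does not repair the uncountability issue.

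Your sketch of (iii) also diverges from the intended (and simpler) route. You propose to run Birkhoff on the skew-product $(\bg,y)\mapsto(\sigma\bg,g_1y)$ over $\nu_Y$ and then ``propagate'' from $\nu_Y$-a.e.\ $y$ to every $x$; this propagation is exactly what is hard, and you do not supply it. The paper instead applies Breiman's law of large numbers directly to the empirical measures $\frac{1}{n}\sum_{k<n}\delta_{g_k\cdots g_1x}$ starting at the given $x$: Theorem~\ref{thA}(ii) gives tightness, so Breiman produces an a.s.\ weak limit that is a $\mu$-stationary probability, and then (iii) is concluded by the \emph{same} identification argument as for (ii) (again using \ref{thB'} and Lemma~\ref{countable}). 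So (ii) and (iii) share a single argument, and there is no need for a separate Birkhoff step or a comparison to generic points.

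In short: add Lemma~\ref{countable} and replace Theorem~\ref{thB} by \ref{thB'} to make the identification rigorous, and for (iii) run Breiman on the walk from $x$ itself and reuse the identification; the rest of your structure (tightness, telescoping stationarity, support containment) is sound.
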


Theorem \ref{thC} follows from \cite{bq3} in the case where $\mu$ has a compact support, which implies in particular that $\Gamma$ is compactly generated. It is a well-known conjecture that the Cesàro-averages in the second item should not be necessary. This  will be proven in a follow-up paper \cite{Benard21} in the case where two powers of $\mu$ are not mutually singular.

\subsection{Dynamics of the proofs}

In order to prove Theorem~\ref{thA}, we use a variant of Foster's recurrence criterion for walks with a negative drift, applied to an appropriate proper drift function on the space of lattices.
This strategy is generally credited to Margulis, and goes back to the works~\cite{EskMarMoz98} and \cite{EskMar04}.
It was further developed by Benoist and Quint~\cite{bq_fv}.
In those papers, the authors make an exponential moment assumption, and use the elementary Proposition~\ref{rec-EM} below\footnote{To be more accurate, those papers rather use a contraction property of the form $\E_x[F(X_1)]\leq aF(x) + b$, for some $a\in(0,1)$ and $b\in\R^{+}$.
It is not hard to see that under the assumptions of the above proposition, the function $F(x)=e^{\delta f(x)}$ where $\delta>0$ is very small, has the desired contraction property.
The recurrence criterion under the contraction property is sometimes called \enquote{geometric recurrence criterion}, or \enquote{exponential recurrence criterion}.
}, whose first item is due to Eskin-Margulis  \cite[Lemma 3.1]{EskMar04} and the second to Benoist-Quint \cite[Proposition 3.9]{bq3}.

\begin{proposition}[Eskin-Margulis, Benoist-Quint]
\label{rec-EM}
Let $\Omega$ be a measurable space, $f: \Omega\rightarrow \R^+$ a measurable function and $(X_{n})_{n\geq 0}$ a measurable Markov chain on $\Omega$. Assume 
\begin{itemize}
\item[$(\mathbf{EM})$] $\exists \alpha_{0}>0$, 
$\,\sup_{x\in \Omega}\mathbb{E}_{x}\left(e^{\alpha_{0}|f(X_{1})-f(x)|}\right)<\infty$
\item[$(\mathbf{D})$]  $\exists R_{0}, \lambda_{1}>0,$ $\forall x\in f^{-1}(R_{0}, +\infty)$,
$ \,\mathbb{E}_{x}(f(X_{1}))\leq f(x)-\lambda_{1}  $
\end{itemize}
Then for all $\eps>0$, there exists $R>0$ such that for  $x \in \Omega$, 
\begin{enumerate}[label=(\roman*)]
\item\label{pro}
\(
 \forall n\geq n_{x},\quad   \PP_x[f(X_n)>R] \,<\, \eps 
\)
\item\label{tra}
\(
 \PP_x-\mbox{almost surely},
\)
\[
\limsup_{n\to+\infty}\frac{1}{n}\card\{k\in\{1,\dots,n\}\ |\ f(X_k)>R\} < \eps.
\]
\end{enumerate}
\end{proposition}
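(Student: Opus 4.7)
The strategy is to introduce the Lyapunov function $\phi(x)=e^{\alpha f(x)}$ with $\alpha>0$ chosen small compared to $\alpha_{0}$, and to derive a Foster-type contraction
\[
\mathbb{E}_x[\phi(X_1)] \,\leq\, (1-c)\,\phi(x) + C\,\mathbbm{1}_{\{f(x)\leq R_0\}}
\]
for some constants $c,C>0$. To establish it I would factor $\mathbb{E}_x[\phi(X_1)] = \phi(x)\,\mathbb{E}_x\bigl[e^{\alpha(f(X_1)-f(x))}\bigr]$ and apply the pointwise estimate $e^{\alpha u} \leq 1 + \alpha u + \tfrac{\alpha^2}{2} u^2 e^{\alpha_0|u|}$, valid for $0<\alpha\leq\alpha_0$. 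On $\{f>R_0\}$ the linear term is controlled by the drift condition (D), while the quadratic remainder is uniformly bounded by (EM); choosing $\alpha$ small enough ensures $c\simeq \alpha\lambda_1/2>0$. On $\{f\leq R_0\}$ the crude bound $\phi\leq e^{\alpha R_0}$ combined with (EM) furnishes a uniform constant $C$.

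For item \ref{pro}, iterating the Foster inequality gives by induction
\[
\mathbb{E}_x[\phi(X_n)] \,\leq\, (1-c)^n\phi(x) + C/c,
\]
and Markov's inequality yields $\PP_x[f(X_n)>R]\leq e^{-\alpha R}\bigl((1-c)^n e^{\alpha f(x)}+C/c\bigr)$. It then suffices to pick $R$ large enough so that $(C/c)\,e^{-\alpha R}<\eps/2$, and then $n_x$ large enough (of order $\log\phi(x)$) so that the remaining term falls below $\eps/2$.

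For item \ref{tra}, I would introduce the Doob-type martingale
\[
M_n := \phi(X_n) - \phi(X_0) - \sum_{k=0}^{n-1}\bigl(\mathbb{E}[\phi(X_{k+1})\mid \mathcal{F}_k] - \phi(X_k)\bigr),
\]
so that the Foster inequality rearranges as
\[
c \sum_{k=0}^{n-1}\phi(X_k) + \phi(X_n) \,\leq\, \phi(X_0) - M_n + C\sum_{k=0}^{n-1}\mathbbm{1}_{\{f(X_k)\leq R_0\}}.
\]
Dividing by $n$ and using the pointwise bound $\mathbbm{1}_{\{f(X_k)>R\}}\leq e^{-\alpha R}\phi(X_k)$, the statement reduces to proving $M_n/n\to 0$ almost surely. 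For this I would repeat the Foster argument for $\phi^2=e^{2\alpha f}$ (shrinking $\alpha$ so that $2\alpha\leq\alpha_0$) to bound $\mathbb{E}_x[\phi(X_k)^2]$ uniformly in $k$, which controls the conditional variances of the martingale increments, and then invoke Kolmogorov's strong law for $L^2$-martingales (via summability of $\sum n^{-2}\mathbb{E}[(M_n-M_{n-1})^2]$).

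The main obstacle is the calibration of $\alpha$ in the Foster inequality: one must simultaneously extract a linear-in-$\alpha$ contraction from the first-moment drift (D) and absorb a quadratic-in-$\alpha$ remainder using the exponential moment (EM), all with uniform constants in $x$. Once this single inequality is secured, items \ref{pro} and \ref{tra} follow from now-classical Lyapunov and square-integrable martingale techniques.
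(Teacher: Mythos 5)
Your proposal is correct in substance: it reproduces the classical exponential-Lyapunov strategy of Eskin--Margulis for item \ref{pro} and the martingale law-of-large-numbers argument of Benoist--Quint for item \ref{tra}, which is precisely why the paper attributes this proposition to \cite{EskMar04} and \cite{bq3} and cites it without proof. The paper itself does not re-derive Proposition~\ref{rec-EM} this way. Instead, its Section~\ref{Sec2} develops a genuinely different toolbox — return-time tail estimates (Lemma~\ref{return-times}), a renewal estimate (Lemma~\ref{renewal}), and an excursion decomposition (Proposition~\ref{rec-traj}) — that establishes the same conclusions under the strictly weaker hypotheses $(\mathbf{SD})$ or $(\mathbf{D})+(\mathbf{M}_\eta)$, $\eta>0$. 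Your argument would not survive this weakening: the Lyapunov function $\phi=e^{\alpha f}$ only makes sense if $f(X_1)-f(X_0)$ has uniform exponential moments, so $(\mathbf{EM})$ is used in an essential structural way, not merely as a technical convenience. That is exactly the obstruction the paper's machinery is designed to bypass. In short, your route is the standard one and proves exactly the stated proposition; the paper's route is heavier but buys the generalization that is the point of the whole section.

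One small repair to make your Taylor bound usable: with $e^{\alpha u}\le 1+\alpha u+\tfrac{\alpha^2}{2}u^2e^{\alpha_0\abs{u}}$, the remainder $\E_x[u^2e^{\alpha_0\abs{u}}]$ need not be finite under $(\mathbf{EM})$. Take instead $\alpha\le\alpha_1<\alpha_0$ so that the mean-value form of Taylor's theorem gives $e^{\alpha u}\le 1+\alpha u+\tfrac{\alpha^2}{2}u^2e^{\alpha_1\abs{u}}$, and then absorb $u^2e^{\alpha_1\abs{u}}\le C_{\alpha_1,\alpha_0}e^{\alpha_0\abs{u}}$; this yields a uniform bound from $(\mathbf{EM})$, and $\alpha$ can then be shrunk so that the linear drift dominates. (The sign in front of $M_n$ after rearranging the Foster inequality should be $+M_n$, but since $M_n$ is a centered martingale this is immaterial for the conclusion $M_n/n\to 0$.) With these adjustments, both the iteration $\E_x[\phi(X_n)]\le(1-c)^n\phi(x)+C/c$ and the Kolmogorov criterion $\sum_n n^{-2}\E_x[(M_n-M_{n-1})^2]<\infty$ (using the same Foster argument for $e^{2\alpha f}$) go through as you sketched.
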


In order to remove the exponential moment condition in the Benoist-Quint theory, our most important task will be to obtain an analogous recurrence criterion that applies to homogeneous random walks with only a finite first moment assumption.

A first approach would be to simply try weakening the requirement ($\mathbf{EM}$) above.
And indeed we shall see that Proposition \ref{rec-EM} is still true if we replace  ($\mathbf{EM}$) by the condition that the increments of the walk are uniformly bounded in $L^{1+\eta}$ for some $\eta>0$:
\begin{itemize}
\item [$(\mathbf{M}_{\eta})$]  $\exists M>0$, $ \forall x\in \Omega$, 
$\,\mathbb{E}_{x}\left(|f(X_{1})-f(x)|^{1+\eta}\right)<M.$
\end{itemize}

However, carefully constructed (continuous) Markov chains on $\R^+$ show that \emph{conclusions (i) and (ii) may not hold for walks with negative drift and increments uniformly bounded in $L^1$}, in other words if ($\mathbf{EM}$) above is replaced by ($\mathbf{M}_{0}$) (see Section \ref{Sec2}, Examples \ref{counterex-L1} and  \ref{counterex2-L1}). We need a new point of view, which  reflects  more refined properties of homogeneous chains than the drift in expectation expressed by $(\mathbf{D})$. Our crucial claim is that the height of a homogeneous walk evolving  in the cusps of a finite-volume space $\Omega=G/\Lambda$ must decrease faster than a chain on $\R$ with i.i.d. increments of negative mean. We formalize this concept by the notion of stochastic dominance. 

\begin{definition}
Given real random variables $Z$ and $(Z_{i})_{i\in I}$, we say that \emph{$(Z_{i})_{i\in I}$ is stochastically dominated by $Z$} if  for every $i\in I$, $t\in \R$\[
 \PP(Z_i >  t) \,\leq \,\PP(Z>  t).
\]
\end{definition}
The abstract version  of Proposition \ref{rec-EM} we shall need is the following. 
\begin{thmx}[Recurrence of Markov chains with stochastic dominance]
\label{thD} 
Let $\Omega$ be a measurable space, $f: \Omega\rightarrow \R^+$ a measurable function, and $(X_{n})_{n\geq 0}$ a measurable Markov chain on $\Omega$. 

Assume there exist a sublevel set $K:=\{f\leq R_{0}\}$ and integrable real random variables $Z_0$, $Z_1$ with $\E[Z_1]:=-\lambda_1<0$   such that 
\begin{enumerate}
\item $(f(X_1)-f(X_0)|X_0=x)_{x\in K}$ is stochastically dominated by $Z_0$;
\item $(f(X_1)-f(X_0)|X_0=x)_{x\in \Omega \smallsetminus K}$ is stochastically dominated by $Z_1$.
\end{enumerate}
Then for all $\eps>0$, there exists $R>0$ such that for all $x \in \Omega$, 
\begin{enumerate}[label=(\roman*)]
\item\label{pro}
\(
 \forall n\geq 1,\quad   \PP_x[f(X_n)>R] \,<\, \eps + \frac{f(x)}{n\lambda_{1}}.
\)
\item\label{tra}
\(
 \PP_x-\mbox{almost surely},
\)
\[
\limsup_{n\to+\infty}\frac{1}{n}\card\{k\in\{1,\dots,n\}\ |\ f(X_k)>R\} < \eps.
\]
\end{enumerate}
\end{thmx}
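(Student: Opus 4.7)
The strategy is to combine (a) a drift estimate controlling the first hitting time of $K$, (b) a uniform-in-$n$ tightness bound for the walk starting in $K$, and (c) an almost-sure upgrade for~(ii). \emph{Coupling.} First, by a conditional Strassen theorem combined with measurable selection, I would enlarge the probability space so as to carry two independent iid families $(\zeta^0_k)_{k\ge 1}$, $(\zeta^1_k)_{k\ge 1}$ of copies of $Z_0$ and $Z_1$, and an adapted sequence $(W_k)_{k\ge 1}$ with $W_k=\zeta^0_k$ on $\{X_{k-1}\in K\}$, $W_k=\zeta^1_k$ on $\{X_{k-1}\notin K\}$, and $f(X_k)-f(X_{k-1})\le W_k$ almost surely. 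Telescoping gives $f(X_n)\le f(X_0)+\sum_{k=1}^nW_k$. \emph{Drift to $K$.} Setting $\tau=\inf\{n\ge 0:X_n\in K\}$ and using $\E[Z_1]=-\lambda_1$, the process $n\mapsto f(X_{n\wedge\tau})+\lambda_1(n\wedge\tau)$ is a non-negative supermartingale when $x\notin K$, so optional stopping yields $\E_x[\tau]\le f(x)/\lambda_1$ and $\PP_x[\tau>n]\le f(x)/(n\lambda_1)$; this already produces the $f(x)/(n\lambda_1)$ term in~(i).

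The crux is to prove the uniform tightness estimate
\[
Q(R):=\sup_{y\in K,\,n\ge 0}\PP_y[f(X_n)>R]\xrightarrow[R\to\infty]{}0.
\]
I would decompose according to the last visit of the walk to $K$ before time $n$: the index $\kappa=\max\{k\le n:X_k\in K\}$ is well defined because $X_0=y\in K$. On $\{\kappa=n\}$ we have $f(X_n)\le R_0<R$. On $\{\kappa=k\}$ with $k<n$, the walk performs a single excursion outside $K$ on $(k,n]$, and the coupling gives
\[
f(X_n)\le R_0+\zeta^0_{k+1}+\sum_{i=k+2}^n\zeta^1_i.
\]
Since $\E[Z_1]<0$, the strong law implies the iid walk $m\mapsto \sum_{i=1}^m\zeta^1_i$ drifts to $-\infty$, so $M:=\sup_{m\ge 0}\sum_{i=1}^m\zeta^1_i$ is almost surely finite. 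The key point is that at each time $n$ the walk sits in at most one excursion, so the disjoint contributions of the events $\{\kappa=k\}$ combine cleanly to a single term controlled by $\PP[R_0+Z_0+M>R]$, which tends to $0$ as $R\to\infty$. With $Q$ in hand, (i) follows from the strong Markov property at $\tau$:
\[
\PP_x[f(X_n)>R]\le\PP_x[\tau>n]+\E_x\bigl[\mathbf{1}_{\tau\le n}\,\PP_{X_\tau}[f(X_{n-\tau})>R]\bigr]\le\frac{f(x)}{n\lambda_1}+Q(R),
\]
by choosing $R$ so that $Q(R)<\eps$. Part~(ii) is then deduced from~(i): the expected Cesàro count $\E_x\!\left[\tfrac1n\#\{k\le n:f(X_k)>R\}\right]$ is bounded by $\eps+O(\log n/n)$, and a standard martingale decomposition or Birkhoff-type argument on sums of bounded indicators along the Markov chain upgrades the convergence to an almost-sure bound.

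The main obstacle is the tightness estimate for the walk started in $K$. With only $L^1$ stochastic dominance, classical random-walk tools such as finiteness of $\E[\sup_m\sum_{i=1}^m\zeta^1_i]$ are unavailable, so a naive union bound over excursions would introduce a factor growing linearly in $n$ and destroy the uniformity. What saves the argument is the exact independence structure of the Strassen coupling: the iid copies $\zeta^j_k$ are independent of the chain's filtration at the relevant stopping times, and the entry value of any excursion is stochastically dominated by $R_0+Z_0$ with $Z_0$ independent of $M$. Converting this heuristic into a rigorous pointwise-in-$n$ inequality---while accounting for the conditioning that ``the walk stays outside $K$ throughout the excursion''---is the delicate technical point that replaces the exponential-moment computations of Eskin--Margulis and Benoist--Quint.
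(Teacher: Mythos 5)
Your approach to part~(i) --- a pathwise coupling with iid families $(\zeta^0_k)$, $(\zeta^1_k)$ and a decomposition by the last visit to $K$ --- is a genuinely different route from the paper's (which proves a renewal estimate from the summability of return-time tails), and the idea can be made rigorous, but your stated bound $\PP[R_0+Z_0+M>R]$ is not correct. The clean way to sidestep the conditioning issue you acknowledge is to record the pathwise inequality $f(X_n)\leq R_0+\Xi_n$ on $\{\kappa<n\}$, where $\Xi_n:=\max_{0\le k<n}\bigl(\zeta^0_{k+1}+\sum_{i=k+2}^n\zeta^1_i\bigr)$ is a function of the auxiliary variables alone, so its law does not depend on the chain or the starting point; since $\Xi_n\stackrel{d}{=}\max_{0\le m<n}(\zeta^0_{m+1}+S_m)$ with $S_m=\sum_{i\le m}\zeta^1_i$, one gets $\sup_{n\geq 0,\,y\in K}\PP_y[f(X_n)>R]\le\PP[\Xi_\infty>R-R_0]$ with $\Xi_\infty=\sup_{m\geq 0}(\zeta^0_{m+1}+S_m)$. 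This last quantity is a.s.\ finite because $S_m/m\to-\lambda_1$ a.s.\ and $\zeta^0_m/m\to 0$ a.s.\ (Borel--Cantelli, using $\E|Z_0|<\infty$); but $\Xi_\infty$ is \emph{not} dominated by $Z_0+M$, since a fresh $\zeta^0_{m+1}$ is attached to each $m$, so your claimed bound needs to be replaced by $\PP[\Xi_\infty>R-R_0]$.

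The genuine gap is in part~(ii). You claim it is ``deduced from~(i)'' via ``a standard martingale decomposition or Birkhoff-type argument'', but neither applies: the martingale decomposition of $A_n=\sum_{k\le n}\1_{f(X_k)>R}$ leaves the remainder $\frac{1}{n}\sum_{k\leq n}\PP_{X_{k-1}}[f(X_1)>R]$, which is not uniformly small (it is close to $1$ whenever $X_{k-1}$ is far out), and Birkhoff presupposes a stationary measure that is not given. The expectation bound in~(i) does not on its own control the almost-sure behaviour of the Cesàro averages. The paper's proof of~(ii) is a separate argument: it decomposes the trajectory into excursions away from $K$, shows that the expected time spent in $\{f>R\}$ per excursion is uniformly small once $R$ is large, and invokes Kolmogorov's law of large numbers for dependent variables. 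The key ingredient making this work is the summability $\sum_{n\geq 1}\sup_{x\in K}\PP_x(\tau\geq n)<\infty$ of the return-time tails, which your proposal never establishes --- your optional-stopping step yields only $\E_x[\tau]\leq f(x)/\lambda_1$, a strictly weaker bound. To complete~(ii) you would need to prove this summability (the paper does so by comparison with the $Z_1$-random walk together with Foster's criterion) and then run an excursion argument of the kind above.
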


It is pleasant to note that conclusion (i) gives an \emph{optimal bound} on the time $n$ at which the distribution $X_{n}$ starts to accumulate on  the sublevel set $\{f\leq R\}$ (see discussion below Proposition \ref{rec-law}).

\bigskip

Once Theorem \ref{thD} is established, we use it to show that homogeneous random walks do not escape at infinity, namely Theorem \ref{thA}.
Theorem \ref{thD} will be applied to a proper drift function $f:\Omega\to\R^+$ extracted from Benoist-Quint's paper \cite{bq_fv}. Benoist-Quint's function is itself inspired by a former construction due to Eskin, Margulis, and Mozes \cite{EskMar04, EskMarMoz98}.  Checking the stochastic condition of Theorem \ref{thD}  will require some work, mostly done in Section \ref{Sec3}. 

\bigskip

The proof of Theorem \ref{thB} is inspired by \cite[Section~6]{bq2} which obtains  item (ii) when $\mu$ has a finite exponential moment.   However, several important changes are needed. The approach of \cite{bq2} consists in showing that the first return random walk  induced on a large compact subset $Q$ drifts away from $Y\cap Q$  in expectation. It is not possible to extract information on the original $n$-th step distribution $\mu^n \ast \delta_{x}$ from this induced random walk, so conclusion (i) cannot be obtained with such a strategy. Moreover, checking that the induced walk does satisfy the new conditions of stochastic dominance formulated in Theorem \ref{thD} would raise other significant difficulties.

Our solution instead is to construct a global drift function for every  closed invariant finite-volume homogeneous subset $Y$; this is done by gluing together the Benoist-Quint drift function with a function that drifts away from $Y$ on a compact subset.
We believe this new technique could be useful to construct drift functions in other contexts.

\bigskip
Finally, the equidistribution of each $\mu$-trajectory in its closure stated in Theorem \ref{thC} results from a combination of  Theorems \ref{thA}, \ref{thB},   the Eskin-Lindenstrauss classification of stationary probability measures  \cite[Theorem~1.3]{el_rw},  and the general strategy of \cite{bq3}. The proof roughly goes as follows. Consider a starting point $x\in \Omega$, a typical sequence of instructions $(g_{i})_{i\geq 1}$ and write for $n\geq 1$,
\[
\nu_{n}=\frac{1}{n}\sum_{k=0}^{n-1}\delta_{g_k\dots g_1 x}.
\]
 Breiman's law of large numbers and  Theorem \ref{thA} imply that any weak-$\ast$ limit $\nu$ of  $(\nu_{n})_{n\geq 1}$ is a $\mu$-stationary probability measure. By Theorem \ref{thB}, $\nu$ does not give mass to the $\Gamma$-invariant homogeneous subspaces which do not contain $x$.  
The Eskin-Lindenstrauss classification  of stationary measures implies that $\nu=\nu_{Y}$ for some $Y\in S_{\Omega}(\Gamma)$ and necessarily  $Y:= \overline{\Gamma.x}$. This yields  the equidistribution statement. 

\bigskip

\noindent\textbf{Structure of the paper}

Section \ref{Sec2} is dedicated to Theorem \ref{thD}.
We first explain in greater detail the role of the dominance hypothesis and then prove the result, subdivided into two propositions (Propositions~\ref{rec-law} and \ref{rec-traj}).
We also check that the conclusions still hold for walks with a negative drift and increments bounded in $L^{1+\eta}$.

Section \ref{Sec3} converts Theorem  \ref{thD} into a more handleable recurrence criterion for homogeneous random walks.
It will be used in Sections \ref{Sec3} and \ref{Sec4} to show that homogeneous walks accumulate neither at infinity, nor around any proper invariant homogeneous subspace.  

Section \ref{Sec4} deals with Theorem \ref{thA}.
We check that the Benoist-Quint drift function satisfies the controlled drift assumption of Section~\ref{Sec3}.

Section \ref{Sec5} deals with Theorem \ref{thB}. The main part of the argument is the construction of a new drift function for invariant homogeneous closed subsets. It is obtained by gluing the Benoist-Quint function with some function which drifts away from the homogeneous subspace on a compact set. 

Section \ref{Sec6} contains the proof of Theorem \ref{thC}.
It combines Theorems~\ref{thA}, \ref{thB} and the classification  of  stationary measures of Eskin-Lindenstrauss.

Section \ref{Sec7}  concludes the paper with some possible further directions of research.

\bigskip

\noindent\textbf{Acknowledgements.}
We are indebted to Yves Benoist and Jean-François Quint for several helpful discussions on the subject.
It is a pleasure to thank them here for sharing their knowledge, and for organizing the \emph{Arbeitsgemeinschaft} on rigidity of stationary measures at the Mathematisches Forschungsinstitut Oberwolfach, where we learned about these problems.
We also thank Jean-Philippe Dru for his help in finding reference~\cite{Tweed83}.

\section{Markov chains with a negative drift}
\label{Sec2}

The goal of this section is to prove Theorem \ref{thD} stating that Markov chains on a stratified state space and satisfying some stochastic dominance condition have \emph{neither  escape of mass  nor escape  of empirical measures}.
We also note that these  conclusions are still valid if the chain is assumed to have a negative drift and increments uniformly bounded in $L^{1+\eta}$ for some $\eta>0$.
Counterexamples are given if $\eta=0$.

Recurrent aspects of general Markov chains is a classical subject in probability theory, and the books \cite{Rev84} and \cite{MeyTweed12} will provide a thorough introduction to the subject to the interested reader.
We note that many recurrence results for Markov chains on continuous state spaces make some irreducibility assumption such as $\psi$-irreducibility, or Harris-recurrence.
Those are natural assumptions that remove the dependency of the Markov chain on the starting point, and they are certainly necessary to obtain some convergence statement such as \cite[Theorem~13.0.1]{MeyTweed12}, but they are not satisfied by random walks on homogeneous spaces.

However, in this section, we shall only be interested in weaker statements such as non-escape of mass, and for that, one can compensate the lack of irreducibility by making a stronger moment assumption, or a stochastic dominance assumption. 
It is only through the  classification of stationary measures \cite{bq2, el_rw}, a result that relies heavily on the specific structure of homogeneous spaces, that we shall be able to derive an equidistribution theorem from the result on non-escape of mass.

\smallskip
For the whole section, we  fix a measurable space $\Omega$,  a non-negative  measurable  function $f:\Omega \rightarrow \R^{+}$, and a Markov chain  $(X_{n})_{n\geq 0}$   on $\Omega$. Given a point $x\in \Omega$, the notations $\mathbb{E}_{x}$ and $\mathbb{P}_{x}$  will refer to the expectation and probability conditional to $X_{0}=x$.

\subsection{Dominance condition and return times} \label{Sec2.1}

We first give some perspective to the stochastic dominance assumptions in Theorem \ref{thD}. For future reference, these assumptions will be denoted by $(\mathbf{SD})$. Recall for clarity:

\begin{itemize}
\item [$(\mathbf{SD})$]
There exist a sublevel set $K:=\{f\leq R_{0}\}$ and integrable real random variables $Z_0$, $Z_1$ with $\E[Z_1]:=-\lambda_1<0$   such that  every $x\in \Omega$, $t\in \R$,
\[
\mathbb{P}_{x}(f(X_{1})-f(x) > t )\,\,\leq\,\, \mathbb{P}(Z_{0}> t)\1_{K}(x) \,+\,\mathbb{P}(Z_{1}> t)\1_{\Omega\smallsetminus K}(x).
\]
\end{itemize}

We begin by observing that  $(\mathbf{SD})$ is a very natural condition as it is satisfied for Markov chains with a well-behaved negative drift. This criterion  is expressed in Lemma  \ref{crit-SD} below.  It will  further be adapted to homogeneous chains in Section \ref{Sec3}. The statement uses the notion of standard realisation of a random variable which we first recall. 

\begin{definition}[Standard realisation]
Let $Z$ be a real random variable.
The random variable on the probability space $([0,1], \mathscr{B}([0,1]), \leb_{|[0,1]})$ defined by 
\[
\begin{array}{lccl}
Z': & [0,1] & \rightarrow & \R\\
& s & \mapsto & \max\{t\in \R, \,\, \mathbb{P}(Z\geq t)\geq s\}
\end{array}
\]
has the same law as $Z$. We call it the \emph{standard realisation} of $Z$.
\end{definition}

\begin{remark}
This notion allows a second formulation of stochastic dominance:  $Z_1$ is dominated by $Z_2$ if and only if their standard realisations $Z_1', Z_2'$ satisfy $Z_1'\leq Z_2'$ everywhere on $[0,1]$.
\end{remark}

\begin{lemma}[Criterion for stochastic dominance]
\label{crit-SD}
Let $(\Omega, f, (X_{n}))$ as above. Assume there exist a probability space $(E, \mathbb{P})$, an integrable random variable  $Z : E \rightarrow \R_{+}$ and constants $R_{0}, \lambda, \alpha>0$
such that 
\begin{enumerate}
\item $\forall x \in \Omega$, 
$$f(x)> R_{0} \implies \mathbb{P}_{x}(f(X_{1}) \leq f(x) -\lambda )\geq 1-\alpha $$
\item $\forall x \in \Omega$, $t\in \R_{+}$, 
$$\mathbb{P}_{x}(f(X_{1}) - f(x) > t )\leq  \mathbb{P}(Z > t ) $$
\item Denoting by $Z'$ the standard realisation of $Z$ we have 
$$\mathbb{E}(Z'\1_{[0,\alpha]}) <\lambda(1-\alpha) $$
\end{enumerate}
Then $(\Omega, f, (X_{n}))$ satisfies the  dominance condition $(\mathbf{SD})$ with constants $(R_{0}, \lambda_{1})$ where $\lambda_{1}:=\lambda(1-\alpha) -\mathbb{E}(Z'\1_{[0,\alpha]})$.
\end{lemma}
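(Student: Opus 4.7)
The plan is to produce the two dominating variables $Z_{0}$ and $Z_{1}$ of $(\mathbf{SD})$ separately on $K$ and on its complement, then verify the tail inequality by splitting on the value of $t$.

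For $x \in K$ the work is essentially free. Hypothesis 2 already says $\mathbb{P}_{x}(f(X_{1}) - f(x) > t) \leq \mathbb{P}(Z > t)$ for every $t \geq 0$, and because $Z \geq 0$ the right-hand side equals $1$ when $t < 0$, so the inequality holds there too. I would therefore set $Z_{0} := Z$, which is integrable by assumption; note that $(\mathbf{SD})$ places no constraint on the mean of $Z_{0}$.

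The construction of $Z_{1}$ for $x \notin K$ is the substantive step. I propose to build $Z_{1}$ directly in ``quantile form'' on $([0,1], \leb)$ by setting
\[
Z_{1}(s) \;=\; \begin{cases} Z'(s) & \text{if } s\in [0,\alpha], \\ -\lambda & \text{if } s\in (\alpha, 1],\end{cases}
\]
where $Z'$ is the standard realisation of $Z$. This random variable is integrable, and a direct computation gives
\[
\E[Z_{1}] \;=\; \int_{0}^{\alpha} Z'(s)\,\dd s - \lambda(1-\alpha) \;=\; \E(Z'\1_{[0,\alpha]}) - \lambda(1-\alpha) \;=\; -\lambda_{1},
\]
which is strictly negative by Hypothesis 3.

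It then remains to verify, for $x$ with $f(x) > R_{0}$, the tail bound $\mathbb{P}_{x}(f(X_{1}) - f(x) > t) \leq \mathbb{P}(Z_{1} > t)$ for every $t \in \R$. Because $Z'$ is non-increasing and bounded below by $0$, $Z_{1}$ is non-increasing on $[0,1]$, and reading off its level sets yields $\mathbb{P}(Z_{1} > t) = 1$ for $t < -\lambda$, $\mathbb{P}(Z_{1} > t) = \alpha$ for $-\lambda \leq t < 0$, and $\mathbb{P}(Z_{1} > t) = \min\bigl(\mathbb{P}(Z > t),\,\alpha\bigr)$ for $t \geq 0$. The verification then distributes over three cases. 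The case $t < -\lambda$ is trivial. For $-\lambda \leq t < 0$, Hypothesis 1 gives $\mathbb{P}_{x}(f(X_{1}) - f(x) > t) \leq \mathbb{P}_{x}(f(X_{1}) - f(x) > -\lambda) \leq \alpha$. For $t \geq 0$, Hypothesis 2 supplies the bound by $\mathbb{P}(Z > t)$ and Hypothesis 1 supplies the bound by $\alpha$, so the minimum of the two is still an upper bound. No genuine obstacle is present; the only mildly delicate point is the bookkeeping for the piecewise formula describing the law of $Z_{1}$ and the matching of regimes in $t$ to the correct hypothesis.
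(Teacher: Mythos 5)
Your proof is correct and constructs the same dominating variables as the paper: $Z_0 = Z$ and $Z_1 = Z'\mathbbm{1}_{[0,\alpha]} - \lambda\mathbbm{1}_{(\alpha,1]}$ on $([0,1],\mathrm{leb})$. The only cosmetic difference is that you verify the dominance for $x\notin K$ by a direct case split on $t$ in the tail bound, whereas the paper shows the pointwise inequality $Y'_x \leq Z_1$ between quantile functions and invokes the quantile characterisation of stochastic dominance; the two verifications are equivalent.
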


\begin{proof}
Given $x\in \Omega$, we denote by $Y_{x}$ the  variable $f(X_{1})-f(x)$ varying under $\mathbb{P}_{x}$.
Assumption \emph{2.} of the lemma yields that for any $x\in \Omega$, $Y_{x}$ is stochastically dominated by $Z$.  In particular, we may choose $Z_{0}=Z$.  

We now define $Z_{1}$.  Let $x\in \Omega\smallsetminus K$ and $Y'_{x},Z': [0,1]\rightarrow \R$ be the standard realisations of $Y_{x},Z$ defined above. By assumption 1), we have  $Y'_{x}\leq -\lambda$ on $(\alpha, 1]$, hence we can write everywhere on $[0,1]$ 
$$Y'_{x}\leq Z'\1_{[0,\alpha]}  -\lambda \1_{(\alpha, 1]}$$
The right-hand side defines a random variable $Z_{1}$ independently of the point $x\in \Omega\smallsetminus K$ chosen earlier. Moreover,  assumption \emph{3.} yields $$\mathbb{E}(Z_{1}) \leq \E(Z'\1_{[0,\alpha]}) -\lambda(1-\alpha) <0 $$ which concludes the proof. 
\end{proof}

\begin{remark}
The idea behind Lemma \ref{crit-SD} can  easily be understood through the graphs of the repartition functions, as in Figure~\ref{fx} below. Recall that the repartition function of a real random variable $X$ is defined by $F_X(t)=\PP(X\leq t)$ for $t\in \R$.
\begin{figure}[H]
\label{fx}
\begin{center}
\begin{tikzpicture}[domain=.65:6]
%\draw[very thin,color=gray] (-0.1,-1.1) grid (3.9,1.9);
\draw[->] (-5,0) -- (5.5,0) node[below] {$t$};
\draw[-] (-5,3) node[left] {$1$} -- (5.5,3);
\draw[-, color=gray] (-5,2.5) node[left] {$1-\alpha$} -- (5.4,2.5);
\draw[->] (1,-.1) node[below] {$0$}-- (1,4.2);
\draw[-,color=gray] (-1,-.1) node[below]{$-\lambda$} -- (-1,4);
%\draw[color=red] plot (\x,3/\x);
%\draw[color=red] (6,.7) node[right] {$y =(\log x)^{-c}$};
%\draw[color=blue] plot (\x,1/\x);
%\draw[color=blue] (6,-.1) node[right] {$y = (\log x)^{-2c}$};
\draw[color=green] (5,3.3) node[right] {$F_{f(X_1)-f(x)}(t)$}; 
\draw[color=green, thick] (-5, .2) .. controls (-3,.1) and (-2,2.1) .. (-1,2.6) .. controls (-.5,2.85) .. (5.5,2.95); % node[right] {$F_{f(X_1)-f(x)}(t)$}; 
%\draw[color=blue, thick] (-5, .08) .. controls (-2,.11) and (-2,2.01) .. (0,2.51) .. controls (1,2.76) .. (5.5,2.86) node[right] {$F_{Z}(t)$}; 
\draw[color=blue, thick] (-5, .01) -- (1, .01) .. controls (1.2,1) and (2,2.2) .. (3,2.5) .. controls (4,2.8) .. (5.5,2.86);
\draw[color=red,thick] (-5, .05) -- (-1,.05); % .. controls (0,2.75) .. (5.5,2.85) node[right] {$F_{Z}(t)$}; 
\draw[color=red, thick] (-1,2.52) -- (3,2.52);%node[left] {$1-\alpha$} -- (5.5,2.5);
\draw[color=red, thick] (3,2.54) .. controls (4,2.84).. (5.5,2.9) node[right] {$F_{Z_1}(t)$};%node[left] {$1-\alpha$} -- (5.5,2.5);
\draw[color=blue] (5.4,2.5) node[right] {$F_{Z}(t)$};
% .. controls (0,2.8) .. (5.5,2.8)
%\draw[color=green] (6,.3) node[right] {$y=\psi(x)$};
%\draw[color=gray] (5,.6) -- (5,0) node[below] {$T$};
%\draw[color=gray] (5,.6) -- (1.67,.6) -- (1.67,0) node[below] {$e^{\sqrt{\log T}}$};
%\draw[color=gray] (1.67,.6) -- (0,.6);
%\draw[color=gray] (.75,1.33) -- (.75,0) node[below] {$T_1$};
%\draw[color=orange] plot (\x,{0.05*exp(\x)}) node[right] {$f(x) = \frac{1}{20} \mathrm e^x$};
\end{tikzpicture}
\end{center}
\caption{Stochastic domination by $Z_1$ with $\E[Z_1]<0$.}
\end{figure}
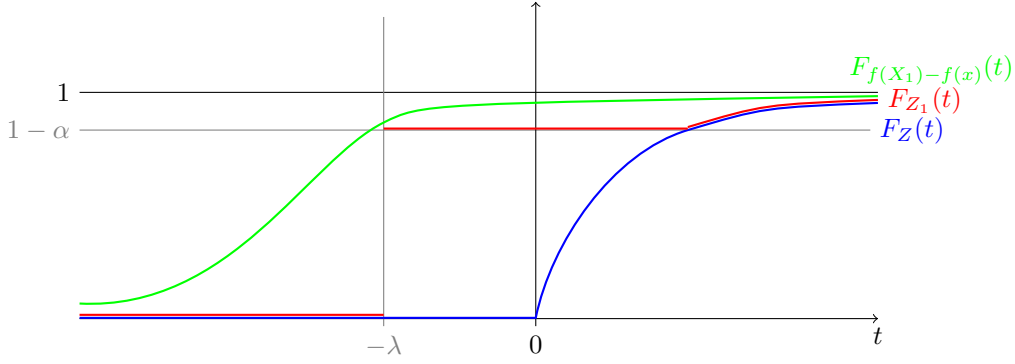
One can readily compute the expectation of $Z_1$ from the above picture:
\begin{align*}
\E[Z_1] & =\int_{\R}t\,\dd F_{Z_1}(t)\\
&  = - (1-\alpha)\lambda + \int \1_{\{F_Z(t)\geq 1-\alpha\}}\dd F_Z(t)\\
& = - (1-\alpha)\lambda + \E[\1_{[0,\alpha]}Z'] = -\lambda_1 <0.
\end{align*}
\end{remark}

%\pagebreak
 
We now turn to the first implications of condition $(\mathbf{SD})$. Its crucial input is that it allows to \emph{bound the tail probabilities of the return time to the sublevel set $K$} independently of the starting point $x\in K$, and so that the resulting sequence of bounds is summable. This will be the key ingredient to show that the mass of the $n$-th step distribution and the empirical measures do not escape at infinity.
Such bounds are also available if the walk has a negative drift and increments uniformly bounded in $L^{1+\eta}$ for some $\eta>0$, so the non-escape estimates are equally valid in this case.
This latter observation will not be used in the other sections, but seems general enough to be of independent interest.
It also extends the results based on exponential moment conditions used in \cite{EskMar04, bq3}.

\begin{lemma}[Return times]\label{return-times}
Suppose $(\Omega, f, (X_{n}))$  satisfy condition $(\mathbf{SD})$, or conditions $(\mathbf{D})$ and $(\mathbf{M}_{\eta})$ for some $\eta>0$. Denote by
\[
\tau = \inf \{ n\geq 1\ |\ X_{n}\in K\}
\]
the first return time to $K$.
Then 
\[
\sum_{n\geq 1}\sup_{x \in K}\mathbb{P}_{x}(\tau\geq n) <\infty.
\]
\end{lemma}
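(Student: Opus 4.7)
The plan is to couple the Markov chain with an auxiliary i.i.d.\ random walk whose law is independent of the starting point $x\in K$, and then to reduce the summability to a classical first-passage-time estimate for a random walk with negative mean.

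\textbf{Coupling step under $(\mathbf{SD})$.} By the quantile transform (Strassen's theorem on stochastic dominance), one constructs on a common probability space a copy $Z_0^{*}\stackrel{d}{=}Z_0$ and an i.i.d.\ sequence $(W_k^{*})_{k\geq 2}\stackrel{d}{=}Z_1$, independent of $x$, jointly coupled to the chain $(X_n)$ started at $x\in K$, in such a way that $f(X_1)-f(x)\leq Z_0^{*}$ and $f(X_k)-f(X_{k-1})\leq W_k^{*}$ on $\{X_{k-1}\notin K\}$ for every $k\geq 2$. Telescoping on the event $\{X_1,\dots,X_n\notin K\}$ and using $f(x)\leq R_0<f(X_k)$, one obtains
\[
\{\tau\geq n+1\}\;\subseteq\;\bigcap_{k=0}^{n-1}\{Z_0^{*}+S_k>0\}\;=:\;\{\eta\geq n\},\qquad S_k=W_2^{*}+\cdots+W_{k+1}^{*},\ S_0=0,
\]
with $\eta=\inf\{k\geq 0:Z_0^{*}+S_k\leq 0\}$. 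As the right-hand side does not depend on $x$, summing gives
\[
\sum_{n\geq 1}\sup_{x\in K}\mathbb{P}_x(\tau\geq n)\;\leq\;1+\mathbb{E}[\eta],
\]
reducing the problem to proving $\mathbb{E}[\eta]<\infty$.

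\textbf{First-passage estimate.} Conditional on $Z_0^{*}=a>0$, the variable $\eta$ equals the first-passage time $T_a=\inf\{k\geq 1:S_k\leq -a\}$ of an i.i.d.\ random walk with integrable increments of negative mean $-\lambda_1$. By classical fluctuation theory (e.g.\ Feller, Vol.~II, Ch.~XII), the strict descending ladder time $T^-=\inf\{k\geq 1:S_k\leq 0\}$ is integrable, and Wald's identity gives $\mathbb{E}|S_{T^-}|=\lambda_1\mathbb{E}[T^-]$. Writing $T_a$ as the $N_a$-th ladder epoch and applying Wald's identity twice---to the i.i.d.\ inter-ladder times and to the nonnegative i.i.d.\ ladder heights, with a standard truncation of the lower tail of $Z_1$ to control the overshoot under mere $L^1$-integrability---yields a linear bound $\mathbb{E}[T_a]\leq C(1+a)$, the constant $C$ depending only on the law of $Z_1$. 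Integrating against the law of $Z_0^{*}$,
\[
\mathbb{E}[\eta]\;\leq\;C\bigl(1+\mathbb{E}[Z_0^+]\bigr)\;<\;\infty,
\]
since $Z_0\in L^1$.

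\textbf{Case $(\mathbf{D})+(\mathbf{M}_\eta)$ and main obstacle.} Here stochastic dominance is replaced by the $L^{1+\eta}$ bound on increments. A Foster--Lyapunov argument with the Lyapunov function $V=f^{1+\eta}$ (or $V(x)=(f(x)+c)^{1+\eta}$ for a suitably chosen $c$) combines the drift $(\mathbf{D})$ with $(\mathbf{M}_\eta)$ to produce $\sup_{x\in K}\mathbb{E}_x[\tau^{1+\eta}]<\infty$, from which Markov's inequality gives $\sup_x\mathbb{P}_x(\tau\geq n)=O(n^{-(1+\eta)})$ and hence summability. The chief difficulty shared by both cases is the uniform-in-$x$ summable tail: applying Markov's inequality directly to $\tau$ using only $\sup_{x\in K}\mathbb{E}_x[\tau]<\infty$ gives only an $O(1/n)$ tail bound, which is not summable. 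What rescues the argument is that the coupling step (respectively, the higher-moment Lyapunov function) converts the underlying $L^1$ (respectively $L^{1+\eta}$) integrability hypothesis into genuine summability of $\sup_{x\in K}\mathbb{P}_x(\tau\geq n)$.
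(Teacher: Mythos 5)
Correct, and essentially the same strategy as the paper: compare $f(X_n)$ with an auxiliary i.i.d.\ walk of increments $\sim Z_1$ via stochastic dominance, obtain a first-passage bound linear in the starting level, integrate against the law of $Z_0\in L^1$, and use the higher-moment Lyapunov function $f^{1+\eta}$ for the $(\mathbf{D})+(\mathbf{M}_\eta)$ case. The only (cosmetic) difference is in the intermediate first-passage estimate: the paper invokes Foster's criterion \cite[Proposition 11.3.2]{MeyTweed12} to get $\mathbb{E}_t[\tau^{(Y)}]\leq(t-R_0)/\lambda_1$ directly, whereas you derive $\mathbb{E}[T_a]\leq C(1+a)$ via ladder epochs and Wald's identity---which is also valid, but the overshoot control under mere $L^1$-integrability that you dispatch with a ``standard truncation'' remark is exactly the technical annoyance that Foster's criterion avoids.
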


\bigskip
A weaker version of this result can be directly deduced from the so-called Foster's recurrence criterion, which only assumes negative drift and bounds the expectation of the return time $\tau$. This criterion, or at least an exponential variant, was used  in \cite{bq2, bq3}. It will also play a role in several proofs below so we record the precise statement. 

\begin{lemma}[Foster's criterion]\label{Foster}
Suppose $(\Omega, f, (X_{n}))$  satisfy conditions $(\mathbf{D})$ and $(\mathbf{M}_{0})$. Then for every $x\in \Omega\smallsetminus K$, we have 
$$\E_{x}(\tau)\leq \frac{f(x)}{\lambda_{1}}. $$
\end{lemma}  
In particular, the expectation of $\tau$ is uniformly bounded when the starting point of the chain varies in $K$, or in other terms $\sup_{x \in K}\sum_{n\geq 1}\mathbb{P}_{x}(\tau\geq n) <\infty$. The  goal of Lemma \ref{return-times} is precisely to strengthen this inequality by allowing to  switch the supremum and the sum.
\begin{proof}[Proof of Lemma \ref{Foster}] It is a classical result, see for instance  \cite[Proposition 11.3.2]{MeyTweed12}, or Foster's original paper \cite{Fost53}. We recall the proof for completeness. Let $x\in \Omega\smallsetminus K$ be a starting point. For $n\geq0$, set $Y_n=\1_{\tau>n}f(X_n)$, and $\cF_{n}$ the $\sigma$-algebra generated by $(X_{1}, \dots, X_{n})$. We can bound 
\begin{align*}
\E_{x}[Y_{n+1}|\cF_n] & = \E_{x}[\1_{\tau>n+1}f(X_{n+1})|\cF_n]\\
& \leq \E_{x}[\1_{\tau>n} f(X_{n+1})|\cF_n]\\
& = \1_{\tau>n} \E_{x}[f(X_{n+1})|\cF_n]\\
& \leq \1_{\tau>n}(f(X_n)-\lambda_{1}).
\end{align*}
where the case $n=0$ uses that $x\notin K$. Taking expectation, we obtain 
\[
0 \leq \E_x[Y_{n+1}] \leq \E_x[Y_n]-\lambda_{1} \PP_x(\tau>n)
\leq \dots \leq \E_x[Y_0] - \lambda_{1} \sum_{k=0}^n\PP_x(\tau>k)
\]
and passing to the limit, we conclude that
\[
\E_x[\tau] = \sum_{k\geq 0}\PP_x(\tau>k) \leq \frac{f(x)}{\lambda_{1}}.
\]
\end{proof}

\begin{proof}[Proof of Lemma \ref{return-times}]
\underline{Case 1:  $(\mathbf{SD})$ is satisfied}. 

 Denote  by $(Y_{n})$ the Markov chain on $\R$ with i.i.d. increments given by $Z_{1}$ and set 
$$\tau^{(Y)} = \{\inf n\geq 1, \,Y_{n} \leq  R_{0}\} $$ the first return time of $(Y_{n})$ to $(-\infty,R_{0}]$. Using the dominance condition $(\mathbf{SD})$, one sees by induction on $n$ that for every $x\in \Omega$ such that $f(x)>R_{0}$, we have 
\[
\mathbb{P}_{x}(\tau\geq n)\geq  \mathbb{P}_{f(x)}(\tau^{(Y)}\geq n).
\]
Now for $n\geq 2$, $x\in K$, we can write
\begin{align*}
\PP_{x}(\tau \geq n) &= \int_{(R,+\infty)}\PP_{X_{1}}(\tau \geq n-1)\,d\PP_{x}(X_{1})\\
&\leq \int_{(R,+\infty)}\PP_{f(X_{1})}(\tau^{(Y)} \geq n-1)\,d\PP_{x}(X_{1})\\
&\leq \int_{(R,+\infty)}\PP_{R+Z_{0}}(\tau^{(Y)} \geq n-1)\,d\PP(Z_{0})
\end{align*} 
where the last inequality is obtained using assumption $(\mathbf{SD})$ and the fact that $t\mapsto \PP_{t}(\tau^{(Y)} \geq n-1)$ is increasing on $(R,+\infty)$. Observing that the right-hand side no longer depends on $x$ and summing over $n$ we get 
\begin{align*}
\sum_{n\geq 2}\sup_{x \in K}\mathbb{P}_{x}(\tau\geq n) &\leq  \int_{(R,+\infty)} \sum_{n\geq 2}\PP_{R+Z_{0}}(\tau^{(Y)} \geq n-1)\,d\PP(Z_{0})\\
&=\int_{(R,+\infty)} \E_{R+Z_{0}}(\tau^{(Y)})\,d\PP(Z_{0}).
\end{align*} 
However, Foster's criterion  (Lemma \ref{Foster}) gives for every $t> R$, the bound $\E_{t}(\tau^{(Y)})\leq (t-R)/\lambda_{1}$. Hence we may conclude that
\begin{align*}
\sum_{n\geq 2}\sup_{x \in K}\mathbb{P}_{x}(\tau\geq n) 
&\leq \frac{1}{\lambda_{1}}\int_{(R,+\infty)} Z_{0}\,d\PP(Z_{0})\\
&<\infty.
\end{align*}

\noindent \underline{Case 2:  $(\mathbf{D})$ and $(\mathbf{M_{\eta}})$ are satisfied from some $\eta>0$}.

We can assume $\eta\in (0,1)$.
Since $\PP_x(\tau\geq n)\leq\frac{1}{n^{1+\eta}}\E_x[\tau^{1+\eta}]$, it is enough to prove that the family of return times $(\tau \,|\,X_{0}=x)_{x\in K}$ is uniformly bounded in $L^{1+\eta}$. More generally, we show that for every $x\in \Omega$, 
\begin{align} \label{return-times-0}
\mathbb{E}_{x}(\tau^{1+\eta})\leq \left(\frac{f(x)}{\lambda_{1}}\right)^{1+\eta} +M\frac{f(x)}{\lambda_{1}} +M_{1}
\end{align}
where $M_{1}= 1+4\left((M^{\frac{1}{1+\eta}}+R_{0})^{1+\eta} +\frac{M(M+R_{0})}{\lambda_{1}}  \right)$.

We first bound the return time for a starting point oustide of $K$. On the one hand, notice that we already know by Foster's criterion that for every $x\in \Omega\smallsetminus K$, $\E_{x}(\tau)\leq \frac{f(x)}{\lambda_{1}}$.
In particular,  Jensen's inequality gives
\begin{align} \label{return-times-1}
\E_{x}(\tau^\eta)\leq \left(\frac{f(x)}{\lambda_{1}}\right)^\eta.
\end{align} 
On the other hand, the combination of $(\mathbf{D})$ and $(\mathbf{M_{\eta}})$ yields a strong negative drift for $f(X_{1})^{1+\eta}$ as we now explain. Observe that the inequality  
\begin{equation*}
(1+t)^{1+\eta}\leq 1+(1+\eta)t+\abs{t}^{1+\eta} 
\end{equation*}
where $-1\leq t \leq +\infty$, 
applied to $t=\frac{f(X_{1})-f(x)}{f(x)}$
gives
\[
f(X_{1})^{1+\eta} \leq f(x)^{1+\eta} + (1+\eta)\left(f(X_{1})-f(x)\right)f(x)^{\eta} + \abs{f(X_{1})-f(x)}^{1+\eta},
\]
Taking expectation, we obtain for every $x\in \Omega\smallsetminus K$, 
\begin{align} \label{return-times-2}
\E_{x}(f(X_{1})^{1+\eta}) \leq f(x)^{1+\eta} - (1+\eta)\lambda_{1} f(x)^{\eta} + M.
\end{align}

Inequalities (\ref{return-times-1}) and (\ref{return-times-2}) allow to  apply a general result of Tweedie \cite[Theorem 3(iii)]{Tweed83} (with $A=K$, $\psi(k)=(1+\eta)\lambda^{1+\eta}_{1}k^\eta-M$, $g(x)=f(x)^{1+\eta}$, $g_{1}(x)= (1+\eta)\lambda_{1} f(x)^{\eta} - M$) to obtain for all $x\in \Omega\smallsetminus K$,   
$$\E_{x}(\tau^{1+\eta})\leq \left(\frac{f(x)}{\lambda_{1}}\right)^{1+\eta} +M\E_{x}(\tau)$$
so by Foster's criterion again, for  $x\in \Omega\smallsetminus K$,
\[
\E_{x}(\tau^{1+\eta})\leq \left(\frac{f(x)}{\lambda_{1}}\right)^{1+\eta} +M\frac{f(x)}{\lambda_{1}}.
\]
The general bound announced in (\ref{return-times-0})  for every $x$ in $\Omega$ follows using the moment condition $(\mathbf{M_{\eta}})$.     

\end{proof}

\subsection{Conservation of mass} \label{Sec2.2}

 We show conclusion \ref{pro} in Theorem \ref{thD}:  random walks on $\Omega$ satisfying the stochastic dominance assumption $(\mathbf{SD})$,  or having a negative drift and increments uniformly bounded in $L^{1+\eta}$, have no escape of mass. 

\begin{proposition}\label{rec-law}
Suppose $(\Omega, f, (X_{n}))$   satisfy condition $(\mathbf{SD})$, or conditions $(\mathbf{D})$ and $(\mathbf{M}_{\eta})$ for some $\eta>0$. Then for all $\eps>0$, there exists $R> 0$ such that for  $x\in\Omega$,  $n\geq 1$,
\[
\PP_x[f(X_n)>R] \,<\, \eps + \frac{f(x)}{n\lambda_{1}}
\]
\end{proposition}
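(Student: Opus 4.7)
The plan is to reduce the estimate to a uniform bound over starts in $K=\{f\leq R_0\}$, and then to exploit the excursion structure of the chain in and out of $K$. Let $\sigma=\inf\{n\geq 0 : X_n\in K\}$ and let $\tau=\inf\{n\geq 1 : X_n\in K\}$ be the first return time of Lemma~\ref{return-times}. First I would note that under $(\mathbf{SD})$, stochastic dominance by $Z_1$ outside $K$ gives $\E_x[f(X_1)-f(x)]\leq\E[Z_1]=-\lambda_1$, so $(\mathbf{D})$ holds; Foster's criterion then yields $\E_x[\sigma]\leq f(x)/\lambda_1$, and Markov's inequality $\PP_x(\sigma>n)\leq f(x)/(n\lambda_1)$. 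Applying the strong Markov property at $\sigma$ reduces the proof to
\[
\PP_x(f(X_n)>R)\leq\frac{f(x)}{n\lambda_1}+\sup_{y\in K,\,k\geq 0}\PP_y(f(X_k)>R),
\]
so it suffices to make the last supremum smaller than $\eps$ for $R$ large.

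For $y\in K$ and $k\geq 1$, since $R>R_0$ forces $X_k\notin K$, I would decompose $\{f(X_k)>R\}$ according to the last visit $m\in\{0,\dots,k-1\}$ to $K$ strictly before time $k$. The Markov property at $m$ yields
\[
\PP_y(f(X_k)>R)\leq\sum_{m=0}^{k-1}\E_y\Bigl[\1_{X_m\in K}\,\PP_{X_m}\bigl(\tau>k-m,\ f(X_{k-m})>R\bigr)\Bigr].
\]
The inner probability, for $y'=X_m\in K$, admits two complementary uniform bounds: first, $\PP_{y'}(\tau>k-m)\leq c_{k-m}$ where $c_n:=\sup_{y'\in K}\PP_{y'}(\tau>n)$ is summable by Lemma~\ref{return-times}; second, since on $\{\tau>k-m\}$ the value $f(X_{k-m})$ lies in an excursion from $K$,
\[
\PP_{y'}\bigl(\tau>k-m,\ f(X_{k-m})>R\bigr)\leq\eta(R):=\sup_{y'\in K}\PP_{y'}\Bigl(\sup_{0\leq i<\tau}f(X_i)>R\Bigr).
\]

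The key point is then to show $\eta(R)\to 0$ as $R\to\infty$. Under $(\mathbf{SD})$ I would argue this by coupling: the conditional stochastic dominance allows one to bound the excursion $(f(X_i)-f(y'))_{0\leq i<\tau}$ pathwise from above by a random walk on $\R$ whose first step is dominated by $Z_0$ and whose subsequent steps are dominated by i.i.d. copies of $Z_1$. Since $\E[Z_1]=-\lambda_1<0$, the strong law of large numbers forces $S_n\to-\infty$, hence $\sup_n S_n<\infty$ almost surely, and $\eta(R)\leq\PP(R_0+Z_0+\sup_n S_n>R)\to 0$. Under $(\mathbf{D})+(\mathbf{M}_\eta)$, the same estimate should follow from a maximal inequality applied to a suitable supermartingale derived from $f$, exploiting the uniform $L^{1+\eta}$ bound on increments.

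Finally, splitting the sum at an integer $N$---using the bound $c_{k-m}$ when $k-m\geq N$ and $\eta(R)$ when $k-m<N$---I obtain
\[
\sup_{y\in K,\,k\geq 0}\PP_y(f(X_k)>R)\leq\sum_{n\geq N}c_n+N\,\eta(R).
\]
Given $\eps>0$, first choose $N$ with $\sum_{n\geq N}c_n<\eps/2$, then $R$ with $N\eta(R)<\eps/2$. Combined with the initial reduction, this yields the proposition. I expect the main obstacle to be the uniform excursion-height bound $\eta(R)\to 0$: under $(\mathbf{SD})$ the coupling above handles it cleanly, but treating $(\mathbf{D})+(\mathbf{M}_\eta)$ without exponential moments requires a more delicate supermartingale or truncation argument.
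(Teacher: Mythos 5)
Your approach is correct, and it is a cousin of the paper's proof rather than a replica. Both arguments run on the same fuel, the summability from Lemma~\ref{return-times} of the uniform return-time tails $c_n=\sup_{y\in K}\PP_y(\tau>n)$; the difference is in how this is packaged. You decompose $\PP_y(f(X_k)>R)$ over the last visit to $K$ and split the resulting sum at $N$, bounding long excursions by $\sum_{n\geq N}c_n$ and short ones by your excursion-height quantity $\eta(R)$. The paper instead encapsulates the summability into Lemma~\ref{renewal} (the chain revisits $K$ within any window $\{n-l,\dots,n\}$ with probability $\geq 1-\eps/2$) and then pairs it with a crude short-time marginal bound, choosing $R$ so that $\sup_{x\in K}\PP_x(f(X_m)>R)<\eps/2$ for $m\leq l$. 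Your $\eta(R)$ is in fact stronger than needed: since only the finitely many excursion lengths $j<N$ contribute to the short part of the sum, you could replace $\eta(R)$ by $\max_{j<N}\sup_{y'\in K}\PP_{y'}(f(X_j)>R)$, which tends to $0$ by Markov's inequality and the uniform $L^1$ bound on increments, and thereby avoid any maximal inequality --- that is essentially the paper's route. Finally, the step you flag as the obstacle, namely $\eta(R)\to 0$ under $(\mathbf{D})+(\mathbf{M}_\eta)$, is not actually problematic and in fact the same argument covers both hypothesis sets at once: $(\mathbf{SD})$ implies $(\mathbf{D})$ as you observe, and under $(\mathbf{D})$ the process $(f(X_{n\wedge\tau}))_{n\geq1}$ is a nonnegative supermartingale, so Doob's maximal inequality conditioned on $X_1$ gives $\eta(R)\leq\sup_{y\in K}\E_y[\min(1,f(X_1)/R)]\to 0$, using only that $\sup_{y\in K}\E_y[(f(X_1)-f(y))^+]<\infty$; no coupling and no truncation argument are needed.
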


\begin{remark}
Choosing $\eps$ very small, we see that the first time  $n\geq1$ such that $\PP_x[f(X_n)>R] <1$ is bounded above by $\frac{f(x)}{\lambda_{1}(1-\eps)}\simeq \frac{f(x)}{\lambda_{1}}$. When $f(x)\to +\infty$, this bound is essentially optimal, as one can see by considering a deterministic walk by translation of $-\lambda_{1}$, for which the return time is equal to $\frac{f(x)-R}{\lambda_1}$.
\end{remark}

The key is to use Lemma \ref{return-times}  to  show  the following  renewal estimate.

\begin{lemma}[Renewal estimate]
\label{renewal}
Suppose $(\Omega, f, (X_{n}))$   satisfy condition $(\mathbf{SD})$, or conditions $(\mathbf{D})$ and $(\mathbf{M}_{\eta})$ for some $\eta>0$. Then for any $\alpha>0$, there exists $l\geq 0$,  such that for all $x\in K$, $n\geq l$, 
\[
\mathbb{P}_{x}(\exists i\in \{ n-l, \dots,n\}:  \, X_{i}\in K )> 1-\alpha.
\]
\end{lemma}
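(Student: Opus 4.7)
The plan is to exploit the summability of the return time tails from Lemma~\ref{return-times} via a first-and-last-visit decomposition. Introduce the successive return times to $K$:
\[
\tau_{0}=0, \qquad \tau_{k+1} = \inf\{m > \tau_{k} \ |\ X_{m}\in K\},
\]
and observe that on the event ``no visit to $K$ in $\{n-l,\dots,n\}$'' there is a \emph{last} visit at some deterministic time $j\in\{0,1,\dots,n-l-1\}$, meaning $X_{j}\in K$ and $X_{j+1},\dots,X_{n}\notin K$. For $n=l$ the event is impossible because $X_{0}=x\in K$, so from now on assume $n>l$.

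Decomposing over the value of $j$ and applying the (strong) Markov property at time $j$, one gets for every $x\in K$:
\begin{align*}
\PP_{x}\!\left(X_{i}\notin K\text{ for all }i\in\{n-l,\dots,n\}\right)
&= \sum_{j=0}^{n-l-1} \PP_{x}\!\left(X_{j}\in K,\, X_{j+1},\dots,X_{n}\notin K\right)\\
&= \sum_{j=0}^{n-l-1} \E_{x}\!\left[\1_{\{X_{j}\in K\}}\,\PP_{X_{j}}(\tau > n-j)\right]\\
&\leq \sum_{j=0}^{n-l-1} \sup_{y\in K}\PP_{y}(\tau \geq n-j+1).
\end{align*}
Setting $m=n-j+1$ and bounding the finite sum by the full tail, the last expression is at most
\[
\sum_{m\geq l+2}\sup_{y\in K}\PP_{y}(\tau \geq m),
\]
which is independent of $n$ and $x$.

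By Lemma~\ref{return-times}, the series $\sum_{m}\sup_{y\in K}\PP_{y}(\tau\geq m)$ converges (under either of the two hypotheses), so its tail tends to $0$ as $l\to\infty$. Choosing $l$ large enough so that this tail is smaller than $\alpha$ concludes the proof. There is no serious obstacle here; the only point to be a little careful about is that the decomposition ``last visit at time $j$'' makes the events disjoint and compatible with a single application of the Markov property, which is what lets us factor out $\sup_{y\in K}\PP_{y}(\tau\geq\cdot)$.
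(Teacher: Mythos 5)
Your argument is correct, and it reaches the same quantitative bound (a tail of the series $\sum_m \sup_{y\in K}\PP_y(\tau\geq m)$), but via a different decomposition than the paper. The paper decomposes on the \emph{first} return time $\tau$ and proceeds by induction on $n$: it defines $A_l=0$, $A_{n+1}=A_n+\sup_{y\in K}\PP_y(\tau>n)$, and shows inductively that $\PP_x(\forall i\in I_n,\ X_i\notin K)\leq A_n$, the induction step peeling off the first excursion via the Markov property at time $\tau=k$. You instead use a \emph{last-exit} decomposition: on the event that $K$ is avoided throughout $\{n-l,\dots,n\}$, there is a last visit to $K$ at a deterministic time $j\leq n-l-1$ (guaranteed nonempty because $X_0=x\in K$), and the Markov property at time $j$ produces the tail $\PP_{X_j}(\tau\geq n-j+1)$ directly, with no recursion. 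The two proofs rely on exactly the same engine — the summability in Lemma~\ref{return-times} — but your last-exit version is shorter and avoids the inductive bookkeeping, at the price of having to note carefully (as you do) that the last-visit events are pairwise disjoint and partition the avoidance event. Both give the same choice of $l$, namely one making the tail of the series smaller than $\alpha$.
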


In words, this lemma states that for any interval of integers $I\subseteq \N$ of large length $l$, the walk starting from an arbitrary point in $ K$ has a very good chance to come back to $ K$ during $I$.

\begin{proof}
Let $\alpha>0$. 
According to Lemma \ref{return-times},  we may choose $l\geq 0$ such that
\[
\sum_{n\geq l}\sup_{x\in K}\PP_x(\tau>n) < \alpha.
\]
Let us check that $l$ satisfies the conclusion of the lemma. Given $n\geq l$, it is convenient to set $I_{n}= \{ n-l, \dots,n\}$. Assume that  $A_{n}\in \R^+$ is a constant such that for $x\in K$, $k \in \{l, \dots, n\}$,  we have 
\[
\mathbb{P}_{x}(\forall i\in I_{k},  \, X_{i}\notin K )\leq  A_{n}.
\]
We can write
\begin{align*}
\mathbb{P}_{x}(\forall i\in I_{n+1},  \,X_{i}\notin K )\,&\leq  \,\mathbb{P}_{x}(\forall i\in I_{n+1},  \,X_{i}\notin K ; \tau \leq n  ) + \mathbb{P}_{x}(\tau >n)\\
&\leq \sum_{k=1}^{n-l} \,\mathbb{P}_{x}(\forall i\in I_{n+1},  \,X_{i}\notin K ; \tau =k  ) + \mathbb{P}_{x}(\tau >n)\\
&\leq \sum_{k=1}^{n-l} \,A_{n}\mathbb{P}_{x}(\tau =k ) + \mathbb{P}_{x}(\tau >n)\\
&\leq A_{n} + \mathbb{P}_{x}(\tau >n).
\end{align*}
Hence, defining  a sequence $(A_{n})_{n\geq l}$ by 
\[
\left\{\begin{array}{l}
A_l=0\\
\forall n\geq l,\ A_{n+1} = A_n + \sup_{x \in K}\PP_x(\tau>n)
\end{array}\right.
\]
we can see by induction on $n$ using the above inequality that for all $x\in K$, $n\geq l$, 
\[
\mathbb{P}_{x}(\forall i\in I_{n},  \, X_{i}\notin K )\leq  A_{n} < \alpha.
\]
\end{proof}

\bigskip

Let us now prove the conservation of mass anounced above. 

\begin{proof}[Proof of Proposition \ref{rec-law}]

We first deal with the case where $x\in K$.  We denote by $l\geq 0$ the constant given by  Lemma \ref{renewal} for $\alpha=\eps/2$, then choose the constant $R>0$ large enough so that for every $x\in K$ and every $n\leq l$, 
\[
\mathbb{P}_{x}( f(X_{n})>R )< \eps/2.
\]
By Lemma \ref{renewal}, this yields for  $x\in K$, and arbitrary $n\geq l$ 
\begin{align*}
\mathbb{P}_{x}( f(X_{n})>R ) &<  \eps/2 + \mathbb{P}_{x}( f(X_{n})>R;  \,\,\exists i\in \{ n-l,\dots, n\},  X_{i}\in K  ) \\
&<\eps.
\end{align*}
To deal with the complementary case $x\in \Omega \smallsetminus K$, denote by $\tau$ the first return time to $K$ and notice that for $n\geq 0$, 
\begin{align*}
\mathbb{P}_{x}( f(X_{n})>R) &< \mathbb{P}_{x}( f(X_{n})>R;\,\tau \leq n)+  \mathbb{P}_{x}( \tau > n)  \\
&< \eps + \frac{\mathbb{E}_{x}[\tau]}{n}.
\end{align*}
In view of Foster's criterion recalled in Lemma \ref{Foster}, we have $\mathbb{E}_{x}[\tau] \leq  \frac{f(x)}{\lambda_{1}}$ as soon as $f(x)>R_{0}$. Hence, we obtain 
\begin{align*}
\PP_x \left(f(X_n)>R\right) < \eps +\frac{f(x)}{n\lambda_{1}}
\end{align*}
which finishes the proof of Proposition \ref{rec-law}
\end{proof}

\bigskip
 We conclude Section \ref{Sec2.2} by a counterexample to Proposition \ref{rec-law} when the walk $(\Omega, f, X_{n})$ is only assumed to satisfy $(\mathbf{D})$ and $(\mathbf{M}_{0})$.

\begin{example}[Escape of mass] \label{counterex-L1}
Let  $[0,\frac{1}{2}]\rightarrow [0,1), x\mapsto \alpha_{x}$ be a continuous map such that $\alpha^{-1}(0)=0$. We consider  the  Markov chain $(X_n)_{n\geq 0}$ on $\Omega=[0,+\infty)$ whose transition probability measures are given by
\[
\left\{\begin{array}{ll}
\mu_{0}=\delta_{0}   \\
\mu_{x}=(1-\alpha_{x})\delta_{x}+ \alpha_{x}\delta_{\alpha^{-1}_{x}} & \text{if $0<x\leq 1/2$}  \\
\mu_{x}=(2x-1)\delta_{0}+ (2-2x)\mu_{\frac{1}{2}} & \text{if $1/2\leq x\leq 1$}  \\
\mu_{x}=\delta_{x-1} & \text{if $x\geq  1$.} \\
\end{array}\right.
\]
Note that $\mu_{x}$ depends continously on $x$.  
Letting $f=\text{Id}$, the random walk $(X_n)_{n\geq 0}$  satisfies  
\[
\sup_{x\in \Omega} \mathbb{E}_{x}[|f(X_{1})-f(x)|]\leq 1
\quad\text{and}\quad
\sup_{x \in (1,+\infty)} \mathbb{E}_{x}[f(X_{1})-f(x)]= -1.
\]
However, given a fixed $x_{0}\in (0,1/2]$, one can choose the family $(\alpha_x)_{x\in[0,1/2]}$ so that for every $R>0$, there exists $n\geq0$ such that
\begin{align}\label{counterex-0}
\mathbb{P}_{x_{0}}(f(X_{n})>R)\geq 1/2.
\end{align} 
To see this, first observe that a trajectory with origin $x_{0} \in (0,\frac{1}{2}]$  stays at $x_{0}$ for some time, then jumps out of $[0,1]$ and comes  back to it with constant increments of $-1$ until reaching a point $x_{1}\in [0,1]$.  Then, if $x_{1}\in (0, \frac{1}{2}]$, the process repeats itself to give some point $x_{2}\in [0,1]$, and so on. Arguing step by step, we can choose the coefficients $\alpha_{x}$ such that the return points $x_0, x_{1},x_{2}, \dots$ form an infinite sequence in $(0,\frac{1}{2}]$, satisfy $x_{i}>x_{i+1}$, $(x_{i})_{i\geq 0}\to 0$, and so that the sequence $(\alpha_{x_{i}})_{i\geq0}$ decreases  to $0$ fast enough to have for every $i\geq1$, some $n_{i}\geq1$ for which
\[
\mathbb{P}_{x_0}(X_{n_{i}}=x_{i})\geq 1-1/i.
\]
Let us now check the property \eqref{counterex-0} announced above. Fix $R>0$.
For $i,k\geq0$, 
\begin{align}\label{counterex-1}
\mathbb{P}_{x_0}(X_{n_{i}+k}\leq R) &=  \mathbb{P}_{x_0}(X_{n_{i}+k}\leq R; X_{n_{i}}=x_{i})+ \mathbb{P}_{x_0}(X_{n_{i}+k}\leq R; X_{n_{i}}\neq x_{i}) \nonumber\\
&\leq  \mathbb{P}_{x_{i}}(X_{k}\leq R)+ 1/i.
\end{align}
Assume $i$ to be large enough so that $\alpha_{x_{i}}^{-1}>R$ and set $k_{i}=\lfloor \alpha^{-1}_{x_{i}} -R \rfloor$  the greatest integer smaller  than $\alpha^{-1}_{x_{i}} -R$.
We can then write
\begin{align}\label{counterex-2}
 \mathbb{P}_{x_{i}}(X_{k_{i}}\leq R)&= \mathbb{P}_{x_{i}}(X_{1}, \dots, X_{k_{i}}= x_{i})  \nonumber\\ 
& =(1-\alpha_{x_{i}})^{k_{i}}  \nonumber\\
&= e^{k_{i}\log(1-\alpha_{x_{i}})} 
\underset{i\to+\infty}{\longrightarrow} e^{-1}.
\end{align}
Equations (\ref{counterex-1}) and (\ref{counterex-2}) give that for large $i$, 
\[
\mathbb{P}_{x_0}(X_{n_{i}+k_{i}}\leq R)\leq 1/2
\]
as announced in (\ref{counterex-0}). 
\end{example}

\subsection{Recurrence of empirical measures} \label{Sec2.3}

We  show conclusion \ref{tra} in Theorem \ref{thD}:  random walks on $\Omega$ satisfying the stochastic dominance assumption ($\mathbf{SD}$), or having a negative drift and increments uniformly bounded in $L^{1+\eta}$,   must have positive recurrent trajectories.

\begin{proposition} \label{rec-traj}
Suppose $(\Omega, f, (X_{n}))$   satisfy condition $(\mathbf{SD})$, or conditions $(\mathbf{D})$ and $(\mathbf{M}_{\eta})$ for some $\eta>0$. For every $\eps>0$, there exists $R>0$ such that for  every $x\in \Omega$,  $\PP_{x}$-almost every $(X_{n})$, 
\[
\limsup_{n\to +\infty} \frac{1}{n} \card \{k\in \{1,\dots, n\}\ |\ f(X_{k})> R\} < \eps.
\]
\end{proposition}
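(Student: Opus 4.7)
The strategy will closely parallel the proof of Proposition~\ref{rec-law}: exploit the excursion decomposition at return times to $K = \{f \leq R_0\}$, using the uniform tail summability $\sum_n \sup_{y \in K} \PP_y(\tau \geq n) < \infty$ from Lemma~\ref{return-times}, now to derive an almost-sure estimate via a strong law of large numbers for the per-excursion counts above level $R$.

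First, I would reduce to $x \in K$: the initial transient $\tau_0 := \inf\{n \geq 0 : X_n \in K\}$ is $\PP_x$-a.s.\ finite by Foster's criterion, so contributes $o(n)$ to $N_n(R)/n$, where $N_n(R):=\card\{k\in\{1,\dots,n\}: f(X_k)>R\}$. For $x \in K$, set $\tau_0 = 0$, $\tau_{j+1} = \inf\{n > \tau_j : X_n \in K\}$, $T_j := \tau_{j+1} - \tau_j \geq 1$, and
\[
S_j := \card\{k \in (\tau_j, \tau_{j+1}] : f(X_k) > R\}.
\]
By the strong Markov property at $\tau_j$, conditional on $\mathcal{F}_{\tau_j}$ the pair $(T_j, S_j)$ is distributed as $(\tau, \sum_{k=1}^\tau \1_{f(X_k) > R})$ under $\PP_{\tilde X_j}$, where $\tilde X_j := X_{\tau_j} \in K$. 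Since $T_j \geq 1$, the count $J_n := \max\{j : \tau_j \leq n\}$ of excursions completed by time $n$ satisfies $J_n \leq n$; hence $N_n(R) \leq \sum_{j=0}^{J_n} S_j \leq \sum_{j=0}^{n} S_j$, and it suffices to prove $\limsup_N \frac{1}{N} \sum_{j=0}^{N} S_j < \eps$ almost surely.

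The quantitative heart of the argument is to show that the uniform expected excursion cost
\[
\sigma(R) := \sup_{y \in K} \E_y\left[\sum_{k=1}^\tau \1_{f(X_k) > R}\right]
\]
tends to $0$ as $R \to \infty$. Writing $\sigma(R, y) = \sum_{k \geq 1} \PP_y(\tau \geq k, f(X_k) > R)$, I split at a large threshold $K_0$: the tail $\sum_{k > K_0} \sup_{y' \in K} \PP_{y'}(\tau \geq k)$ is arbitrarily small by Lemma~\ref{return-times}; for each fixed $k \leq K_0$, iterating the stochastic dominance bounds $f(X_k) - f(X_0)$ by a sum of $k$ i.i.d.\ integrable random variables, so $\PP_y(f(X_k) > R) \to 0$ uniformly in $y \in K$ as $R \to \infty$. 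Choose $R$ with $\sigma(R) < \eps/2$.

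The final step is an almost-sure LLN for the adapted sequence $(S_j)$, which satisfies $\E[S_j \mid \mathcal{F}_{\tau_j}] \leq \sigma(R)$, so $\sum_{j=0}^{N-1}(S_j - \sigma(R))$ is a supermartingale. Under $(\mathbf{D}) + (\mathbf{M}_\eta)$, inequality~(\ref{return-times-0}) yields a uniform $L^{1+\eta}$-bound on $T_j \geq S_j$, and the $L^{1+\eta}$ martingale SLLN directly gives $\limsup_N \frac{1}{N} \sum_{j=1}^{N} S_j \leq \sigma(R)$ a.s. The main obstacle is the case $(\mathbf{SD})$, where only uniform $L^1$-bounds on excursion lengths are available: I expect the a.s.\ LLN there to be obtained via a truncation $S_j^{(L)} := S_j \wedge L$, applying the bounded-increment martingale SLLN of Kolmogorov to the truncated sum and absorbing the uniform residual $\E_y[S_j - S_j^{(L)}] \leq L\sup_{y'}\PP_{y'}(\tau>L) + \sum_{k > L} \sup_{y'} \PP_{y'}(\tau \geq k)$, both vanishing as $L \to \infty$ by the summability of Lemma~\ref{return-times}, into a further slack of size $\eps/2$. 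Combining gives $\limsup_n N_n(R)/n < \eps$ $\PP_x$-almost surely.
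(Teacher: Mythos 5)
Your strategy is essentially the one used in the paper: decompose into excursions at return times to $K=\{f\leq R_0\}$, show that the uniform expected excursion cost $\sigma(R)=\sup_{y\in K}\E_y[Z^{\mathrm{ext}}_1]$ tends to $0$ as $R\to\infty$, and then invoke a strong law of large numbers for the adapted sequence of excursion costs. The reduction to $x\in K$ and the computation of $\sigma(R)$ are both sound (the paper gets $\PP_y(f(X_i)>R)\to 0$ by Markov's inequality using the first-moment bound, which is a touch more direct than iterating stochastic dominance, but either works).

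The gap is in the final step under $(\mathbf{SD})$, and you acknowledge some uncertainty there. Truncating at a \emph{fixed} level $L$ and arguing that the residual $(S_j-S_j\wedge L)$ has uniformly small conditional expectation does not by itself give almost-sure control of $\limsup_N \frac{1}{N}\sum_{j}(S_j-S_j\wedge L)$; that limsup is again a random quantity, and bounding its expectation by $\delta(L)$ is not the same as bounding it almost surely. To control the residual sum a.s.\ you would need an almost-sure LLN for that nonnegative adapted sequence — which is the very problem you started with, so the argument is circular as written. The standard way to close it is the Kolmogorov-type SLLN with truncation at a \emph{growing} level $j$, together with the summability of tails (Borel--Cantelli plus a Chow-type variance bound): this is exactly what the paper invokes as Kolmogorov's law of large numbers for dependent variables, \cite[Theorem~A.6]{BQRW}, applied via the summable conditional tail bound $\PP_x(Z^{\mathrm{ext}}_i\geq n\mid \mathcal{F}_{i-1})\leq \sup_{y\in K}\PP_y(\tau\geq n)$ from Lemma~\ref{return-times}. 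Replacing your fixed-level truncation with a citation of that theorem (or with a full proof of it, truncating at level $j$) closes the gap and recovers the paper's argument; note that this single theorem handles both the $(\mathbf{SD})$ and the $(\mathbf{D})+(\mathbf{M}_\eta)$ cases uniformly, so the separate $L^{1+\eta}$ treatment is not needed.
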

\begin{proof}
We start  with some notation.
Let $\tau_{i}$ be  the $i$-th return time in $K$.  More precisely,   $\tau_{0}=0$ by convention, and for all $i\geq 1$, 
\[
\tau_{i}= \inf\{n> \tau_{i-1}\ |\ X_{n}\in K \}.
\]
We also let $Z_{i}= \tau_{i}-\tau_{i-1}$ be the length of the  $i$-th excursion and
\[
Z^{ext}_{i}=\card \{n \in \{ \tau_{i-1}, \dots,\tau_{i}-1\}\ |\ f(X_{n})> R \}
\]
the time spent outside of $\{f\leq R\}$ during the excursion. 

To prove Proposition \ref{rec-traj}, it is sufficient to show that if $R>0$ is large enough, then for any $x\in K$,  $\mathbb{P}_{x}$-almost surely,  one has for large $n\geq0$,
\begin{align}\label{rec-traj-1}
\sum_{i=1}^n Z^{ext}_{i} <\eps \sum_{i=1}^{n-1} Z_{i}.
\end{align}
We explain how to obtain (\ref{rec-traj-1}). The assumption  $(\mathbf{SD})$, or the conditions $(\mathbf{D})$ and $(\mathbf{M}_{\eta})$, both guarantee that 
\begin{align} \label{rec-traj-2}
\sup_{x\in K}\PP_{x}(Z^{ext}_{1}\neq 0) \underset{R\to +\infty}{\longrightarrow} 0.
\end{align}
Indeed, we can write for $x\in K$, 
\begin{align*}
\PP_{x}(Z^{ext}_{1}\neq 0) &\leq \PP_{x}(Z^{ext}_{1}\neq 0;\, \tau \leq N)+\PP_{x}(\tau\geq N)\\
&\leq \PP_{x}(\exists i\leq N, f(X_{i})>R;\, \tau \leq N)+\PP_{x}(\tau\geq N)\\
& \leq \sum_{i\leq N}  \PP_{x}(f(X_{i})>R)+\PP_{x}(\tau\geq N)\\
&\leq  \frac{N(R_{0}+NM)}{R}+\sup_{y\in K}\PP_{y}(\tau\geq N)\\
\end{align*}
where $M>0$ is a fixed uniform $L^1$-bound on the positive increment of the walk, i.e. such that $\mathbb{E}_{x}[\max\left(0, f(X_{1})-f(x)\right)]<M$.  The right-hand side of the inequality does not depend on $x\in K$.  In view of Lemma \ref{return-times}, we can choose $N$ large, then $R$ even larger, to have the right-hand side arbitrarily close to $0$, whence  (\ref{rec-traj-2}). 

Observing that $\E_{x}(Z^{ext}_{1})\leq \E_{x}(\tau \1_{Z^{ext}_{1}\neq 0})$ and  using the uniform integrability of the variables $(\tau\,|\,X_{0}=x)_{x\in K}$  given by Lemma \ref{return-times}, we deduce from (\ref{rec-traj-2}) that 
\[
\sup_{x\in K}\E_{x}(Z^{ext}_{1}) \underset{R\to+\infty}{\longrightarrow} 0.
\]
In particular, we can choose $R$ so that $\sup_{x\in K}\E_{x}(Z^{ext}_{1})<\eps/2$.
Fix $x\in K$.  Denote by $\mathcal{F}_{i}$ the  sub $\sigma$-algebra of $\Omega$ generated by $(X_{0}, \dots, X_{\tau_{i}})$. The random variables $Z^{ext}_{i}$, $i\geq 1$, statisfy for $n\geq 1$,
\[
\mathbb{P}_{x} (Z^{ext}_{i}\geq n \,|\, \mathcal{F}_{i-1}) \leq \sup_{y\in K}\PP_{y}(\tau\geq n).
\]
As the right-hand side of the inequality is summable by Lemma \ref{return-times},  we may apply  Kolmogorov's law of large numbers for dependent variables  \cite[Theorem A.6]{BQRW} and obtain that $\mathbb{P}_{x} $-almost surely, 
\[
\frac{1}{n}\sum_{i=1}^n Z^{ext}_{i}  = \frac{1}{n}\sum_{i=1}^n \mathbb{E}_{x}[Z^{ext}_{i} \,|\, \mathcal{F}_{i-1}] +o(1).
\]
Moreover, by our choice for $R$, we have   $\mathbb{E}_{x}[Z^{ext}_{i} \,|\, \mathcal{F}_{i-1}] < \eps/2$.
We infer that for large $n\geq0$, 
\[
\frac{1}{n}\sum_{i=1}^n Z^{ext}_{i}  <  \eps\,\frac{1}{n}\sum_{i=1}^{n-1} Z_{i}
\]
keeping in mind that by definition $Z_{i}\geq 1$. This concludes the proof of inequality (\ref{rec-traj-1}), whence the proposition. 
\end{proof}

We conclude with a counterexample to Proposition \ref{rec-law} when the walk $(\Omega, f, X_{n})$ is only assumed to satisfy $(\mathbf{D})$ and $(\mathbf{M}_{0})$. 

\begin{example}[Escape of empirical measures] \label{counterex2-L1}
Given a real number $r\in \R$, we denote by $\lfloor r \rfloor$ the largest integer $k$ such that $k\leq r$.
Fix a sequence $(x_{i})_{n\geq 3}\in (0,1)^{\N_{\geq 3}}$ decreasing to $0$, and consider the Markov chain $(X_{n})_{n\geq 0}$ on $\Omega=[0,+\infty)$ whose transition probability measures are given by 
\[
\left\{\begin{array}{ll}
\mu_0 = \delta_0 & \\
\mu_{x_{i}}=(1-\frac{1}{i\log i})\delta_{x_{i+1}}+ \frac{1}{i\log i}\delta_{x_{i+1}+ \lfloor i \sqrt{\log i}\rfloor} & \text{ for $i\geq 3$}\\
\mu_{x}=\delta_{x-1} & \text{ if  $x> 1$}\\
x\mapsto \mu_{x}\ \text{affine on each segment}\ [x_{i+1},x_i]. &
\end{array}\right.
\]
The chain $(X_{n})_{n\geq 0}$ has a continuous family of transition probability measures and satisfies, for $f=\text{Id}$,
\[
\sup_{x\in \Omega} \mathbb{E}_{x}[|f(X_{1})-f(x)|]\leq 3
\quad\text{and}\quad
\sup_{x \in (1,+\infty)} \mathbb{E}_{x}[f(X_{1})-f(x)]= -1.
\]
However, for $\mathbb{P}_{x_{3}}$-almost every trajectory $(X_{n})_{n\geq 0}$, for every $\eps, R>0$, there exists $n_{0}\geq 1$ such that 
\begin{align}\label{counterex2}
\frac{1}{n_{0}} \card \{k\in \{1,\dots, n_{0}\}\ |\ f(X_{k})\leq R\}< \eps.
\end{align}
To see this, denote by $\tau_{j}$ the $j$-th return time to $[0,1]$. In other words,    $\tau_{0}=0$ by convention, and for all $j\geq 1$, 
\[
\tau_{j}= \inf\{n> \tau_{j-1}\ |\ X_{n}\in [0,1]\}.
\]
The random variables $(\tau_{j}-\tau_{j-1})_{j\geq 1}$ varying under $\mathbb{P}_{x_{3}}$  are independent  of  respective laws
\begin{align*}
(1-\frac{1}{(j+2)\log (j+2)})\delta_{1}+ \frac{1}{(j+2)\log (j+2)}\delta_{\lfloor (j+2) \sqrt{\log (j+2)}\rfloor +1 } \tag{$j\geq 1$}
\end{align*}
As the sequence $(\frac{1}{n\log n})_{n\geq 3}$ is not summable, the converse of the Borel-Cantelli lemma yields that  $\mathbb{P}_{x_{3}}$-almost surely we have $\limsup_{j\to +\infty}\frac{\tau_{j}-\tau_{j-1}}{j}=+\infty$.
In particular, given $\eps, R>0$, there exists $j_{0}\geq 1$ such that 
\[
j_{0}<\frac{\eps}{R+1}\,\tau_{j_{0}}.
\] 
Setting $n_{0}=\tau_{j_0}$, we get 
\[
\card \{k\in \{1,\dots, n_{0}\}\ |\ X_{k}\leq 1\}< \frac{\eps}{R+1} n_{0}.
\]
Finally, observing that for any $k\geq 0$ such that $X_{k}\leq R$ there exists a time $j\in \{k, \dots, k+\lfloor R\rfloor\}$ for which $X_{j}\in [0,1]$, we conclude 
\[
\card \{k\in \{1,\dots, n_{0}\}\ |\ X_{k}\leq R\}< \eps n_{0}
\]
as announced in (\ref{counterex2}).
\end{example}

\section{Recurrence criterion for  homogeneous chains} 
\label{Sec3}

 We apply the results of the previous section to the particular case of homogeneous random walks and obtain a handy criterion to check that the mass or the empirical measures of the walk do not accumulate near a given subset (Proposition \ref{cd-rec} below).
This criterion will be applied in two different contexts, first to prove non-escape at infinity, and later to derive non-accumulation on invariant homogeneous subspaces. 
 
 \subsection{The controlled drift condition}
 In the whole section we fix $\Omega$ a locally compact topological space, $\Gamma$ a topological semigroup acting continously on $\Omega$, and $\mu$ a Borel probability measure on $\Gamma$. We call $\mu$-walk on $\Omega$ the Markov chain on $\Omega$ whose law of transition at $x\in \Omega$ is  $\mu\ast \delta_{x}$, image of $\mu$ under $g\mapsto gx$. 
We denote by $\overline{\Omega}=\Omega \cup \{\infty\}$ the Alexandroff compactification of $\Omega$, and write  $\mathscr{P}(\overline{\Omega})$ the collection of its subsets. Given $\cS \subseteq \mathscr{P}(\overline{\Omega})\times \mathscr{P}(\overline{\Omega})$, we have a decomposition  $\cS=\cup_{Y\in p_{1}(\cS)}\{Y\}\times \cS_{Y}$  where $p_{1}$ is the first coordinate projection map and  $\cS_{Y}\subseteq\mathscr{P}(\overline{\Omega})$ is the fibre above $Y$ in $\cS$.  Finally, given a finite-dimensional real  vector space $V$, we set $\SL^{\pm}(V)=\{g\in\GL(V)\ |\ \det g=\pm 1\}$.

\begin{definition}\label{cd} 
Let $\cS \subseteq \mathscr{P}(\overline{\Omega})\times \mathscr{P}(\overline{\Omega})$, $\cA \subseteq \mathscr{P}(\Omega)$. We say that the $\mu$-walk has a \emph{controlled drift away from $\cS$ on $\cA$} if:
\begin{itemize}
\item
There exists a linear representation $\Phi : \Gamma\rightarrow\SL^{\pm}(V)$ on a finite-dimensional normed real vector space $V$ such that
\[
\int_{G}\log \|\Phi(g)\|\, d\mu(g)<+\infty.
\]
 \item
 There exist constants $\lambda,C>0$   such that for any $\alpha>0$, there exists $N\geq 1$ such that for $n\geq N$, for any $Y\in p_{1}(\cS)$, any compact subset $K\subseteq \Omega$ such that $K\cap Y=\varnothing$, any $Q\in \cA$, there is $Y'\in \cS_{Y}$, and a measurable function $f:\Omega\to[0,+\infty]$ which is bounded on $K$, whose upper level sets are neighborhoods of $Y'$, and which satisfies
\begin{enumerate}[label=(\roman*)]
\item \label{pdec} $\exists\, T\in [0,+\infty)$, $\forall x\in Q\cap f^{-1}[T,+\infty)$, 
\begin{align*} \tag{probable decrease}
\mu^{*n}(g\in G\ |\ f(gx)\leq f(x)-n\lambda) \geq 1-\alpha 
\end{align*}
\item \label{cvar} $\forall x\in f^{-1}[0,+\infty)$, $\forall g\in \Gamma$,  
\begin{align*} \tag{control of variations}
f(gx)-f(x) \leq C\log \norm{\Phi (g)}.
\end{align*}
\end{enumerate}
\end{itemize}
\end{definition}

\begin{remark}
1) Strictly speaking, the definition expresses a drift away from some $Y'\in \cS_{Y}$ which could have nothing to do with $Y$. However, in all the applications below, we will always have that $Y\subseteq Y'$ for all $Y'\in \cS_{Y}$, so in particular, the walk does drift away from $Y$ too. We choose to sum it up using the terminology of drift away from $\cS$. 

 3) The condition \ref{cvar} implies that $f^{-1}[0,+\infty)$ is $\Gamma$-invariant. 

 2) If $\cA=\{\Omega\}$  then we simply say that the $\mu$-walk has a \emph{controlled drift away from $\cS$}.  If additionally,  $\cS=\{\infty, \infty\}$,  we talk about \emph{controlled drift away from infinity}. Note that $f$ is proper in this case: its sublevel sets have compact closure in $\Omega$.  

4) The map $\Phi$ will later be referred to as the \emph{control function} for the drift. Indeed, its role is to bound the growth of $f$ in the unlikely event its value goes up along the walk. 
\end{remark}

To get more familiar with the notion of controlled drift, we start by exploring some stability properties. Given $\cS_{0}, \cS_{1}\subseteq \mathscr{P}(\overline{\Omega})\times \mathscr{P}(\overline{\Omega})$, we denote by $\cS_{0} \vee \cS_{1}$  the set of pairs $(Y, Y'_{0}\cup Y'_{1})$ where $Y\in p_{1}(\cS_{0}) \cap p_{1}(\cS_{1})$, $Y'_{0}\in \cS_{0,Y}$, $Y'_{1}\in \cS_{1,Y}$.

\begin{lemma}
 \label{drift-union} 
\begin{itemize}
\item[a)] If the $\mu$-walk on $\Omega$ has a controlled drift away from both $\cS_{0}, \cS_{1}$, then it has a controlled drift away from  $\cS_{0} \vee \cS_{1}$. 
\item[b)] \label{du2} If the $\mu$-walk on $\Omega$ has a controlled drift away from infinity and a controlled drift away from  $\cS$ on compact sets of $\Omega$, then it has a controlled drift away from  $\cS$. 
\end{itemize}
\end{lemma}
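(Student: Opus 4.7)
The plan is to construct the combined drift function in each case from the drift functions provided by the hypotheses, using the direct sum of the representations for the new $\Phi$ and combining the thresholds additively.

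For part a), given $Y\in p_{1}(\cS_{0})\cap p_{1}(\cS_{1})$, a compact $K$ with $K\cap Y=\varnothing$, and $Q\in\cA$, I apply each drift hypothesis at parameter $\alpha/4$ to obtain $Y_{i}'\in\cS_{i,Y}$, $f_{i}$, $T_{i}$, and constants $\lambda_{i},C_{i},\Phi_{i}$. Setting $Y':=Y_{0}'\cup Y_{1}'$, $f:=\max(f_{0},f_{1})$, $\lambda:=\min(\lambda_{0},\lambda_{1})$, $\Phi:=\Phi_{0}\oplus\Phi_{1}\in\SL^{\pm}$, and $C:=\max(C_{0},C_{1})$, the conditions of boundedness on $K$, of the neighborhood property $\{f\geq t\}=\{f_{0}\geq t\}\cup\{f_{1}\geq t\}$ being a neighborhood of $Y'$, and of control of variations for $f$ follow at once. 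For the probable decrease, Markov's inequality on the integrable $\log\|\Phi_{i}(g_{n}\cdots g_{1})\|$ (whose expectation is at most $nM_{i}$ with $M_{i}:=\int\log\|\Phi_{i}\|\,d\mu$) yields $\log\|\Phi_{i}\|\leq B_{i}(n,\alpha)$ with probability $\geq 1-\alpha/4$. Choosing $T$ large enough that $T\geq\max_{i}T_{i}+\max_{j}C_{j}B_{j}+n\lambda$, whichever $f_{i}$ realizes the maximum satisfies $f_{i}(x)\geq T_{i}$ and hence drifts, while the other $f_{j}$ either also drifts or is controlled by $f_{j}(gx)\leq T_{j}+C_{j}B_{j}\leq f(x)-n\lambda$; a union bound over the drift and Markov events delivers the required $1-\alpha$ probability.

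For part b), I first apply the drift away from infinity at parameter $\alpha/4$ to obtain a proper function $f_{\infty}$ with threshold $T_{\infty}$, and then take $Q$ to be a compact neighborhood of $K\cup\{f_{\infty}\leq T_{\infty}\}$ disjoint from $Y$ (feasible since $Y\cap\Omega$ is closed, possibly after slightly enlarging $T_{\infty}$). I apply the drift away from $\cS$ on $Q$ at parameter $\alpha/4$ to obtain $Y'\in\cS_{Y}$, $f_{\cS}$, $T_{\cS}$. I then take $f:=f_{\infty}+f_{\cS}$, $\Phi:=\Phi_{\infty}\oplus\Phi_{\cS}$, $C:=C_{\infty}+C_{\cS}$. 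The upper level sets of $f$ contain those of $f_{\cS}$ (hence are neighborhoods of $Y'$), $f$ is bounded on $K$ (each summand is), and control of variations is direct. For the probable decrease at a threshold $T>T_{\infty}+T_{\cS}$ I split on $x$: if $f_{\infty}(x)>T_{\infty}$, I use the drift of $f_{\infty}$ and a Markov bound on $\log\|\Phi_{\cS}(g_{n}\cdots g_{1})\|$ to control the possible growth of $f_{\cS}$; if $x\in\{f_{\infty}\leq T_{\infty}\}$ with $f_{\cS}(x)\geq T_{\cS}$, I exchange the roles; the remaining sub-region $\{f_{\infty}\leq T_{\infty}\}\cap\{f_{\cS}<T_{\cS}\}$ has $f(x)\leq T_{\infty}+T_{\cS}<T$ and is vacuous.

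The main obstacle is calibrating the constants in part b): in each drift region the drifting function decreases by $n\lambda_{\bullet}$ while the other may grow by up to $C_{\bullet}\log\|\Phi_{\bullet}(g_{n}\cdots g_{1})\|$, whose size is bounded with probability $\geq 1-\alpha/4$ by the Markov estimate $4nC_{\bullet}M_{\bullet}/\alpha$ (and, for large $n$, by the sharper weak-law estimate $nC_{\bullet}(\lambda_{1}^{\bullet}+\epsilon)$ with $\lambda_{1}^{\bullet}$ the top Lyapunov exponent of $\Phi_{\bullet}$). Taking the threshold $T$ large enough as a function of $n$ and $\alpha$ to absorb this growth term and $\lambda>0$ sufficiently small yields a net drift $\leq-n\lambda$ with probability $\geq 1-\alpha$ in both regions, closing the proof.
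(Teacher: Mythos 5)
Your part a) matches the paper's argument: both take $f=\max(f_0,f_1)$ and $\lambda=\min(\lambda_0,\lambda_1)$, and show that whichever $f_i$ realizes the maximum drifts while the other is either below its threshold (hence controlled by the variation bound and a suitably large $T$) or also drifts. The bookkeeping differs slightly (you invoke Markov explicitly where the paper folds it into the choice of $T$), but the idea is the same.

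Part b) has a genuine gap. With $f=f_\infty+f_\cS$, in your case (i) where $f_\infty(x)>T_\infty$, the summand $f_\infty$ decreases by $n\lambda_\infty$ while $f_\cS$ can rise by $C_\cS\log\norm{\Phi_\cS(g_n\cdots g_1)}$, which for large $n$ concentrates near $nC_\cS\lambda_{1,\cS}$ with $\lambda_{1,\cS}$ the top Lyapunov exponent of $\Phi_{\cS*}\mu$. Nothing rules out $C_\cS\lambda_{1,\cS}>\lambda_\infty$, in which case the increment of $f$ is typically \emph{positive} on all of $\{f_\infty>T_\infty\}$. The symmetric failure occurs in case (ii). Enlarging $T$ does not help: it shrinks the domain on which probable decrease must hold but does not reduce the per-step increment, which scales linearly in $n$ regardless of $T$. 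Shrinking $\lambda$ cannot help either, since by Definition~\ref{cd} the constant $\lambda>0$ must be fixed \emph{before} $\alpha$ and $n$, and a positive increment defeats every $\lambda>0$. The paper avoids this by replacing the flat sum with $f(x)=c_xf_0(x)+f_1(x)$, where $c_x$ is a piecewise-linear weight equal to $0$ when $f_0(x)\leq T_0$, equal to a \emph{large} constant $c$ when $f_0(x)\geq T'$, and interpolating between. The constant $c$ is calibrated a priori so that $c\lambda_0/2>2C_1\lambda_{1,\infty}$, so the amplified drift of $cf_0$ dominates the Lyapunov growth of $f_1$ when $f_0$ is large; while on $\{f_0\leq T_0\}$ the weight vanishes, and after one $\mu^{*n}$-step $f_0$ rises only to about $T_0+C_0cn$, so $c_{gx}$ is of order $c^2C_0n/(T'-T_0)$, made negligible by taking $T'$ large compared with $n$. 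This vanishing cutoff is precisely what your construction lacks: without it, $f_\infty$ starts contributing the moment it is positive, and its Lyapunov-driven rise overwhelms the drift of $f_\cS$.
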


We explain \emph{a)}. The proof of the second  assertion is much more involved and postponed to Section \ref{Sec5} where it will be used.

\begin{proof}[Proof of Lemma \ref{drift-union} a)]

We  respectively index by $0$ and $1$ the notations referring to $\cS_{0}$, $\cS_{1}$. Let  $C_{i}, \lambda_{i}$ be the constants given by the controlled drift assumption \ref{cd} for $\cS_{i}$.  Set $C=\max(C_{0}, C_{1})$, $\lambda=\min(\lambda_{0}, \lambda_{1})$. Let $\alpha>0$, then $N_{i}$ as in \ref{cd}. Set $N=\max(N_{0}, N_{1})$, $n\geq N$. Let $Y\in p_{1}(\cS_{0}) \cap p_{1}(\cS_{1})$ and consider a compact set $K\subset \Omega\smallsetminus Y$. 
Let $Y'_{i}, f_{i}, T_{i}$ be given by \ref{cd}. We set $Y'=Y'_{0}\cup Y'_{1}$, $$f=\max(f_{0}, f_{1})$$
 and claim that the map $f$ satisfies all the requirements in Definition \ref{cd} (with $2\alpha$ instead of $\alpha$). Indeed, all the conditions are obvious except the one entitled ``probable decrease''.   To check it, we choose $T\geq \max(T_{0}, T_{1})$ and large enough so that  
$$\mu^{*n}(g\in \Gamma \,|\, T_{0}+C_{0}\Phi_{0}(g)\leq T-n\lambda) \geq 1-\alpha$$
and the same holds when we replace $0$ by $1$. 
Now let $x\in f^{-1}[T,+\infty)$. If $f_{1}(x)=f(x)$, then 
$$ \mu^{*n}(g\in \Gamma \,|\, f_{1}(gx)\leq f(x)-n\lambda_{1}) \geq 1-\alpha$$
On the other hand, distinguishing the cases $f_{0}(x)<T_{0}$ and $f_{0}(x)\geq T_{0}$, the condition on $T$ guarantees that 
$$ \mu^{*n}(g\in \Gamma \,|\, f_{0}(gx)\leq f(x)-n\lambda) \geq 1-\alpha$$
so combining these inequalities 
$$ \mu^{*n}(g\in \Gamma \,|\, f(gx)\leq f(x)-n\lambda) \geq 1-2\alpha.$$
The same holds if $f_{0}(x)=f(x)$ by symmetry.

\end{proof}

 \subsection{Application to recurrence}

The proposition below states that a walk that drifts away from $\cS$ does not accumulate on $\cS$.
In particular, when $\cS=\{\infty, \infty\}$, the conclusion exactly means that the walk has neither escape of mass nor escape of empirical measures at infinity. 
 
 \begin{proposition} \label{cd-rec}
 Let $(\Omega, G, \mu)$ be as above and  $\cS \subseteq \mathscr{P}(\overline{\Omega})\times \mathscr{P}(\overline{\Omega})$. Assume that the $\mu$-walk on $\Omega$ has a controlled drift away from $\cS$. Then for every $Y\in p_{1}(\cS)$, every compact set  $K\subseteq \Omega\smallsetminus Y$, there is $Y'\in \cS_{Y}$ such that for every $\eps>0$, for some neighborhood $O'$ of $Y'$, for all $x\in K$, 
  
 \begin{enumerate}[label=(\roman*)]
\item \label{cd-rec1} for every $n\geq 0$, $(\mu^{*n}*\delta_x)(O')<\eps$;
\item  \label{cd-rec2} for $\mu^{\otimes\N^*}$-almost every instructions $(g_i)_{i\geq 1}$,
\[
\limsup_{n\to+\infty}\frac{1}{n}\card\{k\in\{1,\dots,n\}\ |\ g_k\dots g_1x\in O'\}< \eps.
\]
\end{enumerate}
 \end{proposition}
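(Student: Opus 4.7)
The plan is to apply Propositions~\ref{rec-law} and~\ref{rec-traj} to the $n_0$-step chain $(X_{kn_0})_{k\geq 0}$, where $f$ and $n_0$ arise from the controlled drift hypothesis, after verifying the stochastic dominance condition $(\mathbf{SD})$ via Lemma~\ref{crit-SD}.

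Fix $Y\in p_1(\cS)$, a compact $K\subseteq\Omega\smallsetminus Y$, and $\eps>0$. Pick $\alpha>0$ small (to be calibrated below), and let $\lambda,C>0$ and $N\geq 1$ be the constants furnished by the controlled drift assumption; choosing any $n_0\geq N$, we obtain $Y'\in\cS_Y$, a function $f$ and a level $T$ with the probable decrease and control of variations properties. Set
\[
Z \,:=\, C\sum_{i=1}^{n_0}\log\|\Phi(g_i)\|, \qquad (g_i)_{i=1}^{n_0} \text{ i.i.d.\ of law } \mu.
\]
Since $\Phi(g)\in\SL^{\pm}(V)$ forces $\|\Phi(g)\|\geq 1$, the variable $Z$ is non-negative and, by the log-moment assumption on $\Phi$, integrable. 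Submultiplicativity of the operator norm combined with the control of variations yields the pointwise bound $f(X_{n_0})-f(x)\leq Z$ from any starting point $x\in f^{-1}[0,+\infty)$, hence stochastic dominance by $Z$; the probable decrease gives $\PP_x[f(X_{n_0})\leq f(x)-n_0\lambda]\geq 1-\alpha$ whenever $f(x)>T$. Integrability of $Z$ implies $\E[Z'\1_{[0,\alpha]}]\to 0$ as $\alpha\to 0$, so we may tighten $\alpha$ enough that $\lambda_1:=n_0\lambda(1-\alpha)-\E[Z'\1_{[0,\alpha]}]>0$; Lemma~\ref{crit-SD} then delivers $(\mathbf{SD})$ for the chain $(X_{kn_0})_{k\geq 0}$ with sublevel set $\{f\leq T\}$ and drift $\lambda_1$.

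Propositions~\ref{rec-law} and~\ref{rec-traj} applied to this chain yield, for any $\eps'>0$, a threshold $R_0$ such that for all $x\in\Omega$ and $k\geq 1$,
\[
\PP_x[f(X_{kn_0})>R_0]<\eps'+\tfrac{f(x)}{k\lambda_1}, \qquad \limsup_k \tfrac{1}{k}\card\{i\leq k:f(X_{in_0})>R_0\}<\eps'\ \text{a.s.}
\]
Fix $E>0$ with $\PP[Z>CE]<\eps/4$, set $R:=R_0+CE$ (enlarging so that $R>\sup_K f$), and take $O':=\{f>R\}$: this is a neighborhood of $Y'$ disjoint from $K$. Writing $n=kn_0+r$ with $0\leq r<n_0$ and $W_k:=\sum_{i=kn_0+1}^{(k+1)n_0}\log\|\Phi(g_i)\|$, the control of variations yields
\[
\{f(X_n)>R\} \,\subseteq\, \{f(X_{kn_0})>R_0\}\,\cup\,\{W_k>E\},
\]
where $W_k$ is independent of $X_{kn_0}$ and distributed as $Z/C$. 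For conclusion~\ref{cd-rec1}, a union bound gives $\PP_x[X_n\in O']<\eps'+f(x)/(k\lambda_1)+\eps/4$; since $f$ is bounded on $K$, choosing $\eps'<\eps/2$ handles all $k\geq k_0$ for some $k_0$, while the finitely many remaining small times $n$ (including those not multiples of $n_0$) are treated by a direct Markov inequality on $\sum_{i=1}^{n}\log\|\Phi(g_i)\|$, enlarging $R$ a final time. For conclusion~\ref{cd-rec2}, summing the inclusion over $r\in\{0,\dots,n_0-1\}$ and then over $k\leq\lfloor n/n_0\rfloor$ gives
\[
\tfrac{1}{n}\sum_{j=0}^n\1_{f(X_j)>R} \,\leq\, \tfrac{n_0}{n}\sum_{k=0}^{\lfloor n/n_0\rfloor}\bigl(\1_{f(X_{kn_0})>R_0}+\1_{W_k>E}\bigr);
\]
letting $n\to\infty$, the first sum contributes at most $\eps'$ by Proposition~\ref{rec-traj} applied to the $n_0$-chain, while the second contributes at most $\PP[W_1>E]<\eps/4$ by the strong law for the i.i.d.\ sequence $(\1_{W_k>E})$.

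The main technical obstacle will be the calibration of the constants in the correct order---$\alpha$ first (to secure $\lambda_1>0$), then $n_0\geq N(\alpha)$, then $\eps'$, $E$, $R_0$, $R$---so that all error terms accumulate below $\eps$, together with the separate treatment of small times $n$ in~\ref{cd-rec1} where the bound $f(x)/(n\lambda_1)$ from Theorem~\ref{thD} is not yet effective and must be supplemented by a Markov bound using the finite log-moment of $\Phi$.
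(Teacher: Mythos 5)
There is a genuine gap in the calibration of $\alpha$ and $n_0$, and it is exactly the kind of circularity the paper introduces Lemma~\ref{standard} to break. Your claim reads: ``Integrability of $Z$ implies $\E[Z'\1_{[0,\alpha]}]\to 0$ as $\alpha\to 0$, so we may tighten $\alpha$ enough that $\lambda_1>0$.'' This is true for a \emph{fixed} $Z$, but in your construction $Z$ depends on $n_0$, which you chose as $n_0\geq N(\alpha)$, where $N(\alpha)$ is the threshold furnished by the controlled drift hypothesis. So if you tighten $\alpha$, you must re-choose $N(\alpha)$, hence $n_0$, hence $Z$, and $\E[Z'\1_{[0,\alpha]}]$ is not simply a decreasing function of $\alpha$: both the cutoff and the random variable move at once. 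Your own ordering at the end (``$\alpha$ first (to secure $\lambda_1>0$), then $n_0\geq N(\alpha)$'') is self-contradictory, because you cannot check $\lambda_1>0$ before $Z$ (hence $n_0$) exists. It is not clear a priori that any pair $(\alpha,n_0)$ satisfies conditions 1 and 3 of Lemma~\ref{crit-SD} simultaneously.

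The paper resolves this by the law of large numbers for the cocycle $\log\norm{\Phi(\cdot)}$ (Lemma~\ref{standard}): writing $Z_n$ for $\log\norm{\Phi(g)}$ under $\mu^{*n}$, the $L^1$ convergence $\frac{1}{n}Z_n\to l$ gives, for every $\eta>0$, a single $\alpha$ (depending only on $l$ and $\eta$, not on $n$) and an $N'$ such that $\E[Z_n'\1_{[0,\alpha]}]\leq n\eta$ for all $n\geq N'$. Choosing $\eta=\lambda/(2C)$ and $\alpha_0<1/2$ as in that lemma, and then $n_0\geq\max(N(\alpha_0),N')$, one gets $\lambda_1\geq n_0\lambda(1/2-\alpha_0)>0$ for any such $n_0$, which is precisely what makes Lemma~\ref{SDmun} work. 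This LLN step is absent from your argument. The remainder of your proof---decomposing $n=kn_0+r$, the event inclusion $\{f(X_n)>R\}\subseteq\{f(X_{kn_0})>R_0\}\cup\{W_k>E\}$, and the passage from the $n_0$-chain to the full chain for both (i) and (ii)---tracks the paper closely and is correct, including the separate treatment of small $n$ in (i) by a Markov bound.
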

 
 The proof can be summarized as follows. If $\alpha>0$ is small enough, $n\geq N$ is large enough, then for every $Y\in p_{1}(\cS)$, every compact set $K\subseteq \Omega\smallsetminus Y$, the function $f$ given by the drift property \ref{cd} satisfies the assumptions of Lemma \ref{crit-SD} for the $\mu^{*n}$-random walk on $f^{-1}[0,+\infty)$ for some $n\in\N^*$, whence the stochastic dominance condition $(\mathbf{SD})$. Theorem \ref{thD} then applies and leads to non-accumulation results for the $\mu^{\ast n}$-walk on $f^{-1}[0,+\infty)$ which can be extended to the $\mu$-walk.

We begin with a lemma that will allow to check condition $\emph{3.}$ in Lemma \ref{crit-SD}.

\begin{lemma}
\label{standard}
Let  $\Phi: \Gamma\rightarrow\SL^{\pm}(V)$ be a linear representation of $\Gamma$ on a finite-dimensional normed vector space $V$ such that $$\int_{G}\log \|\Phi(g)\|\, d\mu(g)<+\infty $$ Then for all $\eta>0$, there exists $\alpha>0, N'\geq 0$ such that for all $n\geq N'$, if $Z_n'$ denotes the standard realisation of $Z_n=\log\norm{\Phi (g)}$ for $\mu^{*n}$, then
\[
\E[Z_n'\1_{[0,\alpha]}] \leq n \eta.
\]
\end{lemma}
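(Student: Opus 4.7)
The plan is to reduce the problem to an analysis of iid sums via submultiplicativity of the operator norm, then use the $L^{1}$ law of large numbers to control the mass of $Z_{n}$ at large values.

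First, I would sample $g \sim \mu^{*n}$ as a product $g = g_{1} \cdots g_{n}$ with iid $\mu$-distributed factors. Since $\|\Phi(g)\| \leq \|\Phi(g_{1})\| \cdots \|\Phi(g_{n})\|$ and $\|\Phi(g_{i})\| \geq 1$ (as $\Phi(g_{i}) \in \SL^{\pm}(V)$), the random variable $Z_{n} = \log \|\Phi(g)\|$ is nonnegative and is stochastically dominated by $W_{n} := \sum_{i=1}^{n} Y_{i}$, where $Y_{i} := \log\|\Phi(g_{i})\|$ are iid, nonnegative, and integrable with mean $\beta := \int \log\|\Phi\|\,d\mu < \infty$. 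In particular, $Z_{n}'(s) \leq W_{n}'(s)$ for every $s \in [0,1]$, so it is enough to bound $\int_{0}^{\alpha} W_{n}'(s)\,ds$.

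Next, I would use the Fubini-type formula for the truncated integral of the standard realisation of a nonnegative variable:
\[
\int_{0}^{\alpha} W_{n}'(s)\,ds \;=\; \int_{0}^{+\infty} \min\bigl(\alpha,\,\PP(W_{n} > t)\bigr)\,dt,
\]
which follows by writing $\int_{0}^{\alpha} W_{n}'(s)\,ds = \int_{0}^{\alpha}\int_{0}^{W_{n}'(s)} dt\,ds$ and swapping the integrals, using that $W_{n}'(s) > t \iff \PP(W_{n} > t) \geq s$ up to a measure-zero set. I would then split at $T = n(\beta+1)$ to get
\[
\int_{0}^{\alpha} W_{n}'(s)\,ds \;\leq\; \alpha\,n(\beta+1) \;+\; \int_{T}^{+\infty}\PP(W_{n}>t)\,dt \;=\; \alpha\,n(\beta+1) \;+\; \E\bigl[(W_{n}-T)_{+}\bigr].
\]

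Finally, the $L^{1}$ strong law of large numbers applied to the iid sequence $(Y_{i})$ gives $W_{n}/n \to \beta$ in $L^{1}$, hence $\E[(W_{n}-n\beta)_{+}] = o(n)$ as $n \to \infty$; a fortiori $\E[(W_{n}-n(\beta+1))_{+}] = o(n)$. Given $\eta > 0$, I choose $\alpha>0$ small enough so that $\alpha(\beta+1) \leq \eta/2$, and then $N'$ large enough so that $\E[(W_{n}-n(\beta+1))_{+}] \leq n\eta/2$ for all $n \geq N'$. Combining the two bounds yields $\int_{0}^{\alpha} Z_{n}'(s)\,ds \leq n\eta$, as desired. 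No step appears to be a real obstacle: submultiplicativity is immediate, the Fubini identity is elementary once one recalls the definition of the standard realisation, and the $L^{1}$-LLN is classical for integrable iid variables; the only point to watch is the correct handling of the level-set computation in the Fubini step.
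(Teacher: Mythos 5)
Your proof is correct, but it follows a genuinely different path from the paper's. The paper invokes the law of large numbers for products of random matrices (Lemma 4.27 in the Benoist--Quint book, essentially the $L^1$ Furstenberg--Kesten theorem, which rests on Kingman's subadditive ergodic theorem): it gives $\tfrac{1}{n}Z_n \to l$ in $L^1$ for the Lyapunov exponent $l$, and then the bound
\[
\tfrac{1}{n}\E[Z_n'\1_{[0,\alpha]}] \;\leq\; \alpha l \;+\; \E\Bigl[\,\bigl|\tfrac{1}{n}Z_n' - l\bigr|\,\Bigr]
\]
finishes in two lines. You instead dominate $Z_n$ pointwise by the iid sum $W_n = \sum_i \log\|\Phi(g_i)\|$ via submultiplicativity and the observation that $\|A\|\geq 1$ for $A\in\SL^\pm(V)$ (which, incidentally, does hold for the operator norm associated to \emph{any} norm on $V$: if $\|A\|<1$ then $A$ maps the unit ball strictly inside itself, forcing $|\det A|<1$). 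From there the Fubini identity, the split at $T=n(\beta+1)$, and the classical iid $L^1$-LLN close the argument. What your route buys is that it uses only the elementary iid law of large numbers rather than the subadditive ergodic machinery; what it costs is a weaker implicit constant (your $\beta=\E[\log\|\Phi(g)\|]$ can exceed the Lyapunov exponent $l$, so your admissible $\alpha$ is smaller), which is entirely harmless for the purpose of the lemma. The only delicate step, the level-set computation behind the Fubini identity, you flag correctly: the equivalence $W_n'(s)>t \iff \PP(W_n>t)\geq s$ holds only up to a Lebesgue-null set of $s$, which is exactly what is needed.
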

\begin{proof}

The law of large numbers \cite[Lemma 4.27]{BQRW} guarantees that there exists a constant $l\geq 0$ such that the sequence of real random variables $(\frac{1}{n}Z_{n})_{n\geq1}$ converges to $l$ in $L^1$. This convergence also holds for  $(\frac{1}{n}Z'_{n})_{n\geq 1}$ as it has the same law. Now let $\alpha>0$, $n\geq 1$. 
\begin{align*}
\frac{1}{n}\E[Z_n'\1_{[0,\alpha]}] & \leq \alpha l + \E[\abs{\frac{1}{n}Z'_{n} -l}]\\
& \leq \alpha (l+1)
\end{align*}
if $n$ is large enough. The result follows if we specify $\alpha=\frac{\eta}{l+1}$ above.
\end{proof}

We now apply the results of Section~\ref{Sec2} with the drift function $f$ constructed above to derive \ref{thA'}.

\begin{lemma}\label{SDmun}
Assume that the $\mu$-walk on $\Omega$ has a controlled drift away from $\cS$. Then there exists $\alpha_{0}>0$, $n_{0}\geq N$ such that for any $Y\in p_{1}(S)$, any compact $K\subseteq \Omega\smallsetminus Y$, any map $f$ as in Definition \ref{cd}, the  $\mu^{\ast n_{0}}$-walk on $f^{-1}[0,+\infty)$ satisfies condition $(\mathbf{SD})$ with drift function $f$.
\end{lemma}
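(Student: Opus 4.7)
The plan is to apply Lemma~\ref{crit-SD} to the $\mu^{*n_0}$-walk on $f^{-1}[0,+\infty)$, using the drift function $f$ itself. The two explicit requirements of Definition~\ref{cd} feed almost directly into the hypotheses of that criterion: item \ref{pdec} (probable decrease at step $n$) plays the role of assumption~1 with decay $n\lambda$ and failure probability $\alpha$, while item \ref{cvar} (control of variations) yields assumption~2 pointwise if we take $Z := C\log\|\Phi(g)\|$ with $g$ sampled under $\mu^{*n_0}$. The sole nontrivial point is assumption~3 of Lemma~\ref{crit-SD}, which demands $\E[Z'\1_{[0,\alpha]}] < n_0\lambda(1-\alpha)$; this is exactly what Lemma~\ref{standard} is designed to produce.

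The calibration of parameters proceeds in three steps. First, fix $\eta>0$ with $\eta < \lambda/(4C)$, where $\lambda, C$ are the constants furnished by the controlled drift hypothesis. Lemma~\ref{standard} applied to the representation $\Phi$ and this $\eta$ produces $\alpha_0>0$ (which we may further shrink to ensure $\alpha_0\leq 1/2$) and $N'\geq 1$ such that
\[
\E[Z_n'\1_{[0,\alpha_0]}] \leq n\eta \quad\text{for all } n\geq N',
\]
where $Z_n'$ denotes the standard realisation of $\log\|\Phi(g)\|$ under $\mu^{*n}$. Second, the controlled drift assumption applied with $\alpha = \alpha_0$ furnishes an integer $N\geq 1$ as in Definition~\ref{cd}. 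Third, set $n_0 := \max(N, N')$. Crucially, the parameters $\alpha_0$ and $n_0$ depend only on $\mu$, $\lambda$, $C$, and $\Phi$, not on the choice of $Y$, $K$, or the specific $f$ provided by the drift hypothesis.

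Now fix arbitrary $Y \in p_1(\cS)$, a compact $K\subseteq \Omega\smallsetminus Y$, and the associated $f$. We verify the three conditions of Lemma~\ref{crit-SD} for the $\mu^{*n_0}$-walk on $f^{-1}[0,+\infty)$ with threshold $R_0:=T$, drift rate $n_0\lambda$, and failure probability $\alpha_0$. Condition~1 is precisely item \ref{pdec} at $n=n_0$. Condition~2 follows from item \ref{cvar}: the deterministic bound $f(gx)-f(x) \leq C\log\|\Phi(g)\|$ implies $\PP_x(f(X_1)-f(x)>t)\leq \PP(Z>t)$ with $Z = C\log\|\Phi(g)\|$ and $g\sim\mu^{*n_0}$; integrability of $Z$ comes from the finite first moment of $\log\|\Phi\|$ under $\mu$ together with submultiplicativity. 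For condition~3, since $Z = CZ_{n_0}$ and $C>0$, the standard realisations satisfy $Z' = CZ_{n_0}'$, whence
\[
\E[Z'\1_{[0,\alpha_0]}] \;=\; C\,\E[Z_{n_0}'\1_{[0,\alpha_0]}] \;\leq\; Cn_0\eta \;<\; \tfrac{n_0\lambda}{4} \;\leq\; n_0\lambda(1-\alpha_0).
\]
Lemma~\ref{crit-SD} then delivers $(\mathbf{SD})$ with the uniform constant $\lambda_1 = n_0\lambda(1-\alpha_0) - C\,\E[Z_{n_0}'\1_{[0,\alpha_0]}] > 0$.

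The main obstacle is conceptually simple but requires the preparatory Lemma~\ref{standard}: one must balance the failure probability $\alpha_0$ against the drift rate so that the worst-case tail $\E[Z'\1_{[0,\alpha_0]}]$ is dominated by the drift $n_0\lambda(1-\alpha_0)$. Since the drift is linear in $n_0$ while the tail contribution grows only like $n_0\eta$ for arbitrarily small $\eta$, the balance is achievable precisely by passing to a sufficiently high convolution power of $\mu$.
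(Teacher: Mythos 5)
Your proof is correct and follows essentially the same route as the paper: fix $\eta$ proportional to $\lambda/C$, invoke Lemma~\ref{standard} to obtain $\alpha_0$ and $N'$, take $n_0 = \max(N,N')$, and feed items \ref{pdec} and \ref{cvar} of Definition~\ref{cd} into Lemma~\ref{crit-SD}. The only difference from the paper is a harmless constant change ($\eta < \lambda/(4C)$ rather than $\eta = \lambda/(2C)$) and a slightly more explicit mention that integrability of $Z$ follows from the first-moment hypothesis on $\Phi$ and submultiplicativity.
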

\begin{proof}
Let $\lambda, C>0$ be given by Definition~\ref{cd}.
Choose a small $\alpha_{0}\in (0,1/2)$ as in Lemma \ref{standard} for $\eta=\frac{\lambda}{2C}$.  Fix an integer $n_{0}\geq \max(N,N')$ where $N, N'$ are the constants given respectively by Definition~\ref{cd} and Lemma \ref{standard} for our choice of $\alpha$.
Consider $Y\in p_{1}(\cS)$, a compact set $K\subseteq \Omega\smallsetminus Y$, and a map $f$ as in Definition~\ref{cd}.

We   show that the $\mu^{\ast n_{0}}$-random walk on $f^{-1}[0,+\infty)$ satisfies the stochastic dominance condition  $(\mathbf{SD})$. To do so, we only need to check that the assumptions of Lemma~\ref{crit-SD} do hold in the present context.  

\begin{enumerate}
\item By Definition \ref{cd}, there exists $T>0$ such that for $x\in f^{-1}[T,+\infty)$, 
\[
\mu^{*n_{0}}(g\in \Gamma\ |\ f(gx) \leq f(x)-n_{0}\lambda) \geq 1-\alpha_{0}.
\]
\item For every $x\in f^{-1}[0,+\infty)$, $g\in \Gamma$,  we have $f(gx)-f(x) \leq C\log \|\Phi(g)\|$.
In particular, denoting by $Z$ the variable $C\log \|\Phi(g)\|$ where $g$ varies with respect to $\mu^{\ast n_{0}}$, the law of $f(gx)-f(x)$ for $\mu^{\ast n_{0}}$ is stochastically dominated by $Z$.  
\item Lemma \ref{standard} and our choice of constants imply that, writing $Z'$ the standard realisation of $Z$,  
\[
\E[Z'\1_{[0,\alpha_{0}]}]\leq \frac{n_{0}\lambda }{2}.
\]
\end{enumerate}
Hence the conditions of Lemma~\ref{crit-SD} are satisfied, which implies that $(\mathbf{SD})$ holds for the $\mu^{\ast n_{0}}$-walk on $f^{-1}[0,+\infty)$. 
\end{proof}

\begin{proof}[Proof of Proposition \ref{cd-rec}]
Fix $\alpha_{0}>0$, $n_{0}\geq N$  as in Lemma \ref{SDmun},   $Y\in p_{1}(\Omega)$, a compact $K\subseteq \Omega\smallsetminus Y$, and let $Y'\in \cS_{Y}$, $f:\Omega\rightarrow [0,+\infty]$ be given by the drift condition  \ref{cd}.
Let us check the conclusions of the proposition with $O'=\{f> M\}$ for some sufficiently large $M$.
Lemma \ref{SDmun} combined with Theorem \ref{thD} already yields the result for the $\mu^{*n_{0}}$-walk: for any $\eps>0$, there is $R>0$ such that 
for all $x\in K$,

\begin{enumerate}[label=(\roman*)]
\item for every $q\geq 0$, $\mu^{\ast qn_{0}}(g\in \Gamma\ |\ f(gx) > R) < \eps$;
\item for $\mu^{\otimes\N^*}$-almost every trajectory $(g_i)_{i\geq 1}$,
\[
\limsup_{n\to+\infty}\frac{1}{q}\card\{k\in\{1,\dots, q\}\ |\ f(g_{kn_{0}}\dots g_{1}x)> R\} <\eps.
\]
\end{enumerate}

To conclude, we just check that \ref{cd-rec1} and  \ref{cd-rec2} also hold for $\mu$ (i.e. if $n_{0}=1$) up to increasing $R$ and doubling $\eps$.
\bigskip

For \ref{cd-rec1},  choose $R_1> 0$ such that for every $1\leq m<n_{0}$,
\[
\mu^{\ast m}(g\in G\ |\ \log  \norm{\Phi(g)}> R_1) \leq\eps.
\]
Let $R_2=R+CR_1$ and $n\geq0$. Writing  $ n =qn_{0}+m$ for some integers $q, m\geq 0$ with $m<n_{0}$,  we obtain for every $x\in K$
\begin{align*}
\mu^{\ast n}(g\in G\ |\ f(gx)>R_{2}) & \leq \mu^{\ast qn_{0}}(g\in G\ |\ f(gx)>R)\\
& \qquad + \mu^{\ast m}(g\in G\ |\ \log\norm{\Phi(g)}>R_{1}) \\
& < 2\eps.
\end{align*}

\bigskip
For \ref{cd-rec2}, the argument is very similar.
Given $R_{3}>0$, set for $q\geq 1$
\[ 
Y_q = \1_{\{\exists n\in \{qn_{0}+1, \dots, qn_{0}+n_{0}\}:\ \log\norm{\Phi(g_n\dots g_{qn_{0}+1})}>R_3\}}.
\]
The random variables $Y_q$ are independent, identically distributed, and  satisfy  $\E[Y_q]< \eta$ if $R_{3}$ is chosen large enough. 
Therefore, by the classical law of large numbers,
\[
\limsup_{q\to+\infty} \frac{1}{q}\card\{k\in\{1,\dots,q\}\ |\ \exists\, kn_{0}<n\leq n_{0}(k+1):\log\norm{\Phi(g_n\dots g_{kn_{0}+1})}>R_3\} < \eps.
\]
Setting $R_{4}=R+CR_{3}$, we get for $\mu^{\otimes \N^\ast}$-almost every $(g_{i})_{i\geq 1}\in G^{\N^\ast}$,
\[
\limsup_{n\to+\infty}\frac{1}{n}\card\{k\in\{1,\dots,n\}\ |\ f(g_{k}\dots g_{1}x)>R_4\} < 2\eps
\]
which finishes the proof of the proposition.
\end{proof}

\section{Non-escape at infinity}
\label{Sec4}

We now apply the results of the previous section to show that random walks on finite-volume homogeneous spaces have neither escape of mass nor escape of empirical measures at infinity.
In what follows, $G$ denotes a real Lie group,  $\g$ its Lie algebra, $\kr$ its amenable radical, $\ks=\g/\kr$ the largest  quotient of $\g$ which is semisimple without compact factors, and we denote by $\Ad_{\ks}:G\to\Aut\ks$ the adjoint action of $G$ on $\ks$.
Given a probability measure $\mu$ on $G$, we let  $H_{\ks}\subset\Aut\ks$ be the real algebraic group generated by the support of $(\text{Ad}_{\mathfrak{s}})_{\ast}\mu$ and call $H^{nc}_{\ks}$ its non-compact part, i.e. the smallest normal algebraic subgroup of $H_{\ks}$ such that $H_{\ks}/H^{nc}_{\ks}$ is compact.

Our goal is to show the following theorem, which is due to Benoist and Quint \cite[Theorem~1.1]{bq_fv} in the case where $\mu$ has a finite exponential moment on $\ks$.

\begin{named}{Theorem A$^\prime$}\emph{(Recurrence of semisimple random walks)}
\label{thA'} 
Let $G$ be a real Lie group, $\Lambda$  a lattice in G, and $\Omega=G/\Lambda$.
Assume that $\mu$ is a probability measure on $G$ such that
\begin{enumerate}
\item $\mu$ has a finite first moment on $\ks$: $\int_G \log\norm{\Ad_{\ks} g}\ d\mu(g) < +\infty$
;
\item $H^{nc}_{\ks}$ is semisimple.
\end{enumerate}
Then, for every compact set $K\subset\Omega$ and every $\eps>0$, there exists a compact set $K'\subset \Omega$ such that for every $x\in K$,
\begin{enumerate}[label=(\roman*)]
\item for every $n\geq 0$, $(\mu^{*n}*\delta_x)( K') >1- \eps$;
\item for $\mu^{\otimes\N^*}$-almost every  instructions $(g_i)_{i\geq 1}$,
\[
\liminf_{n\to+\infty}\frac{1}{n}\card\{k\in\{1,\dots,n\}\ |\ g_k\dots g_1x \in K'\} > 1-\eps.
\]
\end{enumerate}

\end{named}

\begin{remark}
In particular,  every Radon $\mu$-stationary measure on $\Omega$ must have finite mass. 
\end{remark}

By Proposition \ref{cd-rec}, the proof of \ref{thA'} reduces to showing that the $\mu$-walk on $\Omega$ has a controlled drift away from infinity.
We first check that this is  the case if $\Omega=\SL_d(\R)/\SL_d(\Z)$ and then deduce the result for general quotients $\Omega=G/\Lambda$.  

\subsection{The Benoist-Quint drift function on $\SL_d(\R)/\SL_d(\Z)$} \label{Sec4.1}

We prove \ref{thA'}  in the particular case where $G=\SL_{d}(\R)$, $\Lambda=\SL_{d}(\Z)$ for some $d\geq 2$.
The drift functions we use were defined (in an exponential form) by Benoist and Quint in \cite{bq_fv}. They are inspired by former works of Eskin, Margulis, Mozes \cite{EskMar04,EskMarMoz98}. We begin by recalling their construction.

\bigskip

 Fix a probability measure $\mu$ on $\SL_d(\R)$, and let $H$ be the algebraic group generated by the support of $\mu$.
 We assume that the non-compact part $H^{nc}$ of $H$ is semisimple, or equivalently that $H$ is reductive with compact center. Consider the representation of $H$ on  the exterior algebra $V=\wedge^*\R^d$ and decompose it into its isotypical components for the induced action of $H^{nc}$:
 \[
V = \oplus_{j\in J} V^{(j)}.
\]
Alternatively, denoting by $H^{\circ}$ the identity component of $H$ in the Zariski topology, each $V^{(j)}$ is obtained by summing together  all the  irreducible subrepresentations of $H^\circ$  on $V$ with a given highest weight. The action of $H$ on $V$ then induces an action of the group of connected components $F=H/H^{\circ}$ on the set $\{V^{(j)}, j\in J\}$ which stabilizes the subspace $V^{H^{nc}}$ of $H^{nc}$-invariant vectors.  By the law of large numbers \cite[Theorem 10.9, Corollary 10.12]{BQRW}, we can associate to each representation $V^{(j)}$ a Lyapunov exponent $\lambda_1^{(j)}\in \R$, that quantifies the exponential growth rate of the walk on $V^{(j)}$ and only depends on the $F$-orbit of $V^{(j)}$. Moreover, by semisimplicity of $H^{nc}$, we have $\lambda^{(j)}_{1}\geq 0$, with  $\lambda_1^{(j)}=0$ if and and only if $V^{(j)}=V^{H^{nc}}$ \cite[Theorem 10.9 (f)]{BQRW}.

Fix $H^c\subseteq H$ a maximal compact subgroup of $H$ and some $H^c$-invariant Euclidean norm on $\R^d$.
This induces a natural $H^c$-invariant Euclidean norm on the exterior algebra $V$. 
Given $v\in V$, write $v=\sum_{j\in J}v^{(j)}$ according to the above decomposition of $V$, and define 
\[
\abs{v} = \max_{j\in J} \norm{v^{(j)}}^{\frac{1}{\lambda_1^{(j)}}}
\]
with the convention that $\frac{1}{0}=+\infty$ and
\[
\norm{v^{(j)}}^{+\infty} = \left\{ \begin{array}{ll}
0 & \mbox{if}\ \norm{v^{(j)}} < 1\\
+\infty & \mbox{if}\ \norm{v^{(j)}} \geq 1.
\end{array}
\right.
\]

\bigskip
The following lemma expresses that the $\mu$-walk on $V$ uniformly expands the quasi-norm of vectors  $v$ whose $H^{nc}$-invariant component is not too weighty.

\begin{lemma}
\label{furst}
Given $k\geq 1$, $\lambda\in (0,1)$, $\alpha>0$, there exists $n_{0}\geq 0$ such that for every $n\geq n_0$, for every finite family $v_1,\dots,v_k$ of non-zero vectors in $\wedge^*\R^d$ satisfying $0<\abs{v_l}<+\infty$ for each $l$, one has 
\[
\mu^{*n}\left(g\in H\ \left|\ \min_{1\leq l\leq k}\log\frac{\abs{g v_l}}{\abs{v_l}}\geq n\lambda\right.\right)\geq 1-\alpha.
\]
\end{lemma}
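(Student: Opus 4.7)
The plan is to reduce the statement to a uniform law of large numbers for the $\mu$-walk on each isotypic component with positive Lyapunov exponent.

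First, I observe that the hypothesis $0<|v|<+\infty$ forces the maximum defining $|v|$ to be attained at some index $j_0=j_0(v)\in J$ with $\lambda_1^{(j_0)}>0$: otherwise, by the conventions on $\|\cdot\|^{+\infty}$, every component with $\lambda_1^{(j)}=0$ contributes $0$ or $+\infty$, forcing $|v|\in\{0,+\infty\}$. Since $H$ permutes the isotypic components $V^{(j)}$ within each $F$-orbit and $\lambda_1^{(j)}$ is constant on orbits, one has $gV^{(j_0)}=V^{(j')}$ for some $j'$ with $\lambda_1^{(j')}=\lambda_1^{(j_0)}$, whence the pointwise bound
\[
|gv| \;\geq\; \|gv^{(j_0)}\|^{1/\lambda_1^{(j_0)}}\qquad(g\in H).
\]
It thus suffices to control $\|gv^{(j_0)}\|$ from below uniformly in $v$ and $j_0$.

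Next, I invoke the law of large numbers of Benoist-Quint applied to the action of $H^{nc}$ on each isotypic component $V^{(j_0)}$ (see \cite[Theorem 10.9]{BQRW}). Under the semisimplicity of $H^{nc}$ and the finite first moment of $\mu$, it yields $\tfrac{1}{n}\log\|g_n\cdots g_1 w\|\to \lambda_1^{(j_0)}$ in $\mu^{*n}$-probability, \emph{uniformly} over $w$ in the unit sphere of $V^{(j_0)}$; the uniformity rests on the regularity of semisimple random walks combined with the fact that $V^{(j_0)}$ is a multiple of a single irreducible $H^{nc}$-module, on which the top exponent is attained by every non-zero vector. As $J$ is finite, for every $\eta>0$ one gets $n_0=n_0(\eta,\alpha/k)$ such that for $n\geq n_0$ and every non-zero $w\in V^{(j_0)}$,
\[
\mu^{*n}\bigl(g\in H : \log\|gw\|\geq \log\|w\|+(1-\eta)n\,\lambda_1^{(j_0)}\bigr)\geq 1-\alpha/k.
\]

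Choosing $\eta<1-\lambda$ and combining with the pointwise bound of the first step yields, for each $l$,
\[
\log\frac{|gv_l|}{|v_l|}\;\geq\;\frac{1}{\lambda_1^{(j_0)}}\log\frac{\|gv_l^{(j_0)}\|}{\|v_l^{(j_0)}\|}\;\geq\;(1-\eta)n\;\geq\;n\lambda
\]
with $\mu^{*n}$-probability at least $1-\alpha/k$; a union bound over $l=1,\dots,k$ finishes the argument. The main difficulty is the uniform speed of convergence in the LLN over the sphere of $V^{(j_0)}$: one must exclude any degeneration of the rate for non-generic starting directions. This hinges on the fact that each $V^{(j_0)}$, as an $H^{nc}$-representation, is a multiple of a single irreducible module of a semisimple group without compact factors, so the top Lyapunov exponent is realised by every non-zero vector at a speed that is uniform on the sphere.
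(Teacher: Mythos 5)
Your proposal is correct and follows essentially the same route as the paper: isolate the maximizing index $j_0$ (which necessarily has $\lambda_1^{(j_0)}>0$ because $0<\abs{v}<+\infty$), use the $H$-permutation of isotypic components to get the pointwise lower bound $\abs{gv}\ge\|gv^{(j_0)}\|^{1/\lambda_1^{(j_0)}}$, and conclude by the uniform law of large numbers on each $V^{(j)}$ with positive exponent together with a union bound in $l$. The paper packages the first two steps as the left half of a two-sided min/max inequality (its display \eqref{ineq-min-||}) and cites the uniform convergence directly from \cite[Theorem~4.28(b)]{BQRW} rather than arguing informally from the isotypic structure, but the substance is identical.
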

\begin{proof}
Up to replacing $\alpha$ by $\alpha/k$, we may assume that $k=1$.  For $v\in V$ such that $0<\abs{v}<+\infty$, write
\(
v = \sum_{j\in J} v^{(j)},
\)
and let
\[
J_v=\{j\in J\ |\ \abs{v^{(j)}}>0\}.
\]

Then, for every $g\in H$, one can bound
\begin{align}\label{ineq-min-||}
\min_{j\in J_v} \log \frac{\abs{gv^{(j)}}}{\abs{v^{(j)}}}
\leq \log \frac{\abs{gv}}{\abs{v}}
\leq \max_{j\in J_v} \log \frac{\abs{gv^{(j)}}}{\abs{v^{(j)}}}. 
\end{align}
For the left inequality, just consider $j$ such that $\abs{v}=\abs{v^{(j)}}$, and for the right inequality, choose $j$ for which $\abs{gv}=\abs{gv^{(j)}}$.

By the law of large numbers for product of random matrices \cite[Theorem~4.28 (b)]{BQRW}, on each  representation $V^{(j)}$ such that $\lambda_1^{(j)}>0$, the sequence of functions
\[
(g_i)_{i\geq 1}\mapsto \frac{1}{n}\log\frac{\abs{g_n\dots g_1 v^{(j)}}}{\abs{v^{(j)}}}
\]
converges to $1$ in $L^1(G^{\N^*}, \mu^{\otimes \N^*})$, uniformly over all non-zero $v^{(j)}\in V^{(j)}$.
Combining this fact  and the left inequality of \eqref{ineq-min-||} yields  the lemma.
\end{proof}

To define the Benoist-Quint drift function on the set $\Omega =\SL_{d}(\R)/\SL_{d}(\Z)$ of unimodular lattices of $\R^d$, we need to enhance some convexity property of the above quasi-norm on $V$.
For that, we make the following definition.

\begin{definition}[Corrected quasi-norm]
Given a large parameter $A> 0$ and $i\in\{1, \dots, d-1\}$, define a corrected version $\abs{\cdot}_{A}$ of the quasi-norm $\abs{\cdot}$ by
\[
\forall v\in\wedge^i\R^d,\quad \abs{v}_A =\Abs{e^{Ai(d-i)}v}
\]
\end{definition}

We can also view $\|.\|$, $\abs{\cdot}$ and $\abs{\cdot}_{A}$ as  functions on discrete subgroups of $\R^d$, by identifying $\Delta=\Z a_1\oplus\dots\oplus\Z a_i$ with $\pm a_1\wedge\dots\wedge a_i \in \wedge^{i}\R^d$. This allows to define a function $\phi_A$ with values in $[-\infty, +\infty]$ by setting:
\[
\phi_A(\Delta) = \log\frac{1}{\abs{\Delta}_A}\ \ \mbox{if}\ \text{rank\,} \Delta\notin \{0,d\}
\quad\mbox{and}\quad
\phi_A(\Delta)=0 \\\ \mbox{otherwise.}
\]

\begin{remark}
A discrete subgroup $\Delta\subseteq \R^d$ of rank $i\in \{1, \dots, d-1\}$ satisfies  $\phi_A(\Delta)=+\infty$ if and only if $\Delta\in V^{H^{nc}}$ and $\norm{\Delta}<e^{-Ai(d-i)}$. Since $H$ acts by isometries on  $V^{H^{nc}}$, it follows that the set $\{\phi_{A}=+\infty\}$ is $H$-invariant.
\end{remark}

\begin{definition}[Benoist-Quint function]

For $A> 0$, define a function $f_A$ on $\Omega$ with values in $[0,+\infty]$ by
\[
f_A(x) = \max_{\Delta\leq x} \phi_A(\Delta).
\]
\end{definition}

A few comments are in order. 
First of all, the notation $\Delta\leq x$ means that $\Delta$ varies in the set of  subgroups of the lattice $x$.
As $\Delta$ can be chosen of rank $0$ or $d$, our conventions for $\phi_{A}$ impose that $f_{A}\geq0$. 

Note also that the maximum defining $f_A$ is well defined.
Indeed, one can observe that if $\Delta\leq x$ satisfies $\phi_{A}(\Delta)>0$ then $\text{rank}(\Delta)\notin \{0, d\}$ and $\|\Delta^{(j)}\|< 1$ for each $j\in J$.
Since $\wedge^{*}x$ is a discrete subset of $V=\wedge^*\R^d$, the set of subgroups $\Delta\leq x$ such that $\phi_A(\Delta)>0$ is finite. 

Finally, observe that the set $\Omega_{A}:=\{f_{A}<+\infty\}$ is $H$-invariant, because this is the case for $\{\phi_{A}=+\infty\}$. Moreover, for every compact subset $K\subset\Omega$, the norm $\|\Delta\|$ of non-trivial subgroups of lattices $x\in K$ is uniformly bounded away from $0$ as $x$ varies in $K$.  This implies that $K\subset\Omega_A$ for large enough $A$.

\begin{proposition}[Drift for the Benoist-Quint function on $\SL_d$]
\label{drift_sld}
Let $\mu$ be a probability measure on $G=\SL_d(\R)$ and $H$ the algebraic subgroup generated by $\mu$.
Assume that $\mu$ has a finite moment of order $1$, and that $H^{nc}$ is semisimple.
\begin{enumerate}[label=(\roman*)]
\item \label{dr}  $\forall  \lambda\in (0,1),\ \forall \alpha>0,\,\exists N\geq 0,\, \forall n\geq N,\ \exists A_{0}, A_{1}>0,\, \forall A\geq A_1$,  $\forall x\in f^{-1}_A(A_{0},+\infty)$, $$\mu^{*n}(g\in G\ |\ f_A(gx) \leq f_A(x)-n\lambda) \geq 1-\alpha$$
\item \label{cv} $\exists C>0$, $\forall A>0$, $\forall x\in \phi^{-1}_{A}(\R)$, $\forall g\in H$, $$\phi_A(gx)-\phi_A(x) \leq C \log\norm{g}$$
which implies the same inequality with $f_{A}$ instead of $\phi_{A}$. 
\end{enumerate}
\end{proposition}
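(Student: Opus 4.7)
The plan is to dispose of the control-of-variations bound (ii) by direct manipulation of the quasi-norm $\abs{\cdot}$ and then combine it with Lemma~\ref{furst} to prove the probable-decrease bound (i), after reducing the maximum defining $f_A$ to a uniformly bounded list of short primitive sublattices.

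For (ii), the factor $e^{Ai(d-i)}$ cancels in $\phi_A(g\Delta)-\phi_A(\Delta)=\log(\abs{\Delta}/\abs{g\Delta})$, so I work with $\abs{\cdot}$. Pick $j$ realizing $\abs{\Delta}=\norm{\Delta^{(j)}}^{1/\lambda_1^{(j)}}$; since $\abs{\Delta}$ is finite and positive, $\lambda_1^{(j)}>0$. As $F=H/H^\circ$ permutes the isotypical components preserving Lyapunov exponents, $(g\Delta)^{(\sigma_g(j))}=g(\Delta^{(j)})$ with $\lambda_1^{(\sigma_g(j))}=\lambda_1^{(j)}$. On $\SL_d(\R)$ one has $\norm{g^{-1}}\leq \norm{g}^{d-1}$, hence $\norm{\wedge^i g^{-1}}\leq \norm{g}^{i(d-1)}$, and this yields
\[
\abs{g\Delta}\geq\norm{g\Delta^{(j)}}^{1/\lambda_1^{(j)}}\geq\norm{g}^{-d(d-1)/\lambda_{\min}}\abs{\Delta},
\]
where $\lambda_{\min}=\min\{\lambda_1^{(k)}:\lambda_1^{(k)}>0\}$, giving (ii) for $\phi_A$ with $C=d(d-1)/\lambda_{\min}$. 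The bound extends to $f_A$ by choosing $\Delta^*\leq gx$ realizing $f_A(gx)$ (the case $f_A(gx)\leq 0$ is trivial) and applying the $\phi_A$ inequality to the pair $(g^{-1}\Delta^*,g)$.

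For (i), I first note $\phi_A(m\Delta')\leq\phi_A(\Delta')$ for $m\in\N^*$ (from the fact that $t\mapsto t^{1/\lambda_1^{(j)}}$ is nondecreasing on $[1,+\infty)$ for every $j$), so the maximum in $f_A$ is realized on primitive sublattices (modulo the constant $0$ contribution from ranks $0$ and $d$). When $\phi_A(\Delta)>0$, every isotypical component must satisfy $\norm{\Delta^{(j)}}<e^{-Ai(d-i)}$, so $\norm{\Delta}\leq(\card J)^{1/2}e^{-Ai(d-i)}$. Choosing $A_1$ large enough ensures $\norm{\Delta}<1/2$ whenever $A\geq A_1$ and $\phi_A(\Delta)>0$; by the classical geometry-of-numbers estimate exploited in \cite{bq_fv} there are then at most $k=k(d)$ such primitive $\Delta\leq x$. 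Given $\lambda\in(0,1)$ and $\alpha>0$, set $\lambda'=(1+\lambda)/2$ and let $N$ be the integer produced by Lemma~\ref{furst} with parameters $(k,\lambda',\alpha/2)$. For $n\geq N$, Markov's inequality and the finite first moment give $\mu^{*n}(\log\norm{g}>T)\leq nM_\mu/T$ with $M_\mu=\int\log\norm{g}\,d\mu$, so $E_2:=\{\log\norm{g}\leq T\}$ has $\mu^{*n}$-measure $\geq 1-\alpha/2$ for $T=2nM_\mu/\alpha$. Set $A_0=n\lambda+CT+1$ and enlarge $A_1$ as needed.

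Given $x$ with $f_A(x)>A_0$, enumerate the at most $k$ primitive $\Delta_1,\dots,\Delta_m\leq x$ with $\phi_A(\Delta_l)>0$; since each has $\abs{\Delta_l}\in(0,+\infty)$, Lemma~\ref{furst} produces an event $E_1$ of $\mu^{*n}$-measure $\geq 1-\alpha/2$ on which $\log(\abs{g\Delta_l}/\abs{\Delta_l})\geq n\lambda'$ for every $l$. On $E_1\cap E_2$ (of measure $\geq 1-\alpha$), every primitive $\Delta\leq x$ falls into one of three cases:
\begin{itemize}
\item if $\phi_A(\Delta)>0$, then $\phi_A(g\Delta)\leq\phi_A(\Delta)-n\lambda'\leq f_A(x)-n\lambda$;
\item if $\phi_A(\Delta)\leq 0$ and $\abs{\Delta}_A<+\infty$, then (ii) and the definition of $E_2$ give $\phi_A(g\Delta)\leq CT<f_A(x)-n\lambda$;
\item if $\abs{\Delta}_A=+\infty$, then $\phi_A(g\Delta)=-\infty$ by the $H$-invariance of $\{\abs{\cdot}_A=+\infty\}$.
\end{itemize}
Taking the max yields $f_A(gx)\leq f_A(x)-n\lambda$, as required. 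The main obstacle is the uniform bound $k(d)$ on the number of short primitive sublattices of unimodular lattices; this geometry-of-numbers input is what converts the otherwise uncontrolled max defining $f_A$ into a finite list amenable to Lemma~\ref{furst}. A secondary delicate point is checking that $H$ really acts by isometries on $V^{H^{nc}}$ so that the third case above is stable under the walk, which ultimately rests on the decomposition $H=H^{nc}\cdot H^c$ for our reductive $H$ with compact center.
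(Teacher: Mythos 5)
Your bound on the number of short primitive sublattices is false, and this is a genuine gap. You claim (attributing it to a "classical geometry-of-numbers estimate exploited in \cite{bq_fv}") that the number of primitive sublattices $\Delta\leq x$ of a unimodular lattice $x$ with $\norm{\Delta}<1/2$ is bounded by a constant $k(d)$. This fails already for primitive vectors in dimension $d\geq 3$: in the unimodular lattice $x_\eps=\eps\Z^{d-1}\oplus\eps^{-(d-1)}\Z\subset\R^d$, the primitive vectors $\eps\cdot(a_1,\dots,a_{d-1},0)$ with $\gcd(a_i)=1$ and $\sum a_i^2<1/(4\eps^2)$ all have norm $<1/2$, and their number grows like $\eps^{-(d-1)}$. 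No reference to geometry of numbers, and nothing in \cite{bq_fv}, supplies a uniform bound on the count of \emph{all} short primitive sublattices; without one, Lemma~\ref{furst} (whose threshold $N=n_0(k,\lambda,\alpha)$ depends on $k$) cannot be applied uniformly in $x$, and your argument collapses.

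What makes the Benoist--Quint strategy work — and what you have not used — is the weak submodularity property of Lemma~\ref{smfaible}. It does not bound the number of short primitive sublattices; rather, it shows that for $A$ large enough, at most one primitive sublattice \emph{of each rank} $i\in\{1,\dots,d-1\}$ can satisfy $\phi_A(\Delta)\geq f_A(x)-A_0$. This gives a uniform bound $\card\mathscr{D}\leq d$ on the set $\mathscr{D}$ of \emph{near-maximal} sublattices, which is what one feeds to Lemma~\ref{furst}. Sublattices outside $\mathscr{D}$ are then handled not by contraction but by the two-sided control $\abs{\phi_A(g\Delta)-\phi_A(\Delta)}\leq C\log\norm{g}$ together with the choice $C\log\norm{g}\leq A_0/2$ with high probability: they start more than $A_0$ below the max and cannot catch up. Your part (ii) is fine (and essentially matches the paper's remark that it is immediate from the definitions, modulo the routine singular-value manipulation $\norm{g^{-1}}\leq\norm{g}^{d-1}$ on $\SL_d(\R)$), but part (i) needs the submodularity lemma, not a short-vector count.
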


The proof of this proposition is based on a convexity property of $\phi_{A}$, Lemma~\ref{smfaible} below, which is a reformulation of \cite[Lemma~4.2]{bq_fv}.
The first statement should be understood as follows: if two primitive subgroups $\Delta, \Delta'\leq x$ take large values under $\phi_{A}$, then one can obtain an even larger value by considering the intersection $\Delta\cap \Delta'$ or the sum $\Delta+ \Delta'$. 

\begin{lemma}[Weak submodularity property \cite{bq_fv}]
\label{smfaible}
For every $A_0> 0$, there exists $A_1> 0$, such that for all $A\geq A_1$, for every lattice $x\in \Omega$, all primitive  subgroups $\Delta, \Delta'\leq x$ satisfying $\Delta \subsetneq \Delta'$, $\Delta' \subsetneq \Delta$ and $\phi_A(\Delta),\phi_A(\Delta') >0$,
\[
\min\left(\phi_A(\Delta),\phi_A(\Delta')\right) + A_0 < \max\left(\phi_A(\Delta\cap \Delta'),\phi_A(\Delta+\Delta')\right).
\]
In particular, given $x\in\Omega_A$ such that $f_A(x)>A_0$, for each $i\in\{1,\dots,d-1\}$, there exists at most one primitive subgroup $\Delta_i\leq x$ of rank $i$ such that $$ \phi_A(\Delta_i) \geq f_A(x)- A_0$$
\end{lemma}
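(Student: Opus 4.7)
Proof plan. Set $i = \text{rank}(\Delta)$, $i' = \text{rank}(\Delta')$, $j = \text{rank}(\Delta\cap\Delta')$, $k = \text{rank}(\Delta+\Delta')$. The hypotheses give strictly positive integers $a := i-j$ and $b := i'-j$, with $k = j+a+b$, and a direct expansion yields the key arithmetic identity
\[
 i(d-i) + i'(d-i') - j(d-j) - k(d-k) \;=\; 2ab \;\geq\; 2.
\]
Because the correction factor in $|\cdot|_A$ on $\wedge^i\R^d$ is precisely $e^{Ai(d-i)}$, this identity shows that switching from the pair $(\Delta,\Delta')$ to the pair $(\Delta\cap\Delta', \Delta+\Delta')$ enlarges the sum of exponents by at least $2A$, regardless of the Lyapunov data. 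This is the algebraic origin of the $A_0$-gap, and will supply the slack required once $A$ is chosen large enough.

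The analytic ingredient is the classical submodularity of the Euclidean norm on $\wedge^*\R^d$, restricted to primitive sublattices of the unimodular lattice $x$:
\[
 \|\Delta\|\cdot\|\Delta'\| \;\geq\; \|\Delta\cap\Delta'\|\cdot\|\Delta+\Delta'\|,
\]
a consequence of Hadamard's inequality in a basis adapted to the flag $\Delta\cap\Delta' \subset \Delta,\Delta' \subset \Delta+\Delta'$. Since the fixed Euclidean norm on $V$ is $H^c$-invariant, the isotypical decomposition $V=\bigoplus_j V^{(j)}$ is orthogonal, so the projections $v\mapsto v^{(j)}$ have operator norm at most $1$. Using this, one transfers the above Euclidean submodularity to the quasi-norm $|\cdot|$ (a max of $\|\cdot\|^{1/\lambda_1^{(j)}}$ over isotypical components) up to an additive constant $C_d$ depending only on $d$ and on the Lyapunov spectrum $\{\lambda_1^{(j)}\}$. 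Combining this transfer with the exponent gain $2Aab$ above yields a quantitative submodularity
\[
 \phi_A(\Delta) + \phi_A(\Delta') + 2A a b\,\kappa - C_d \;\leq\; \phi_A(\Delta\cap\Delta') + \phi_A(\Delta+\Delta'),
\]
for some $\kappa>0$ depending only on the Lyapunov spectrum. Bounding the sum on the left below by twice the minimum and the sum on the right above by twice the maximum gives
\[
 \max\bigl(\phi_A(\Delta\cap\Delta'),\phi_A(\Delta+\Delta')\bigr) \;\geq\; \min\bigl(\phi_A(\Delta),\phi_A(\Delta')\bigr) + A\kappa - C_d/2,
\]
and fixing $A_1$ so that $A_1\kappa - C_d/2 > A_0$ settles the main inequality for all $A\geq A_1$. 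The degenerate boundary cases $j=0$ or $k=d$, where $\phi_A$ of the intersection or sum is set to zero by convention, must be handled by a short direct argument exploiting $\phi_A(\Delta),\phi_A(\Delta')>0$ together with the explicit correction factor $e^{Ai(d-i)}$.

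The "in particular" uniqueness assertion follows by contradiction. If $\Delta\neq\Delta'$ were distinct primitive subgroups of $x$ of the same rank $i$, both satisfying $\phi_A\geq f_A(x)-A_0$, then equal rank forbids containment of one in the other, so the first part of the lemma applies and produces a subgroup of $x$, either $\Delta\cap\Delta'$ or $\Delta+\Delta'$, whose $\phi_A$ strictly exceeds $f_A(x)$. This contradicts the definition of $f_A(x)$ as the maximum of $\phi_A$ over the subgroups of $x$.

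I expect the main technical obstacle to be the transfer of the Euclidean Plücker submodularity to the anisotropic quasi-norm $|\cdot|$: the latter is a maximum of component norms raised to heterogeneous powers $1/\lambda_1^{(j)}$, and this max does not commute in an obvious way with the intersection and sum operations on sublattices. Resolving it requires careful bookkeeping of the isotypical components of $\Delta\cap\Delta'$ and $\Delta+\Delta'$ in their respective wedge powers, combined with uniform control on the Lyapunov spectrum to absorb all losses into a single additive constant $C_d$; this is essentially the content of Lemma~4.2 of \cite{bq_fv}, of which the present statement is a logarithmic reformulation.
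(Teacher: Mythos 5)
Your proposal is essentially the same approach as the paper: both reduce the statement to \cite[Lemma~4.2]{bq_fv}, which you cite explicitly at the end of your sketch. The paper's proof is a careful translation of notation ($\phi_A(\Delta)=\log\varphi_{\eps_0}(\Delta)$ with $\eps_0=e^{-A}$, and $\lambda_1^{(j)}=\bracket{\lambda^{(j)},\sigma_\mu}$ via \cite[Lemma~8.18]{BQRW}) followed by a direct invocation of that lemma; your sketch instead tries to reconstruct the internal structure of \cite[Lemma~4.2]{bq_fv} from scratch. The pieces you identify are right: the exponent identity $i(d-i)+i'(d-i')-j(d-j)-k(d-k)=2ab$ is correct and is indeed the combinatorial source of the $A$-dependent gap, and the Hadamard/Pl\"ucker submodularity $\|\Delta\|\,\|\Delta'\|\geq\|\Delta\cap\Delta'\|\,\|\Delta+\Delta'\|$ is the analytic ingredient. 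But the transfer of that Euclidean inequality to the anisotropic quasi-norm $|\cdot|$ (with the heterogeneous exponents $1/\lambda_1^{(j)}$) is precisely the technical content of \cite[Lemma~4.2]{bq_fv}, so in the end your argument rests on the same external reference as the paper's, only expressed as a motivated sketch rather than a notation-matching. One detail the paper treats carefully and you only gesture at: the case $j=0$, $k=d$ (case~(iv) of \cite[Lemma~4.2]{bq_fv}) must be ruled out rather than handled, because there the right-hand side of the submodularity inequality is $0$ by convention while the left-hand side is positive. The paper rules it out by observing that Euclidean submodularity combined with $\|\Delta\|,\|\Delta'\|<1$ would force $\|\Delta+\Delta'\|<1$ for a rank-$d$ sublattice of a unimodular lattice, which is impossible; your sketch would need to make this explicit.
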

\begin{proof}
We check that \cite[Lemma~4.2]{bq_fv} does imply Lemma \ref{smfaible}. We first explain why our notations correspond to those of \cite{bq_fv}.
Fix a Cartan subspace $\mathfrak{a}$ in the Lie algebra of $H$ compatible with our choice of maximal compact subgroup $H^c$, and an open Weyl chamber $\ka^{++}\subseteq \ka$. Each isotypical component $V^{(j)}$ corresponds to a highest weight $\lambda^{(j)} \in\ka^*$, and by \cite[Lemma~8.18]{BQRW} one has
\begin{equation}\label{exli}
\lambda_1^{(j)} = \bracket{\lambda^{(j)},\sigma_\mu}
\end{equation}
where $\sigma_\mu\in\ka^{++}$ is the Lyapunov vector of $\mu$, given by the law of large numbers \cite[Theorem~10.9]{BQRW}.

Now  set  $\delta_{\lambda}= \bracket{\lambda,\sigma_\mu}$ for  $\lambda \in \{\lambda^{(j)}, j\in J\}$  and $\delta_{i}=i(d-i)$, $i\in \{1,\dots, d-1\}$.
Then, for every $A>0$ and every discrete subgroup $\Delta$ in $\R^d$ of rank $1\leq i<d$, the function $\phi_A$ can be expressed from the function $\varphi_{\eps_0}$ used in \cite{bq_fv} by
\[
\phi_{A}(\Delta)=\log \varphi_{\eps_{0}}(\Delta), \quad\mbox{where}\ \eps_{0}=e^{-A}. 
\]

Let us now check that \cite[Lemma~4.2]{bq_fv} does imply the weak submodularity property. 
Fix $A_{2}>0$ such that for $A\geq A_{2}$, any $\Delta$, the inequality $\phi_{A}(\Delta)>0$ implies $\|\Delta\|<1$. 
Let $A\geq A_{2}$, and consider $x\in \Omega$, as well as two primitive subgroups $\Delta \neq \Delta'\leq x$ which are not included in one another and such that $\phi_A(\Delta),\phi_A(\Delta')> 0$. Let  $(u_{1}, \dots, u_{r})$ be a basis for $\Delta\cap \Delta'$ and complete it into basis $(u_{1}, \dots, u_{r}, v_{1}, \dots, v_{s})$, $(u_{1}, \dots, u_{r}, w_{1}, \dots, w_{t})$ of $\Delta$ and $\Delta'$. The assumption  $\Delta \subsetneq \Delta'$, $\Delta' \subsetneq \Delta$ guarantees that $s,t\geq 1$. Note also that $(u_{1}, \dots, u_{r}, v_{1}, \dots, v_{s}, w_{1}, \dots, w_{t})$ is a basis for $\Delta+\Delta'$. 

We  now apply  \cite[Lemma~4.2]{bq_fv}  to $u=(u_{i})$, $v=(v_{j})$, $w=(w_{k})$. 
Note that case $iv)$ cannot occur as the conditions $A\geq A_{2}$ and $\phi_{A}(\Delta), \phi_{A}(\Delta')>0$ impose that if $r=0$ and $s+t=d$, then $\|\Delta+\Delta'\|=\|v\wedge w\|<1$ which is absurd for a sublattice of a unimodular lattice. 
Hence we are left with cases $i)$, $ii)$, $iii)$. As long as $A>C_{1}$ where $C_{1}>1$ is some constant depending only on $H$,
they all express the inequality 
$$\min\left(\phi_A(\Delta),\phi_A(\Delta')\right) + R(A) < \max\left(\phi_A(\Delta\cap \Delta'),\phi_A(\Delta+\Delta')\right)$$
with $R(A)=\frac{1}{2}(\max_{\lambda\in P^+} \delta_{\lambda})^{-1} (A-\log C_{1} )$. To conclude, for any $A_{0}>0$ choose $A_{1}>0$ such that $A> A_{2}, C_{1}$ and $R(A_{1})>A_{0}$ to get the announced result. 

For the second statement, assume for a contradiction that $\Delta'_i\leq\Delta$ is another primitive subgroup of rank $i$ satisfying $ \phi_A(\Delta'_i)\geq f_A(x)-A_0$.
The above inequality applied to $\Delta_i$ and $\Delta'_i$ implies
\[
f_{A}(x)\leq  \min\left(\phi_A(\Delta_{i}),\phi_A(\Delta'_{i})\right) + A_0 < \max\left(\phi_A(\Delta_{i}\cap \Delta'_{i}),\phi_A(\Delta_{i}+\Delta'_{i})\right)\leq f_{A}(x)\]
which is absurd. 
\end{proof}

We are now ready to prove Proposition~\ref{drift_sld}. 

\begin{proof}[Proof of Proposition~\ref{drift_sld}]
Item~\ref{cv} is an immediate consequence of our definitions, so we only prove \ref{dr}.

Let $\lambda\in (0,1)$ and $\alpha>0$. 
By Lemma~\ref{furst}, there exists $N\geq 0$ such that for all $n\geq N$, for all $v_1,\dots,v_d$ in $V$ with $0<\abs{v_i}<+\infty$, 
\begin{equation}\label{boundp}
\mu^{*n}\left(g\in H \ \left|\  \min_{1\leq i\leq k}\log\frac{\abs{g v_i}}{\abs{v_i}} \geq n\lambda\right.\right) \geq 1-\frac{\alpha}{2}.
\end{equation}
Fix such $n\geq N$, let $C>0$ be such that \ref{cv} holds, and choose a constant $A_{0}>0$ large enough so that  
\[
\mu^{*n}\left(g\in H \ \left|\ C\log \|g\|\leq \frac{1}{2}A_{0}\right.\right) \geq 1-\frac{\alpha}{2}.
\]
Let $A_1>0$ be the parameter given by Lemma~\ref{smfaible}, and $A\geq A_{1}$.
Then let $x\in \Omega_{A}$ such that $f_{A}(x)>A_{0}$ and $\mathscr{D}:=\{\Delta\leq x\ |\ \Delta \text{ primitive }, \,\phi_{A}(\Delta)\geq f_{A}(x)-A_{0}\}$. The second statement of Lemma~\ref{smfaible} guarantees that $\text{card}\mathscr{D}\leq d$. In particular, by our choice for $n$, and observing that $\frac{|g\Delta|}{|\Delta|}=\frac{|g\Delta|_{A}}{|\Delta|_{A}}$, 
\[
\mu^{*n}\left(g\in H \ |\ \forall \Delta\in \mathscr{D}, \,\phi_{A}(g\Delta) \leq \phi_{A}(\Delta) -n\lambda\right) \geq 1-\frac{\alpha}{2}.
\]
Moreover, as $|\phi_{A}(gx)-\phi_{A}(x)|\leq C\log \|g\|$, we have 
$$\mu^{*n}(g\in H \,|\, f_{A}(gx)=\max_{\Delta\in \mathscr{D}} \phi_{A}(g\Delta)) \geq 1-\frac{\alpha}{2}$$
which concludes the proof. 

\end{proof}

\subsection{General quotients $G/\Lambda$}

This paragraph concludes  Section \ref{Sec4}  by the proof of  \ref{thA'} for general quotients $G/\Lambda$. We  follow the argument of \cite[Section~6]{bq_fv}.  

\begin{proposition}
\label{driftcusp}
Let $(G,\Lambda, \Omega, \mu)$ be as in  \ref{thA'}. Then the $\mu$-walk on $\Omega$ has a controlled drift away from  infinity. 
\end{proposition}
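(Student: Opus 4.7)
The plan is to reduce this general statement to Proposition~\ref{drift_sld} by pushing everything forward under the adjoint representation on $\ks$, in the spirit of \cite[Section~6]{bq_fv}. The natural candidate for the control representation of Definition~\ref{cd} is $\Phi = \Ad_\ks : G \to \SL^\pm(\ks)$; it has finite first moment by hypothesis, and by definition the image $H_\ks = \overline{\Ad_\ks \Gamma}^Z$ satisfies the assumptions of Proposition~\ref{drift_sld} (reductive with semisimple non-compact part). So we are in a position to borrow the Benoist--Quint function $f_A$ constructed on $\SL(\ks)/\SL(\ks)_\Z$-type quotients and transport it to $\Omega$.

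First I would set up the transport. The subgroup $\Ad_\ks(\Lambda)$ is known to be commensurable with an arithmetic subgroup of the algebraic group generated by $\Ad_\ks(G)$; combining this with the fact that $\Lambda$ is a lattice in $G$, one obtains a proper map (up to compact fibres) from $\Omega = G/\Lambda$ to a homogeneous space of the form $\SL^\pm(\ks)/\Gamma_0$ where $\Gamma_0$ is commensurable with $\SL^\pm(\ks) \cap \GL(\ks_\Z)$ for a suitable $\Z$-structure on $\ks$ coming from the Lie bracket. Concretely, one associates to each $x \in \Omega$ (with a choice of lift $g_x \in G$) the unimodular lattice $\Ad_\ks(g_x)\cdot L_0 \subset \ks$, where $L_0$ is a $\Z$-form preserved by $\Ad_\ks(\Lambda)$ up to finite index; this construction is independent of the choice of lift modulo the compact ambiguity.

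Second, I would pull back: set $f := f_A \circ \pi$, where $\pi : \Omega \to \SL^\pm(\ks)/\Gamma_0$ is the map above and $f_A$ is the function from Section~\ref{Sec4.1} applied to the measure $(\Ad_\ks)_*\mu$. The control of variations condition \ref{cvar} is immediate from Proposition~\ref{drift_sld}\ref{cv}, with the same constant $C$ and control function $\Phi = \Ad_\ks$. The probable decrease condition \ref{pdec} is likewise a direct consequence of Proposition~\ref{drift_sld}\ref{dr} applied to $(\Ad_\ks)_*\mu$: for $\lambda,\alpha$ given, one picks $n$, then $A_0$ and $A \geq A_1$ as in that proposition, and the inequality lifts to $\Omega$ since $f_A(\pi(gx)) = f_A(\Ad_\ks(g)\cdot \pi(x))$. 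Finally, I would check that compact subsets of $\Omega$ sit inside $f^{-1}[0,+\infty)$ for $A$ large enough and that escape to infinity in $\Omega$ forces $f \to +\infty$: this is essentially the Mahler--Dani--Margulis criterion, which in this setting reduces escape at infinity in $\Omega$ to the appearance of short vectors in the exterior algebra of $\ks$ under the $\Ad_\ks$-action, precisely what $f_A$ detects.

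The principal obstacle is the reduction step: verifying that the map $\pi : \Omega \to \SL^\pm(\ks)/\Gamma_0$ is proper modulo a compact kernel, i.e.\ that a sequence leaving every compact subset of $\Omega$ does leave every compact subset downstairs. This requires knowing that the fibres of $\Ad_\ks$ restricted to $\Lambda$ are well-behaved, which typically follows from arithmeticity-type arguments applied to the amenable radical (a compact extension of a solvable normal subgroup whose action on lattices of $\ks$ is controlled). A subsidiary subtlety is that $f_A$ takes the value $+\infty$ precisely on the directions corresponding to $V^{H^{nc}}$; one must verify that this locus is genuinely $\Gamma$-invariant and that the walk cannot enter it from outside, which is exactly the remark following the definition of $\phi_A$ in Section~\ref{Sec4.1} and poses no real difficulty here.
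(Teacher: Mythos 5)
Your proposal has a genuine gap at the arithmeticity step. You assert that ``$\Ad_\ks(\Lambda)$ is known to be commensurable with an arithmetic subgroup of the algebraic group generated by $\Ad_\ks(G)$,'' and you use this to build a proper map $\pi : \Omega \to \SL^\pm(\ks)/\Gamma_0$ with $\Gamma_0$ commensurable with $\SL^\pm(\ks) \cap \GL(\ks_\Z)$. This fails in general: Margulis arithmeticity only applies to irreducible lattices in semisimple groups of real rank at least two, and there do exist non-arithmetic lattices in real rank one (e.g.\ in $\mathrm{SO}(n,1)$ and in $\mathrm{SU}(n,1)$ for small $n$). For such a lattice $\Lambda_S = \Ad_\ks \Lambda$, there is no $\Z$-structure on $\ks$ for which $\Lambda_S$ is commensurable with $\GL(\ks_\Z)$, and consequently there is no proper $\Gamma$-equivariant map from $\Omega$ into a space of the form $\SL_N(\R)/\SL_N(\Z)$ onto which the Benoist--Quint function $f_A$ could be pulled back. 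The hypotheses of the proposition do not exclude this situation, so your argument cannot close.

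The paper's actual proof is organized precisely to circumvent this. After reducing to $\Aut\ks/\Lambda_S$ via the proper map $p_S$ (your idea, which is correct, using \cite[Lemma~6.4]{bq_sadic}), the lattice is decomposed into irreducible factors. On each irreducible factor one applies Margulis arithmeticity \emph{where it is valid}, i.e.\ in higher rank, and there the embedding into $\SL_N(\R)/\SL_N(\Z)$ and Proposition~\ref{drift_sld} work as you describe. But in a rank-one factor the paper invokes a distinct construction due to Garland and Raghunathan: a representation $V$ of $G$ and a finite family of vectors $v_1,\dots,v_r$ with discrete $\Lambda$-orbits, giving a proper function $f(g\Lambda)=\max_{i,\gamma}\log\norm{g\gamma v_i}^{-1}$ that has no $H^{nc}$-invariant components and satisfies the probable decrease and control-of-variation conditions. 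The separate irreducible pieces are then reassembled with Lemma~\ref{drift-union}\,a). This case split is not an optional simplification: it is the mechanism by which non-arithmetic rank-one lattices are handled, and your proposal omits it entirely. The rest of your strategy (using $\Ad_\ks$ as control representation, pulling back $f_A$ through a proper map, and observing that $f_A=+\infty$ only on the $H^{nc}$-invariant locus) is sound and matches the paper, but the reduction to a single $\SL^\pm(\ks)/\Gamma_0$ does not hold without the arithmeticity you take for granted.
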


\begin{proof}[Proof sketch]
\noindent\underline{First case:} $G$ is a semisimple $\Q$-group  and $\Lambda=G(\Z)$.\\ 
If $G\hookrightarrow\SL_N$ is an embedding defined over $\Q$, it induces a proper map
\[
G/\Lambda\hookrightarrow\SL_N(\R)/\SL_N(\Z),
\]
Hence the controlled drift away from infinity in $\SL_N(\R)/\SL_N(\Z)$ given by Proposition~\ref{drift_sld} induces by restriction the result on $G/\Lambda$.

\smallskip
\noindent\underline{Second case:} $G=\Aut\g$ with $\g$ semisimple of real rank $1$.\\
Here, the construction does not use the Benoist-Quint function. We recall the argument given in \cite[Section~6]{bq_fv}. By the results of Garland and Raghunathan \cite{gr} on reduction theory for lattices in semisimple Lie groups of real rank $1$ (see also \cite[Proof of Lemma~6.3, 2nd case]{bq_fv}), there exists a representation $V$ of $G$ and a finite family of vectors $v_1,\dots,v_r$ in $V$ such that
\begin{enumerate}
\item [(i)] each $\Lambda$-orbit $\Lambda v_i$ is discrete in $V$;
\item [(ii)] for each $i=1,\dots,r$, the orbit $G\cdot v_i$ is compact in $\PP(V)$ and contains no $H^{nc}$-invariant element;
\item [(iii)] a sequence $(g_n\Lambda)_{n\geq 0}$ goes to infinity in $G/\Lambda$ if and only if
\[
\lim_{n\to\infty}\left(\min_{1\leq i\leq r}\min_{\gamma\in\Lambda}\norm{g_n\gamma v_i}\right) =0;
\]
\item [(iv)] for every $A_0\geq 0$, there exists $\delta>0$ such that for every $g\in G$, if
\[
\min_{1\leq i\leq r}\min_{\gamma\in\Lambda}\norm{g\gamma v_i}=\norm{g\gamma_0v_{i_0}}\leq\delta,
\]
then for every $\gamma v_i\neq \gamma_0 v_{i_0}$, $\norm{g\gamma_0v_{i_0}}\leq e^{-A_0}\norm{g\gamma v_i}$.
\end{enumerate}
The function
\[
f(g\Lambda) = \max_{1\leq i\leq r}\max_{\gamma\in\Lambda}\log \frac{1}{\norm{g\gamma v_i}}
\]
is therefore proper on $G/\Lambda$.  One can easily adapt the proof of Proposition~\ref{drift_sld} to show that it has the properties of ``probable decrease'' and ``control of variation'' required for the controlled drift \ref{cd} away from infinity. 

\smallskip
\noindent\underline{Third case:} $G=\Aut\g$ where $\g$ is semisimple without compact ideal.\\
Replacing $\Lambda$ by a finite-index subgroup if necessary, we may decompose $\Omega=G/\Lambda$ into a finite product
\[
\Omega = \Omega_1\times\dots\times\Omega_r,
\]
where $\Omega_i=G_i/\Lambda_i$ is the quotient of a semisimple group $G_i$ without compact factors by an \emph{irreducible} lattice $\Lambda_i$.
By the Margulis Arithmeticity Theorem \cite[Theorem~1.16, page 299]{margulis_discretesubgroups}, either $G_i$ has real rank one, or $\Lambda_i$ is an arithmetic subgroup.
In both cases, we have already shown that the $\mu$-walk on $\Omega_{i}$ has a controlled drift away from infinity. This is equivalent to saying that the random walk  on $\Omega$ has a controlled drift away from $\cS_{i}=(\infty, S_{i})$ where $S_{i}$ is the product of the $(\Omega_{j})_{j\neq i}$ and $\infty_{i}$, the point at infinity in $\Omega_{i}$. Proposition \ref{drift-union} then implies that the $\mu$-walk on $\Omega$ drifts away from  $ \cS_{1}\vee\dots \vee  \cS_{r}$, i.e. from $(\infty, \infty)$.

\smallskip
\noindent\underline{Fourth case:} General case.\\
Let $\Ad_{\ks}:G\to\Aut\ks$ be the adjoint action of $G$ on $\ks$.
By \cite[Lemma~6.4]{bq_sadic}, the image $\Lambda_S=\Ad_{\ks}\Lambda$ is a lattice in $S$, and the induced map $p_S:\Omega\to \Aut\ks/\Lambda_S$ is proper. 
By what precedes, the walk on $\Aut\ks/\Lambda_S$  drifts away from infinity, and denoting by $f^{(S)}$ the drift function involved, we obtain a drift function for $G/\Lambda$ by setting 
\[
f = f^{(S)}\circ p_S.
\]
\end{proof}

\section{Non-accumulation on invariant subspaces}
\label{Sec5}

The goal of this section is to prove \ref{thB'}, stating that a random walk on $G/\Lambda$ does not accumulate on a proper invariant subset unless it is stuck inside it. The statement is slightly more technical than its naive counterpart Theorem \ref{thB} of the introduction. It is indeed the version we need to show that the equidistribution results of Benoist and Quint \cite{bq3} hold under a finite first moment assumption.
The proof of \ref{thB'} is based on the construction of an appropriate drift function as in Definition \ref{cd}.

\begin{named}{Theorem B$^\prime$}\emph{(Unstability of  invariant homogeneous subspaces)}
\label{thB'}
Let $G$ be a real Lie group, $\Lambda$ a lattice in $G$, and $\Omega=G/\Lambda$.  Let $\mu$ be a probability measure on $G$ and denote by $H\subseteq \emph{Aut}(\g)$  the real algebraic group generated by the support of $\Ad_{*}\mu$. We assume that $\mu$ has a finite first moment and that $H^{nc}$ is semisimple. Denote by $L$ the centralizer of $H^{nc}$ in $G$.  
Let $Y\in\cS_\Omega(\Gamma)$,  and consider compact subsets $L_{0}\subseteq L$, $K\subseteq \Omega\smallsetminus L_{0}Y$.

Then, for every $\eps>0$, there exists a neighborhood $O'$ of $L_{0}Y$ in $\Omega$  such that for all $x\in K$, 
\begin{enumerate}[label=(\roman*)]
\item for every $n\geq 0$, $(\mu^{*n}*\delta_x)(O')<\eps$;
\item for $\mu^{\otimes\N^*}$-almost every instructions $(g_i)_{i\geq 1}$,
\[
\limsup_{n\to+\infty}\frac{1}{n}\card\{k\in\{1,\dots,n\}\ |\ g_k\dots g_1x\in O'\}<\eps.
\]
\end{enumerate}
\end{named}
\bigskip

\begin{remark}

1) More precisely, the notation $L$ refers to the subgroup of $G$ defined by the Lie algebra $\kl=\{v\in \g \,|\, \forall a\in H^{nc},\,  a(v)=v\}$. Note that this subgroup may not be closed in $G$.

2) In the context of  \ref{thB'},  for any $Y\in S_{\Omega}(\Gamma)$, $x\in \Omega \smallsetminus LY$, we have 
\begin{enumerate}
\item Every weak limit $\nu$ of $(\mu^{*n}*\delta_{x})_{n\geq 0}$ satisfies $\nu(LY)=0$.
\item For $\mu^{\otimes\N^*}$-almost every trajectory $(g_n)_{n\geq 1}$,  every weak limit $\nu$ of the sequence of empirical measures $(\frac{1}{n}\sum_{k=0}^{n-1}\delta_{g_k\dots g_1x})_{n\geq 1}$ satisfies
\[
\nu(LY) = 0.
\]
\end{enumerate}
\end{remark}

To prove  \ref{thB'}, we show that the $\mu$-walk on $\Omega$ drifts away from $L$-neighborhoods of $Y$.  

\begin{proposition}
\label{drifty}
Let $(G,\Lambda, \Omega, \mu)$ be as in  \ref{thB'}. Let $\cS_{L}$ be the collection of pairs $(Y,UY)$ where $Y\in S_{\Omega}(\Gamma)$ and $U\subseteq L$ is a neighborhood of $1\in L$. 

Then the $\mu$-walk on $\Omega$ has a controlled drift away from $\cS_{L}$. 
\end{proposition}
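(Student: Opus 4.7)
By Lemma~\ref{drift-union} b), together with the controlled drift away from infinity supplied by Proposition~\ref{driftcusp}, it suffices to show that the $\mu$-walk has a controlled drift away from $\cS_L$ \emph{on compact subsets} of $\Omega$. Fixing $Y\in\cSS$, a compact $Q\subseteq\Omega$, a compact $K\subseteq\Omega\smallsetminus Y$ and $\alpha>0$, the task reduces to producing a drift function $f_Y$ bounded on $K$, whose upper level sets are neighborhoods of $UY$ for some neighborhood $U$ of $1$ in $L$, and which satisfies probable decrease on $Q$ together with control of variations.

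The algebraic heart of the argument is that $\kt:=\mathrm{Lie}(G_Y)$ is $H$-invariant (because $\Gamma\subseteq G_Y$ implies $H=\overline{\Ad\Gamma}^Z$ preserves $\kt$). Semisimplicity of $H^{nc}$ and complete reducibility then yield an $H$-equivariant decomposition
\[
\g/\kt = (\g/\kt)^{H^{nc}}\oplus V,
\]
where $V$ is the sum of the non-trivial $H^{nc}$-isotypic components. Since $\kl=\g^{H^{nc}}$, the fixed part is tangent to $LY$ at $Y$; on each isotypic component of $V$ the top Lyapunov exponent is \emph{positive}, by semisimplicity of $H^{nc}$ and the law of large numbers. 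I lift this to an $H$-invariant decomposition of a complement to $\kt$ in $\g$ and, via the exponential map, parametrize a tubular neighborhood of $Y$ as $(v,w,y)\mapsto\exp(v+w)\cdot y$, with $v$ in the first factor, $w\in V$, and $y\in Y$, all small.

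Set $f_Y(x)=-\log\|w(x)\|$ on the tube and $f_Y=0$ outside, where $\|\cdot\|$ is an $H^c$-invariant Euclidean norm on $V$. Shrinking the tube radius $\delta$ so that the tube avoids $K$ (possible since $K$ is at positive distance from the closed set $Y$) ensures boundedness on $K$; by construction the upper level sets $\{f_Y>M\}$ are neighborhoods of $UY$ with $U=\exp(B_\delta\cap\kl)$. For probable decrease, if $x=\exp(v+w)y$ satisfies $f_Y(x)>T$ (so $\|w\|$ is very small) and $g$ is $\mu^{*n}$-distributed, the $\Gamma$-invariance of $Y$ yields
\[
gx = \exp(\Ad(g)(v+w))\cdot gy
\]
up to a Baker--Campbell--Hausdorff correction of size $O(\|v+w\|^2)$. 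Because $V$ is $H$-invariant, the $V$-component of $\Ad(g)(v+w)$ is exactly $\Ad(g)|_V w$, so Lemma~\ref{furst} applied to the representation of $H$ on $V$ (which has no $H^{nc}$-fixed vector) supplies $\lambda,N>0$ with $\log\|\Ad(g)|_V w\|\geq\log\|w\|+n\lambda$ with probability $\geq 1-\alpha$ for $n\geq N$. This gives $f_Y(gx)\leq f_Y(x)-n\lambda$ once $T$ is large enough to absorb the BCH error. Control of variations holds with $\Phi=\Ad$: when both $x$ and $gx$ lie in the tube, $|f_Y(gx)-f_Y(x)|\leq C\log\|\Ad(g)\|$, with $C$ depending only on $\dim\g$ by unimodularity of $G$, and the first-moment hypothesis ensures $\int\log\|\Ad g\|\,d\mu<+\infty$.

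The main obstacle I anticipate is control of variations when $x$ lies outside the tube while $gx$ enters it: then $f_Y(x)=0$ but $f_Y(gx)$ could a priori be huge, so one must show it is bounded by $C\log\|\Ad(g)\|$. Writing $x=g^{-1}(gx)=\exp(\Ad(g^{-1})(v+w))\cdot g^{-1}y$ for the decomposition of $gx$, the hypothesis that $x$ lies outside the tube forces $\|\Ad(g^{-1})(v+w)\|\gtrsim\delta$, hence $\|w\|\gtrsim\delta/\|\Ad(g^{-1})\|$; unimodularity bounds $\log\|\Ad(g^{-1})\|$ by $(\dim\g-1)\log\|\Ad(g)\|$, yielding the desired estimate. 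Secondary technicalities---smoothness of the cutoff at the boundary of the tube and uniqueness of the decomposition $x=\exp(v+w)y$ modulo the discrete stabilizer of $y$ in $\Lambda$---are resolved by taking $\delta$ below the injectivity radius of the tubular identification over $Y$.
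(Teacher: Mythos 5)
Your two-step architecture is exactly the paper's: reduce via Lemma~\ref{drift-union}~b) and Proposition~\ref{driftcusp} to a local drift on compact sets, then build the local drift function from a transverse direction in $\g$ that $H^{nc}$ expands. The algebraic input (that $\mathrm{Lie}(G_Y)$ and $\kl$ are $H$-invariant, complete reducibility produces an $H$-invariant transverse complement, and Furstenberg positivity gives expansion on it) is also the same; your $V$ is the paper's $\kt$ from Lemma~\ref{transverse}.

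There are, however, two places where your version diverges from the paper in ways that matter. First, the expansion estimate: Lemma~\ref{furst} is stated for the corrected quasi-norm on $\wedge^*\R^d$, not for an arbitrary $H$-representation, so it cannot be invoked verbatim on $V\subseteq\g$. The paper instead cites the law of large numbers and Furstenberg's theorem (\cite[Theorems~4.28, 4.32]{BQRW}) directly for the representation on the $H$-invariant complement $\mathfrak b$ of $\kl$; this is what you actually need and what you should cite. Second, and more substantively, your drift function is a hard cutoff ($f_Y=0$ outside a $\delta$-tube), whereas the paper's $f_{UY}(x)=\max\{-\log\norm v : v\in B_\kt(0,1),\,x\in e^v UY\}$ only uses $\delta$ for the local \emph{uniqueness} of the transverse coordinate (Lemma~\ref{transverse}), not to truncate $f$; the $\kl$- and $\ks$-displacements are absorbed into the set $UY$ and never enter the drift function. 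This is precisely what makes control of variations painless in the paper: $gx\in e^w UY$ immediately gives $x\in e^{\Ad(g^{-1})w}g^{-1}UY$, and one just compares $\norm w$ with $\norm{\Ad(g^{-1})w}$ and uses unimodularity.

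Your sketched fix of the boundary case has a real gap as written. From $x\notin$ tube and $gx=\exp(v+w)y$ you derive $\norm{\Ad(g^{-1})(v+w)}\gtrsim\delta$, hence a lower bound on $\norm{v+w}$ in terms of $\norm{\Ad g^{-1}}$ — but that does \emph{not} give the lower bound on $\norm w$ alone which you need to bound $f_Y(gx)=-\log\norm w$: the displacement could be concentrated in the fixed ($\kl$-) component $v$. This can be repaired by choosing the norm so that $\Ad(H)$ acts isometrically on the fixed part (possible since $H$ acts on $\kl$ through the compact quotient $H/H^{nc}$), forcing $\norm{\Ad(g^{-1})v}=\norm v<\delta$ and hence $\norm{\Ad(g^{-1})w}\geq\delta$; but you should say this. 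Also, your BCH worry is unnecessary: $g\exp(v+w)y=\exp\bigl(\Ad(g)(v+w)\bigr)gy$ holds exactly, and since $\Ad(g)$ respects the $H$-invariant direct-sum decomposition, the $V$-component of the image is exactly $\Ad(g)w$; no quadratic error arises. Finally, boundedness of $f_Y$ on $K$ requires shrinking \emph{both} $\delta$ \emph{and} $U$, since for a large $U$ the tube around $UY$ may meet $K$ even when $\delta$ is tiny.
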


It is easy to check that Proposition \ref{drifty} implies  \ref{thB'}.

\begin{proof}[Proof of  \ref{thB'}]
For every $l\in L_{0}$, we have $K\subseteq \Omega\smallsetminus lY$. Proposition \ref{drifty} implies that the $\mu$-walk on $\Omega$ has a controlled drift away from the collection $\cS$ of $\{(lY, UlY)\}$ where $U$ varies among the neighborhoods of $1$ in $L$. Choose an  arbitrary  $U_{l}$ such that the conclusions of Proposition \ref{cd-rec} hold for the $\mu$-walk on $\Omega$.  As $L_{0}$ is compact, there exists a finite covering $L_{0}\subseteq U_{l_{1}}l_{1}\cup \dots \cup U_{l_{m}}l_{m}$.   Let $\eps>0$, and for $i=1,\dots, m$, choose a neighborhood $O'_{i}$ of $U_{l_{i}}l_{i}Y$ as in Proposition \ref{cd-rec} for the constant  $\eps/m$. Then $O'=\cup_{i=1}^mO'_{i}$ is  a neighborhood of $ L_{0}Y$  which satisfies the recurrence properties $(i)$ and $(ii)$ announced in  \ref{thB'}. 
\end{proof}

The rest of the section is dedicated to the proof of  Proposition \ref{drifty}. It can be decomposed into two independent statements. 

\begin{lemma}  \label{ld-UY}
The $\mu$-walk on $\Omega$ has a controlled drift away from $\cS_{L}$ on compact subsets of $\Omega$. 
\end{lemma}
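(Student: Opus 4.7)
The plan is to build a drift function $f$ that captures the ``normal distance'' from $x$ to $Y$ in the direction transverse to $L$, using the fact that $H^{nc}$ acts on this transverse direction with no trivial subrepresentation. Since $\Gamma\subseteq G_Y$, the subalgebra $\g_Y$ is $\Ad(\Gamma)$-invariant, hence $H$-invariant; by complete reducibility of the $H^{nc}$-action on $\g$, one can choose an $H^{nc}$-invariant complement $\mathfrak{n}=\mathfrak{n}_L\oplus\mathfrak{n}'$ to $\g_Y$ in $\g$ with $\mathfrak{n}_L:=\mathfrak{n}\cap\kl$ and $\mathfrak{n}'$ an $H^{nc}$-invariant complement of $\mathfrak{n}_L$ in $\mathfrak{n}$. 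The representation of $H^{nc}$ on $\mathfrak{n}'\cong\g/(\g_Y+\kl)$ has no nonzero fixed vector, because $\g^{H^{nc}}=\kl$ and $\mathfrak{n}'\cap\kl=0$. Since $H^{nc}$ is semisimple and the Lyapunov vector $\sigma_\mu$ lies in the open Weyl chamber, all isotypic Lyapunov exponents on $\mathfrak{n}'$ are strictly positive, and the analogue of Lemma~\ref{furst} for the representation of $\Gamma$ on $\mathfrak{n}'$ provides a Benoist--Quint quasi-norm $\abs{\cdot}_{\mathfrak{n}'}$ and $\lambda>0$ such that for every $\alpha>0$, every sufficiently large $n$, uniformly in $v'\in\mathfrak{n}'\setminus\{0\}$,
\[
\mu^{*n}\Bigl(g\in G\,:\,\log\frac{\abs{\Ad(g)v'}_{\mathfrak{n}'}}{\abs{v'}_{\mathfrak{n}'}}\geq n\lambda\Bigr) \geq 1-\alpha/2.
\]

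Next I define $f$ via a tubular neighborhood. Since $Q\cap Y$ is a compact piece of $Y$, there is a small open neighborhood $V$ of $0$ in $\mathfrak{n}$ such that $(v,y)\mapsto \exp(v)y$ is a diffeomorphism from $V\times (Q\cap Y)$ onto an open tubular neighborhood $\cT$ of $Q\cap Y$ in $\Omega$. For $x\in\cT$ write $x=\exp(v_L(x)+v'(x))y(x)$ uniquely, and set
\[
f(x) = \begin{cases} \max\bigl(0,\,-\log\abs{v'(x)}_{\mathfrak{n}'}\bigr) & \text{if } x\in\cT,\\ 0 & \text{if } x\notin\cT.\end{cases}
\]
Then $\{f=+\infty\}=\exp(V\cap\mathfrak{n}_L)\cdot(Q\cap Y)$ is contained in $UY=:Y'$ for some neighborhood $U$ of $1$ in $L$, which we shrink further so that $K\cap Y'=\varnothing$. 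The upper level sets $\{f\geq T\}$ are open neighborhoods of $Y'$ by lower semicontinuity of $f$, and $f$ is bounded on $K$ because $K$ has positive distance from $Y'$.

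For the probable decrease on $Q\cap\{f\geq T\}$, take $x\in Q$ with $\abs{v'(x)}_{\mathfrak{n}'}\leq e^{-T}$. Since $g\in\Gamma\subseteq G_Y$, one has $gx=\exp(\Ad(g)v(x))\cdot gy(x)$, and because $\g_Y+\kl$ is $H^{nc}$-invariant, projection onto $\mathfrak{n}'$ intertwines the $H^{nc}$-actions; thus whenever $\Ad(g)v(x)\in V$ we get $f(gx)=-\log\abs{\Ad(g)v'(x)}_{\mathfrak{n}'}$, and the expansion above yields $f(gx)\leq f(x)-n\lambda$ with $\mu^{*n}$-probability $\geq 1-\alpha/2$. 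The probability that $\Ad(g)v(x)$ escapes $V$ is bounded using Markov's inequality applied to $\log\norm{\Ad(g)}$, whose $\mu^{*n}$-mean is at most $n\int\log\norm{\Ad}\,d\mu$; taking $T$ large enough makes this escape probability $\leq\alpha/2$, and even in that event $f(gx)=0\leq f(x)-n\lambda$ as soon as $T\geq n\lambda$. Finally the control-of-variations condition $f(gx)-f(x)\leq C\log\norm{\Phi(g)}$ holds with $\Phi(g)=\Ad(g)\oplus\Ad(g^{-1})^T$ acting on $\g\oplus\g^*$ and $C=1/\min_j\lambda_1^{(j)}$, because by definition of the quasi-norm, $\abs{v'}_{\mathfrak{n}'}/\abs{\Ad(g)v'}_{\mathfrak{n}'}\leq\norm{\Ad(g)^{-1}}^{1/\min_j\lambda_1^{(j)}}$. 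The main technical obstacle will be organizing the tubular neighborhood construction carefully -- the normal bundle of $Y$ is only locally trivial and its injectivity radius degenerates toward the cusps of $Y$ -- and verifying that the Benoist--Quint expansion on $\mathfrak{n}'$ is genuinely uniform over arbitrarily small $v'$, which is what allows a single threshold $T$ to work simultaneously for all $x\in Q\cap\{f\geq T\}$.
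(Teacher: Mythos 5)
Your overall strategy is the right one and is close to the paper's: identify an $\Ad$-invariant transverse direction $\mathfrak{n}'\cong\g/(\g_Y+\kl)$ on which all Lyapunov exponents are positive, set $f=-\log$ of the transverse coordinate, and invoke the Furstenberg/law-of-large-numbers expansion to get the probable decrease. But there are two genuine gaps, of which the second is fatal to the construction as written.

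First, a small one: you only require $\mathfrak{n}$ and $\mathfrak{n}'$ to be $H^{nc}$-invariant, but for the walk to preserve the decomposition you need $\Ad(g)$-invariance for every $g\in\Gamma$, i.e.\ $H$-invariance. This is obtainable because $H$ is reductive (it has compact center), but it must be stated and used; the paper's transverse space $\kt$ is chosen $H$-invariant for precisely this reason.

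Second, and more seriously: you build the chart by taking a tubular neighborhood $\cT=\exp(V)\cdot(Q\cap Y)$ of the \emph{compact} piece $Q\cap Y$ and set $f=0$ outside $\cT$. This has two consequences that break Definition~\ref{cd}. (a) The upper level sets $\{f\geq T\}$ for $T>0$ are contained in $\cT$, hence they are not neighborhoods of $Y'=UY$ — they miss every point of $UY$ near a cusp of $Y$ outside $Q$. (b) The identity $f(gx)=-\log\abs{\Ad(g)v'(x)}_{\mathfrak{n}'}$ that your probable-decrease argument depends on requires the decomposition $gx=\exp(\Ad(g)v(x))\,gy(x)$ to be the canonical one for $\cT$; but $gy(x)$ need not lie in $Q\cap Y$, and in fact typically does not when $y(x)$ is near $\partial Q$. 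When $gy(x)\notin Q\cap Y$ but $gx\in\cT$, the actual transverse coordinate $v'(gx)$ of $gx$ bears no controlled relation to $\Ad(g)v'(x)$, and your bound $f(gx)\leq f(x)-n\lambda$ is unjustified. You flag exactly this difficulty at the end ("the injectivity radius degenerates toward the cusps of $Y$") but do not resolve it. The paper's Lemma~\ref{transverse} is precisely the device that closes this gap: the function $f_{UY}$ is defined in terms of the global parametrization $x\in e^vUY$, where $UY$ is the full $L$-orbit of $Y$; consequently $\{f_{UY}\geq T\}\supseteq e^{B_{\kt}(0,e^{-T})}UY$ is always a genuine neighborhood of $UY$, and the uniqueness statement of Lemma~\ref{transverse} (valid for $x$ in any fixed compact $M$, chosen to contain $gQ$ with high $\mu^{*n}$-probability) guarantees that the transverse coordinate of $gx$ really is $\Ad(g)v$ whenever $\Ad(g)v$ is small and $gx\in M$. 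Finally, note a small simplification available in the paper's line: since $\kt\cap\kl=0$, every isotypic Lyapunov exponent on $\kt$ is positive, so the ordinary Euclidean norm already works in place of the Benoist--Quint quasi-norm.
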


\begin{Lemma-drift-union}
In the general context of Section \ref{Sec3}, if a random walk has  controlled drift away from infinity and controlled drift from some $\cS\subseteq \mathscr{P}(\overline{\Omega})\times \mathscr{P}(\overline{\Omega})$ on compact subsets, then it has controlled drift away from $\cS$.  
\end{Lemma-drift-union}

\subsection{Local drift}

Let us prove Lemma \ref{ld-UY}. To construct  a local drift function  for some $UY$, we exhibit a transverse direction to $UY$ which is expanded by the walk.  The Lie algebra  $\mathfrak{g}$ of $G$ is endowed  with an arbitrary norm, and given a subspace $\mathfrak{t}\subseteq \mathfrak{g}$, we set $B_{\kt}(0,\delta)=\{v\in \kt\ |\ \|v\|<\delta \}$. 
\begin{lemma}[Transverse coordinate] 
\label{transverse}
Let $Y\in S_{\Omega}(\Gamma)$, denote by $\ks$ be the Lie algebra of $G_Y=\Stab_G Y$,  $\kl$ the Lie algebra of $L$, and fix an $H$-invariant subspace $\kt\subseteq\g$ such that
\[
\g = \kt \oplus (\kl+\ks).
\]
For any compact subset $M\subset\Omega$,  there exists a  neighborhood $U$ of $1$ in $L$ and $\delta>0$ such that for every $x\in M$, 
there is at most one $v\in B_{\kt}(0,\delta)$ for which $x\in e^vUY$.
\end{lemma}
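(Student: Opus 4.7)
My plan is to argue by contradiction and compactness. Suppose the conclusion fails for some compact $M\subseteq\Omega$. Then we can extract shrinking neighborhoods $U_{n}\searrow\{e\}$ in $L$, parameters $\delta_{n}\searrow 0$, points $x_{n}\in M$, distinct vectors $v_{1,n}\neq v_{2,n}$ in $B_{\kt}(0,\delta_{n})$, elements $u_{i,n}\in U_{n}$, and $y_{i,n}\in Y$ with
\[
x_{n}=e^{v_{1,n}}u_{1,n}y_{1,n}=e^{v_{2,n}}u_{2,n}y_{2,n}.
\]
Passing to a subsequence, $x_{n}\to x_{\infty}\in M$. Since $v_{i,n}\to 0$ and $u_{i,n}\to e$, the points $y_{i,n}=u_{i,n}^{-1}e^{-v_{i,n}}x_{n}$ also converge to $x_{\infty}$, and closedness of $Y$ gives $x_{\infty}\in Y$.

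Next I will work locally at $x_{\infty}$. Choose $g_{\infty}\in G$ with $x_{\infty}=g_{\infty}\Lambda$, so that $\Stab_{G}(x_{\infty})=g_{\infty}\Lambda g_{\infty}^{-1}$ is discrete and the map $Z\mapsto e^{Z}x_{\infty}$ is a diffeomorphism from a ball around $0$ in $\g$ onto a neighborhood of $x_{\infty}$. Because $Y$ is the $G_{Y}$-orbit of $x_{\infty}$ and $\ks$ is the Lie algebra of $G_{Y}$, for $n$ large enough we can write $u_{i,n}=e^{\kappa_{i,n}}$ with $\kappa_{i,n}\in\kl$ small and $y_{i,n}=e^{\sigma_{i,n}}x_{\infty}$ with $\sigma_{i,n}\in\ks$ small.

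The key geometric observation is the local fiber structure of the smooth map
\[
F\colon\kt\times\kl\times\ks\to G,\quad (v,\kappa,\sigma)\mapsto e^{v}e^{\kappa}e^{\sigma}.
\]
Its derivative at the origin is $(v,\kappa,\sigma)\mapsto v+\kappa+\sigma\in\g$; the hypothesis $\g=\kt\oplus(\kl+\ks)$ makes this a surjection with kernel $\{(0,\kappa,-\kappa):\kappa\in\kl\cap\ks\}$. The constant rank theorem therefore identifies the fibers of $F$ near $(0,0,0)$ with translates of this kernel, so any two preimages of the same point share the same $\kt$-coordinate.

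To close the argument, observe that the two expressions for $x_{n}$ in $\Omega=G/\Lambda$ lift to an inclusion
\[
\bigl(e^{v_{2,n}}e^{\kappa_{2,n}}e^{\sigma_{2,n}}\bigr)^{-1}e^{v_{1,n}}e^{\kappa_{1,n}}e^{\sigma_{1,n}}\in g_{\infty}\Lambda g_{\infty}^{-1}.
\]
The left-hand side tends to $e$ while the right-hand side is discrete, so for all large $n$ the left side equals $e$. This yields $F(v_{1,n},\kappa_{1,n},\sigma_{1,n})=F(v_{2,n},\kappa_{2,n},\sigma_{2,n})$ in $G$, and the fiber description forces $v_{1,n}=v_{2,n}$, contradicting our choice. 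The main subtlety, and the reason I work with the three-variable $F$ rather than trying to exponentiate directly out of $\kl+\ks$, is that $\kl+\ks$ need not itself be a Lie subalgebra; everything else is standard compactness together with the submersion theorem.
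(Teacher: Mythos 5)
Your overall strategy --- contradiction plus compactness, reducing via discreteness of $g_\infty\Lambda g_\infty^{-1}$ to an equality $e^{v_1}e^{\kappa_1}e^{\sigma_1}=e^{v_2}e^{\kappa_2}e^{\sigma_2}$ in $G$ --- is a reasonable alternative to the paper's direct construction of the product neighborhood. But there is a genuine gap at the step you call the key geometric observation. The constant rank theorem only says that the fibers of $F$ near the origin are submanifolds which become coordinate slices \emph{after} auxiliary diffeomorphisms of source and target; it does not identify them with affine translates of $\ker DF_0$ in the original $(\kt,\kl,\ks)$ coordinates. By the implicit function theorem the fiber through $(0,0,0)$ is a graph over $\ker DF_0=\{(0,\kappa,-\kappa):\kappa\in\kl\cap\ks\}$ with higher-order corrections in the complementary directions, and nothing in what you wrote forces the $\kt$-component of those corrections to vanish. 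So the conclusion $v_1=v_2$ does not follow from invoking the constant rank theorem.

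The conclusion is nevertheless true, but proving it is exactly where the real work lies, and it is what the paper does by introducing a vector-space complement $\kl'\subseteq\kl$ of $\kn:=\kl\cap\ks$ (so that $\kl+\ks=\kl'\oplus\ks$). The key algebraic fact is that for small $\kappa\in\kl$, $\sigma\in\ks$, one can write $e^\kappa e^\sigma=e^{\kappa'}e^{\sigma'}$ with $\kappa'\in\kl'$, $\sigma'\in\ks$ small. This uses that $\kn$ is a Lie subalgebra of both $\kl$ and $\ks$: decompose $e^\kappa=e^\alpha e^\beta$ with $\alpha\in\kl'$, $\beta\in\kn$ (a local diffeomorphism near the identity of the subgroup with Lie algebra $\kl$), then absorb $e^\beta e^\sigma\in\exp\ks$. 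Once this is in place, the map $(v,\kappa',\sigma')\mapsto e^v e^{\kappa'}e^{\sigma'}$ from $\kt\times\kl'\times\ks$ is a local diffeomorphism near the origin, and uniqueness of the $\kt$-coordinate follows. Your proof needs this step; without it the constant rank theorem does not close the argument.
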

\begin{proof}[Proof of  Lemma \ref{transverse}]

Let $\kl'\subset\kl$ be a subspace such that $\kl+\ks=\kl'\oplus\ks$, and $O$ a relatively compact open neighborhood of $M$ in $\Omega$.
There exist small neighborhoods of the origin $V_{\kt}\subset\kt$, $V_{\kl'}\subset\kl'$ and $V_{\ks}\subset\ks$ such that
\[
\begin{array}{ccc}
V_{\kt}\times V_{\kl'}\times (Y\cap O) & \to & \Omega\\
(v_{\kt},v_{\kl'},y) & \mapsto & e^{v_{\kt}}e^{v_{\kl'}} y
\end{array}
\]
induces a diffeomorphism on its image and $e^{V_{\kl'}}e^{V_{\ks}}$ contains a neighborhood $U$ of $1$ in $L$.

Moreover, reducing those neighborhoods if necessary, we may assume that
\[
e^{V_{\ks}}e^{-V_{\kl'}}e^{-V_{\kt}}M \subset O.
\]
Given $x\in M$, assume that for some $v_i\in V_{\kt}$, $u_i\in U_0$ and $y_i\in Y$, $i=1,2$,
\[
x = e^{v_1}u_1y_1 = e^{v_2}u_2y_2.
\]
Writing $u_i=u_i's_i$, with $u_i'\in e^{V_{\kl'}}$ and $s_i\in e^{V_{\ks}}$, we find
\(
e^{v_1}u_1'(s_1y_1)=e^{v_2}u_2'(s_2y_2)
\)
and since $s_iy_i=s_iu_i^{-1}e^{-v_i}x\in Y\cap O$, this implies $u_1'=u_2'$, $s_1y_1=s_2y_2$, and $v_1=v_2$.
\end{proof}

\bigskip
We now use the previous lemma to prove  local drift away from some $UY$. Given $Y\in S_{\Omega}(\Gamma)$, and  $U\subseteq L$ a  neighborhood of $1\in L$, we introduce for every  $x\in\Omega$,  the set $\kt_x=\{v\in B_{\kt}(0,1) \ |\ x\in e^vUY\}$ and define a function on $\Omega$ by

\[
f_{UY}(x) = \left\{
\begin{array}{ll}
 \max_{v\in\kt_x}\log \frac{1}{\norm{v}} & \mbox{if}\ \kt_x\neq\varnothing\\
0 & \mbox{otherwise}.
\end{array}
\right.
\]

The proof of Lemma \ref{ld-UY} will follow from the observation that for any $Y\in S_{\Omega}(\Gamma)$ and any compact set $Q\subseteq \Omega$, we can choose $U$ small enough so that the function $f_{UY}$ statistically decreases when the walk passes through $Q$.

\begin{proof}[Proof of Lemma \ref{ld-UY}]
Let $\mathfrak{b}\subseteq \g$ be the unique $H$-invariant subspace such that  $\mathfrak{g}=\mathfrak{b}\oplus \mathfrak{l}$.   Since $\mathfrak{b}$ intersects $\kl$ trivially, the law of large numbers \cite[Theorem 4.28]{BQRW} and Furstenberg's theorem on the positivity of the first Lyapunov exponent \cite[Theorem 4.32]{BQRW} show that there exists $\lambda>0$ such that for every $\alpha>0$, there exists $N\geq 0$ such that  for every $n\geq N$, and every $v\in \mathfrak{b}\setminus\{0\}$,
\begin{equation}\label{lgn}
\mu^{*n}\left(g \in G\ \left|\ \log\frac{\norm{g v}}{\norm{v}} \geq n\lambda \right.\right) \geq 1-\frac{\alpha}{3}.
\end{equation}

Fix $n\geq N$. Let $Y\in S_{\Omega}(\Gamma)$ and consider $K, Q\subseteq \Omega$ compact sets with $K\cap Y=\varnothing$.  We can assume $\kl+ \ks\neq \g$ otherwise $UY$ is open in $\Omega$ for any open set $U\subseteq L$, and the drift statement follows trivially by letting $f$ be infinite on $UY$ and null elsewhere. Consider a  compact set $M\subseteq \Omega$ such that 
\begin{equation}\label{M}
\mu^{*n}(g\in G\ |\ gQ\subseteq M) \geq 1-\frac{\alpha}{3}
\end{equation}
and choose $\kt\subseteq \mathfrak{b}$, $U\subseteq L$, $\delta>0$  as in Lemma \ref{transverse}, with $U$ open relatively compact in $L$, and so that $K\cap \overline{U}Y=\varnothing$. Set $Y'=UY$ and $f=f_{UY}$. We show that $f$ satisfies the properties required for controlled drift. The fact that $f$ is bounded on $K$ comes from the observation that $\overline{U}Y$ is closed which, combined with the condition  $K\cap \overline{U}Y=\varnothing$, imposes that the distance between $K$ and $UY$ is bounded below by some positive constant.
The condition on upper-level sets is straightforward. 

Let us check the condition \ref{pdec} of \ref{cd}. Set $T>0$ so large that
\begin{equation}\label{n}
\mu^{*n}(g\in G\ |\ \norm{\Ad g}< e^T\delta) \geq 1-\frac{\alpha}{3}.
\end{equation}
 Let  $x\in Q$ such that  $T\leq f(x)<+\infty$, i.e. $x\in e^vUY$ with $0<\norm{v}\leq e^{-T}$.
Let $g\in G$ be any element satisfying  conditions \eqref{lgn}, \eqref{M}, and \eqref{n}.  By \eqref{n} one has $gx \in e^{\Ad(g)v}UY$ with $\norm{\Ad(g)v}\leq\norm{\text{Ad}g}\norm{v}<\delta$. As $gx\in M$ by  \eqref{M}, our choice for $(U,\delta)$ yields that $\kt_{gx}=\{\Ad(g)v\}$, hence $f(gx)=\log \frac{1}{\|\Ad(g)v\|}$. By  \eqref{lgn}, 
\[
f(gx) \leq f(x) - n\lambda.
\]

To check \ref{cvar},  observe that for $g\in \Gamma$, one has $\norm{\Ad g^{-1}}^{-1} \,\|v\|\leq \norm{\Ad g v}$ and $\norm{\Ad g^{-1}}\leq  \norm{\Ad g}^{\kappa}$ for some constant $\kappa>0$ depending only on the normed vector space $(\mathfrak{g}, \|.\|)$.
So $C=\kappa$ leads to the bound announced in the lemma. 

\end{proof}

\subsection{From local drift to global drift}

\begin{proof}[Proof of Lemma  \ref{drift-union} b)]

\label{proof-du-ii}
In this proof, the notations of \ref{cd} referring to the drift away from infinity will be indexed by $0$, and those referring to the drift away from $\cS$ on compact subsets will be indexed by $1$. Now let $(\Phi_{i},\lambda_{i}, C_{i})_{i=0,1}$ be given by \ref{cd}. Set $c>1$ such  that $c>\lambda_{0,\infty}$ and $\frac{c\lambda_{0}}{2}> 2C_{1}\lambda_{1, \infty}$ where $\lambda_{i, \infty}\geq 0$ is the first Lyapunov exponent of $\Phi_{i \ast} \mu$. Fix  $\alpha>0$ and denote by $N_{0},N_{1}\geq0$ the associated constants in \ref{cd}.  Fix $n\in \N$ large enough so that $n\geq \max(N_{0}, N_{1})$ and 
\[
\mu^{\ast n} (g\in G \,|\, \log\|\Phi_{0}(g)\|\leq c n)\geq 1-\alpha
\]
\[
\mu^{\ast n} (g\in G \,|\, \log\|\Phi_{1}(g)\|\leq \frac{c\lambda_{0}}{2C_{1}}n)\geq 1-\alpha
\]
Let $Y\in S_{\Omega}(\Gamma)$, let $K\subseteq \Omega \smallsetminus Y$ be a compact set. The controlled drift away from infinity associates to $K$ a proper drift function $f_{0}:\Omega\rightarrow[0,+\infty]$ and a threshold $T_{0}>0$ as in  \ref{cd}. Let $T'>0$ be a large parameter  which will be specified later.  Choose a compact subset $Q_{1}\subseteq \Omega$ containing $K\cup \{f_{0}\leq T'\}$, and consider the objects  $Y'\in S_{Y}$, $f_{1}:\Omega\rightarrow[0,+\infty]$, $T_{1}>0$ associated to $(K,Q_{1})$ as in \ref{cd} for the drift away from $\cS$ on compact sets. 

We are now all set up to define our drift function:  for $x\in \Omega$, set
\[
f(x) = c_x f_0(x) + f_{1}(x) 
\]
where
\[
c_x = \left\{
\begin{array}{ll}
c & \mbox{if}\ f_0(x)\geq T'\\
c\cdot\frac{f_0(x)-T_{0}}{T'-T_{0}} & \mbox{if}\ T_{0} \leq f_0(x)\leq T'\\
0 & \mbox{if}\ f_0(x)\leq T_{0}.
\end{array}
\right.
\]

Let us check that $f$ satisfies the properties enumerated in Definition \ref{cd}.  The only non-trivial conditions to check are those enumerated as \ref{pdec} and \ref{cvar}. 

\bigskip
\noindent\textbf{Probable decrease}\\
We prove that if the parameter $T'$ above is chosen sufficiently large, then $f$ satisfies the  property \ref{pdec} in \ref{cd}, with $T=cT'+T_{1}$ and $\lambda= \frac{1}{2}\min(\lambda_{0}, \lambda_{1})$. Let $x\in \Omega$  such that $f(x)\in [T,+\infty)$. 
 We study separately  different cases, according to the values of $f_0(x), f_{1}(x)$. Observe  that the definition of $T$ imposes that $f_0(x)>T'$ or $f_{1}(x)>T_{1}$.

\begin{enumerate}[label=(\alph*)]
\item \underline{$f_0(x)>T'$}\\

In this case, the conditions
\(
\left\{
\begin{array}{l}
c_{gx}f_0(gx)\leq cf_0(x)-nc\lambda_{0}\\
C_{1}\log\norm{\Phi_{1}(g)}\leq \frac{nc\lambda_{0}}{2}
\end{array}
\right.
\)
are simultaneously satisfied with $\mu^{\ast n}$-probability at least $1-2\alpha$ and ensure that 
$$f(gx)\leq f(x)-\frac{nc\lambda_{0}}{2}$$

\item \underline{$T_{0}<f_0(x)\leq T'$ and $f_{1}(x)>T_{1}$}\\
Then, with $\mu^{\ast n}$-probability at least $1-2\alpha$, one has both
\(
\left\{
\begin{array}{l}
f_0(gx)\leq f_0(x)-n\lambda_{0}\\
f_{1}(gx) \leq f_{1}(x)-n\lambda_{1}.
\end{array}
\right.
\)
This certainly implies $c_{gx}\leq c_x$ and therefore
\[
f(gx)\leq f(x)-n\lambda_{1}.
\]

\item \underline{$f_0(x)\leq T_{0}$ and $f_{1}(x)>T_{1}$}\\
With $\mu^{\ast n}$-probability at least $1-2\alpha$, one has
\(
\left\{
\begin{array}{l}
f_0(gx)\leq T_{0}+ C_{0}c n \\
f_{1}(gx) \leq f_{1}(x)-n\lambda_{1}.
\end{array}
\right.
\)

This implies
\[
c_{gx}f_0(gx) \leq c\cdot\frac{C_{0}cn}{T'-T_{0}}f_0(gx) \leq c\cdot\frac{C_{0}cn}{T'-T_{0}}(T_{0}+C_{0}cn) \leq \frac{n\lambda_{1}}{2}
\]
up to choosing $T'$ large enough in the definition of $f$,  
and therefore
\[
f(gx) \leq  f(x)-\frac{n\lambda_{1}}{2}.
\]

\end{enumerate}
Putting together (a), (b), (c), we obtain for every $ x\in f^{-1}[T,+\infty)$,
\[
\mu^{*n}(g\in G\ |\ f(gx)\leq f(x)-n\lambda) \geq 1-2\alpha
\]
 where $\lambda= \frac{1}{2}\min(\lambda_{0}, \lambda_{1})>0$. 

\bigskip
\noindent\textbf{Control of variations}\\
  We show that if $T'$ is chosen large enough in the definition of $f$, then we have the bound \ref{cvar} of \ref{cd}.  with $C=3cC_{0}+C_{1}$ and $\Phi=\Phi_{0}\oplus \Phi_{1}$.  Let $x\in\Omega$, $g\in \Gamma$. We can write \[
f(gx)-f(x) =  \left(c_{gx}f_0(gx)-c_xf_0(x)\right)+  \left(f_{1}(gx)-f_{1}(x)\right).
\]
Definition \ref{cd} guarantees that
$$f_{1}(gx)-f_{1}(x)\leq C_{1}\log \|\Phi_{1}(g)\|$$
so it remains to bound the first  term. We can assume that   $f_0(x)\leq f_0(gx)$.
Then,
 \begin{align*}
c_{gx}f_0(gx)-c_xf_0(x) &= c_{gx}\left(f_0(gx)-f_0(x)\right)+ f_0(x)(c_{gx}-c_x) \\
&\leq  c \,C_{0}\log \|\Phi_{0}(g)\|+ f_{0}(x)(c_{gx}-c_x)
 \end{align*}
 If  $f_{0}(x)>T'$, then $c_{x}=c_{gx}=c$ and we are done. Otherwise,  observing that
 \[
c_{gx}-c_x \leq  c\cdot\frac{f_0(gx)-f_0(x)}{T'-T_{0}} \leq c\cdot\frac{\,C_{0}\log\norm{\Phi_{0}(g)}}{T'-T_{0}}
\]
  we obtain 
  \begin{align*}
 f_0(x)(c_{gx}-c_x)&\leq \frac{T'}{T'-T_{0}}c\,C_{0}\log\norm{\Phi_{0}(g)}\\
&\leq 2c\,C_{0}\log\norm{\Phi_{0}(g)}
 \end{align*}
provided $T'$ is chosen large enough in the definition of $f$. 
In any case, we conclude that for $x\in f^{-1}[0,+\infty)$, $g\in \Gamma$, 
 $$
f(gx)-f(x) \leq  3cC_{0} \log \|\Phi_{0}(g)\|+ C_{1} \log \|\Phi_{1}(g)\|.$$  

\end{proof}

\section{Equidistribution}
\label{Sec6}

The goal of this section is to establish Theorem \ref{thC}. We shall need the following lemma, obtained by Benoist and Quint \cite[Proposition~2.1]{bq3} in the greater generality of $S$-adic Lie groups, but with the additional assumption that $\Gamma$ is compactly generated.
The proof is almost the same as in \cite{bq3}, but we include it to explain in detail how to modify their argument to avoid this assumption.

\begin{lemma}
\label{countable}
Let $G$ be a real Lie group, $\Lambda$ a lattice in $G$, $\Gamma$ a subgroup of $G$ and denote by $L$ be the centralizer of $\Gamma$ in $G$.
Assume that $\overline{\Ad\Gamma}^Z$ is Zariski connected semisimple and without compact factors.
Then the set $\cSS$ is a countable union of $L$-orbits.
\end{lemma}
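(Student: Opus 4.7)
Following the strategy of Benoist--Quint \cite[Proposition~2.1]{bq3}, I would associate to each $Y\in\cSS$ the pair $(\mathfrak{g}_Y,[g_0])$, where $\mathfrak{g}_Y$ is the Lie algebra of the identity component $G_Y^\circ$ of the stabilizer $G_Y=\Stab_G Y$ and $[g_0]\in Y$ is a base point, and reduce the lemma to showing that such pairs fall into countably many $L$-orbits. The starting observation is that $\Gamma\subseteq G_Y$ normalises $G_Y^\circ$, so $\mathfrak{g}_Y$ is stable under $\Ad(\Gamma)$; since $H=\overline{\Ad\Gamma}^Z$ is Zariski-connected, this stability promotes to stability under the full action of $H$. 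I would then extract from $\mathfrak{g}_Y$ a semisimple Levi subalgebra $\mathfrak{h}_Y$ whose image under $\mathrm{ad}$ equals $\mathfrak{h}$, the Lie algebra of $H$, by applying Levi--Malcev to the subalgebra $\{v\in\mathfrak{g}_Y\mid\mathrm{ad}(v)\in\mathfrak{h}\}$, which is an abelian (central) extension of $\mathfrak{h}$.

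Next, any two such Levis $\mathfrak{h}_Y,\mathfrak{h}^*\subseteq\mathfrak{g}$ sharing the image $\mathfrak{h}$ under $\mathrm{ad}$ are conjugate by an element that acts trivially on $\mathfrak{h}$ through $\Ad$, hence by an element of $L$. After fixing a reference semisimple subalgebra $\mathfrak{h}^*$, I may translate $Y$ by an element of $L$ to achieve $\mathfrak{h}_Y=\mathfrak{h}^*$; this forces the decomposition $\mathfrak{g}_Y=\mathfrak{h}^*\oplus\mathfrak{c}_Y$, where $\mathfrak{c}_Y\subseteq\kl$ is a subalgebra and $\kl$ denotes the Lie algebra of $L$. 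To finish the parametrisation, I would count the pairs $(\mathfrak{c}_Y,[g_0])$ modulo the remaining action of $L$. The crucial input here is the lattice condition: that $g_0^{-1}G_Y^\circ g_0\cap\Lambda$ be a lattice in $g_0^{-1}G_Y^\circ g_0$ confines $(\mathfrak{c}_Y,[g_0])$ to countably many joint configurations, via a standard enumeration of finite-volume homogeneous subsets of $\Omega$ containing the fixed semisimple orbit $\exp(\mathfrak{h}^*)\cdot[g_0]$.

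The main obstacle, and presumably the reason the proof is reproduced here rather than invoked from \cite{bq3}, is to remove the compact-generation assumption on $\Gamma$. My strategy would be to use that, by Noetherianity of the Zariski topology on $\Aut(\mathfrak{g})$, there exists a finitely generated subgroup $\Gamma_0\subseteq\Gamma$ such that $\overline{\Ad\Gamma_0}^Z=H$. Each step above involves $\Gamma$ only through its Zariski closure $H$ or through the containment $\Gamma_0\subseteq G_Y$, both of which remain valid for $\Gamma_0$, so the argument transfers verbatim. The conclusion then follows because every $Y\in\cSS$ is in particular $\Gamma_0$-invariant, and its associated pair $(\mathfrak{g}_Y,[g_0])$ is classified into one of countably many $L$-orbits by the above reasoning applied to the (finitely, hence compactly generated) subgroup $\Gamma_0$.
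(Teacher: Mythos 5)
Your proposed parametrisation by Lie-algebra data $(\g_Y,[g_0])$ with a Levi--Malcev reduction is a genuinely different route from the paper's, which instead forms the group $H=g^{-1}\Gamma_fG_Y^\circ g$, fixes the lattices $\Sigma=H\cap\Lambda$, $\Delta=H^\circ\cap\Lambda$, invokes the countability of $\cT(G,\Delta,\Sigma)$ from \cite[Lemma~2.5]{bq3}, and concludes by counting $\Gamma_f$-fixed points in $G/H$ via \cite[Lemma~2.2]{bq3}. Even granting the Levi step, the final count --- ``confines $(\mathfrak c_Y,[g_0])$ to countably many joint configurations via a standard enumeration'' --- is exactly the content of those two lemmas of \cite{bq3}, not a standard fact, and would need to be reproved in your coordinates.

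The serious gap is in how you remove the compact-generation hypothesis on $\Gamma$. Noetherianity does give a finitely generated $\Gamma_0\leq\Gamma$ with $\overline{\Ad\Gamma_0}^Z=H$, chosen uniformly. But compact generation of $\Gamma$ is used in \cite{bq3} to make the (possibly disconnected) group $\Gamma G_Y^\circ$ compactly generated, so that its lattice is finitely generated; to replace $\Gamma$ by a finitely generated subgroup $\Gamma_f$ there, one must \emph{simultaneously} preserve the key finite-volume property of $H\Lambda=g^{-1}Y$, which requires $\Gamma_f$ to act transitively on the finite set of connected components of $Y$. This forces $\Gamma_f$ to depend on $Y$, as the paper does. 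A uniform $\Gamma_0$ need not permute the components of a given $Y$ transitively, and in fact need not act ergodically on $Y$ at all (ergodicity of $\Gamma$ does not descend to $\Gamma_0$). So the claim that ``the argument transfers verbatim'' for a fixed $\Gamma_0$ is unjustified; it would give you at best a countable union of $L$-orbits covering the larger set of $\Gamma_0$-invariant ergodic homogeneous subsets, but the reduction to a finitely generated group genuinely has to be made per $Y$, as in the paper.
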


\begin{proof}
Fix a dense countable subset $D\subset\Gamma$.
Given $Y\in\cSS$, there exists a finite family of elements $g_1,\dots,g_s$ in $D$ generating a group $\Gamma_f$ such that $\overline{\Ad\Gamma_f}^Z=\overline{\Ad\Gamma}^Z$ and that moreover $\Gamma_f$ acts transitively on the (finite) set of connected components of $Y$.
Let $g\in G$ be such that $g\Lambda\in Y$, and write
\[
H = g^{-1}\Gamma_f G_Y^\circ g.
\]
Since $H\Lambda=g^{-1}Y$ has finite volume, $H\cap\Lambda$ is a lattice in $H$, and $H^\circ\cap\Lambda$ is a lattice in $H^\circ$.
Because $H$ and $H^\circ$ are compactly generated, the groups $\Sigma=H\cap\Lambda$ and $\Delta=H^\circ\cap\Lambda$ are both finitely generated, and since $\Lambda$ contains only countably many finitely generated subgroups, we may assume that they are fixed.
(The finitely generated group $\Gamma_f$ was introduced precisely to ensure that $H$ be compactly generated, so we can use the argument of \cite{bq3}.)
Then, $H$ belongs to the set $\cT(G,\Delta,\Sigma)$ of closed subgroups such that
\begin{enumerate}[label=(\roman*)]
\item $\Sigma$ is a lattice in $H$;
\item $\Delta=\Sigma\cap H^\circ$ is a lattice in $H^\circ$;
\item there exists a subgroup $\Gamma'$ in $G$ such that $\overline{\Ad\Gamma'}^Z$ is connected semisimple without compact factors and acts ergodically on $H/\Sigma$.
\end{enumerate}
By \cite[Lemma~2.5]{bq3}, this set is countable, so we may assume that $H$ is fixed.
Let $L_f$ denote the centralizer of $\Gamma_f$ in $G$, and note that $L_f^\circ=L^\circ$ as their Lie algebras are given by the sets of invariant vectors of $\overline{\Ad{\Gamma_{f}}}^Z=\overline{\Ad{\Gamma}}^Z$.
By \cite[Lemma~2.2]{bq3}, the set of fixed points of $\Gamma_f$ in $G/H$ is a countable union of $L_f^\circ$-orbits, and therefore also a countable union of $L$-orbits.
Since $Y = gH\Lambda$ and $gH$ is a fixed point of $\Gamma_f$ in $G/H$, the lemma follows.
\end{proof}

\begin{proof}[Proof of Theorem~\ref{thC}] With Theorem \ref{thA}, \ref{thB'}, Lemma \ref{countable}, and \cite[Theorem~1.3]{el_rw} at hand, the theorem follows from the argument given in \cite[\S2.3 and \S4.1]{bq3}; we include the proof for completeness.
We show the following assertion, which implies items \ref{oc}, \ref{ce} and  \ref{tr}: for every $x\in \Omega$, there exists a $\Gamma$-invariant ergodic finite-volume  homogeneous closed subset $Y$ containing $x$ such that $\left(\frac{1}{n}\sum_{k=0}^{n-1}\delta_{g_k\dots g_1x}\right)_{n\geq 1}$ converges to $\nu_Y$.

Let $(g_n)_{n\geq 1}$ be a $\mu^{\otimes\N^*}$-typical sequence of instructions, and $\nu$ any weak limit of the sequence $\left(\frac{1}{n}\sum_{k=0}^{n-1}\delta_{g_k\dots g_1x}\right)_{n\geq 1}$.
By Theorem~\ref{thA}, $\nu$ is a probability measure.
Moreover, by Breiman's law of large numbers \cite{Brei} (see also \cite[Lemma~3.2]{bq3}) the measure $\nu$ is $\mu$-stationary.
Consider a disintegration
\[
\nu = \int \nu_\alpha \dd\PP(\alpha)
\]
where each $\nu_\alpha$ is an ergodic $\mu$-stationary measure.
By the classification of stationary measures \cite[Theorem~1.3]{el_rw}, each $\nu_\alpha$ is equal to a homogeneous measure $\nu_{Y_\alpha}$ for some invariant ergodic finite-volume homogeneous closed subset $Y_\alpha$.
Using Proposition~\ref{countable}, we may rewrite this integral as a countable sum
\begin{equation}\label{dis}
\nu=\sum_{i\in\N}\nu_i,
\end{equation}
where each $\nu_i$ is a $\mu$-stationary measure supported on $LY_i$, for some $Y_i\in\cS_\Omega(\Gamma)$.
If $x\not\in LY$, \ref{thB'} implies that $\nu(LY)=0$, so there must exist $Y\in\cS_\Omega(\Gamma)$ such that $x\in Y$.
Choose such $Y$ of minimal dimension.
Replacing $G$ by $G_Y=\{g\in G\ |\ gY=Y\}$, the proof boils down to the case where $\Omega\in S_{\Omega}(\Gamma)$, $x$ does not belong to any proper $Z\in S_{\Omega}(\Gamma)$,  and we need to show that $\nu=\nu_{\Omega}$. This comes directly from a second application of the decomposition \eqref{dis} combined with  \ref{thB'}.
\end{proof}

\section{Conclusion}
\label{Sec7}

To conclude, we mention a few problems that were not discussed here, but might lead to interesting continuations of our study.

\begin{description}
\item [Compact factors.]
When the group $H=\overline{\Ad\Gamma}^Z$ is semisimple but is allowed to have compact factors, it is still possible to describe all ergodic $\mu$-stationary measures on $\Omega$.
Some might not be homogeneous, but they can nevertheless be written as an integral average of homogeneous measures \cite{el_rw}. It should be possible to obtain a generalization of Theorem~\ref{thC} to this setting.
\item [Drift function on $G$.]
On a general quotient $G/\Lambda$, the Benoist-Quint drift function is essentially constructed by embedding $G/\Lambda$ into some space of lattices $\SL_d(\R)/\SL_d(\Z)$.
It would be desirable to have an intrinsic construction of such a function, at least in the case where $G$ is a linear algebraic group defined over $\Q$ and $\Lambda=G(\Z)$ an arithmetic subgroup.
For example, this would lead to simple formulas for the optimal return time to a compact set, similar to those available for $\SL_d(\R)/\SL_d(\Z)$.
\item [Unipotency assumption.]
It was originally suggested in \cite{EskMar04} that Theorem~\ref{thA} could hold under the assumption that $H=\overline{\Ad\Gamma}^Z$ is generated by unipotents.
As observed by Emmanuel Breuillard \cite[Proposition~10.4]{breuillard}, this is not true for conclusion $(i)$ in general.
But this might be the case for $(ii)$, and $(i)$ would then hold for the Cesàro averages $\frac{1}{n}\sum_{k=0}^{n-1}\mu^{*k}*\delta_x$.
In a slightly different direction, the theorem could be valid as stated if $H$ is assumed to be perfect, i.e. if its Lie algebra $\h$ satisfies $\h=[\h,\h]$.
\item [No moment assumption.]
As suggested at the end of \cite{benoist_rec}, one may hope that the results on homogeneous random walks obtained here, and in particular Theorem~\ref{thA}, are still valid without any moment assumption.
\end{description}

\bibliographystyle{abbrv}

\bibliography{bibliography}

\begin{thebibliography}{10}

\bibitem{Benard21}
T.~B{\'e}nard.
\newblock Equidistribution of mass for random processes on finite-volume
  spaces.
\newblock {\em arXiv:2112.06090}, 2021.

\bibitem{benoist_rec}
Y.~{Benoist}.
\newblock {Recurrence on the space of lattices}.
\newblock In {\em Proceedings of the International Congress of Mathematicians
  (ICM 2014), Seoul, Korea, August 13--21, 2014. Vol. III: Invited lectures},
  pages 11--25. Seoul: KM Kyung Moon Sa, 2014.

\bibitem{bq1}
Y.~Benoist and J.-F. Quint.
\newblock Mesures stationnaires et ferm\'{e}s invariants des espaces
  homog\`enes.
\newblock {\em Ann. of Math. (2)}, 174(2):1111--1162, 2011.

\bibitem{bq_intro}
Y.~{Benoist} and J.-F. {Quint}.
\newblock {Introduction to random walks on homogeneous spaces}.
\newblock {\em {Jpn. J. Math. (3)}}, 7(2):135--166, 2012.

\bibitem{bq_fv}
Y.~{Benoist} and J.-F. {Quint}.
\newblock {Random walks on finite volume homogeneous spaces}.
\newblock {\em {Invent. Math.}}, 187(1):37--59, 2012.

\bibitem{bq2}
Y.~Benoist and J.-F. Quint.
\newblock Stationary measures and invariant subsets of homogeneous spaces
  ({II}).
\newblock {\em J. Amer. Math. Soc.}, 26(3):659--734, 2013.

\bibitem{bq3}
Y.~Benoist and J.-F. Quint.
\newblock Stationary measures and invariant subsets of homogeneous spaces
  ({III}).
\newblock {\em Ann. of Math. (2)}, 178(3):1017--1059, 2013.

\bibitem{bq_sadic}
Y.~{Benoist} and J.-F. {Quint}.
\newblock {Lattices in $S$-adic Lie groups}.
\newblock {\em {J. Lie Theory}}, 24(1):179--197, 2014.

\bibitem{BQRW}
Y.~Benoist and J.~F. Quint.
\newblock {\em Random Walks on Reductive groups}.
\newblock Springer International Publishing, 2016.

\bibitem{Brei}
L.~Breiman.
\newblock The strong law of large numbers for a class of markov chains.
\newblock {\em Ann. Math. Statist.}, 31(801-803), 1960.

\bibitem{breuillard}
E.~{Breuillard}.
\newblock {Local limit theorems and equidistribution of random walks on the
  Heisenberg group}.
\newblock {\em {Geom. Funct. Anal.}}, 15(1):35--82, 2005.

\bibitem{Dan}
S.~Dani.
\newblock On orbits of unipotent flows on homogeneous spaces, {II}.
\newblock {\em Ergodic Theory and Dynamical Systems}, 6(2):167--182, 1986.

\bibitem{el_rw}
A.~Eskin and E.~Lindenstrauss.
\newblock Random walks on locally homogeneous spaces.
\newblock {\em preprint available at
  \url{https://www.math.uchicago.edu/~eskin/RandomWalks/paper.pdf}}.

\bibitem{EskMar04}
A.~Eskin and G.~Margulis.
\newblock Recurrence properties of random walks on homogeneous manifolds.
\newblock {\em Random Walks and geometry}, pages 431--444, 2004.

\bibitem{EskMarMoz98}
A.~Eskin, G.~Margulis, and S.~Mozes.
\newblock {U}pper {B}ounds and {A}symptotics in a {Q}uantitative {V}ersion of
  the {O}ppenheim {C}onjecture.
\newblock {\em Ann. of Math.}, 147(1):93--141, 1998.

\bibitem{Fost53}
F.~G. Foster.
\newblock On the stochastic matrices associated with certain queuing processes.
\newblock {\em The Annals of Mathematical Statistics}, 24(3):355--360, 1953.

\bibitem{gr}
H.~{Garland} and M.~S. {Raghunathan}.
\newblock {Fundamental domains for lattices in (R-)rank 1 semisimple Lie
  groups}.
\newblock {\em {Ann. of Math. (2)}}, 92:279--326, 1970.

\bibitem{margulis_discretesubgroups}
G.~A. {Margulis}.
\newblock {\em {Discrete Subgroups of Semisimple Lie Groups}}, volume~17.
\newblock Springer-Verlag, 1991.

\bibitem{MeyTweed12}
S.~P. Meyn and R.~L. Tweedie.
\newblock {\em Markov {C}hains and {S}tochastic {S}tability}.
\newblock Springer Science \& Business Media, 2012.

\bibitem{Rat}
M.~Ratner.
\newblock Distribution rigidity for unipotent actions on homogeneous spaces.
\newblock {\em Bulletin of the American Mathematical Society}, 24(2):321--325,
  1991.

\bibitem{Rev84}
D.~Revuz.
\newblock {\em {M}arkov {C}hains}.
\newblock Elsevier, 1984.

\bibitem{Tweed83}
R.~Tweedie.
\newblock Criteria for rates of convergence of {M}arkov chains, with
  application to queueing and storage theory.
\newblock In {\em Probability, statistics and analysis}, volume~79, pages
  275--285. Cambridge University Press Cambridge, 1983.

\end{thebibliography}
\end{document}